\numberwithin{equation}{section}
\newtheorem{definition}{Definition}[section]
\newtheorem{lemma}[definition]{Lemma}
\newtheorem{theorem}[definition]{Theorem}
\newtheorem{corollary}[definition]{Corollary}
\newtheorem{conjecture}[definition]{Conjecture}
\newtheorem{proposition}[definition]{Proposition}
\newtheorem{fact}[definition]{Fact}
\newtheorem{em-example}[definition]{Example}
\newtheorem{em-def}[definition]{Definition}        
\newtheorem{em-remark}[definition]{Remark}         
\newtheorem{em-question}[definition]{Question}
\newtheorem{question}[definition]{Question}
\newtheorem{problem}[definition]{Problem}
\newenvironment{example}{\begin{em-example} \em }{ \end{em-example}}
\newenvironment{remark}{\begin{em-remark} \em }{\end{em-remark}}
\newcommand{\R}{\mathbb R}
\newcommand{\N}{\mathbb N}
\newcommand{\C}{\mathfrak C}
\newcommand{\Q}{\mathbb Q}
\newcommand{\Z}{\mathbb Z}
\newcommand{\T}{\mathcal{T}}
\renewcommand{\hom}{\mathrm{\hom}}
\def\Hom{\mathrm{Hom}}
\def\ent{\mathrm{ent}}
\def\End{\mathrm{End}}
\def\Aut{\mathrm{Aut}}
\def\Im{\mathrm{Im}}
\global\def\lmod#1{#1\text{-}\mathrm{Mod}}
\def\Fin{\mathrm{Fin}}
\def\lFin{\mathrm{lFin}}
\def\F{\mathcal F}
\renewcommand\ker{\mathrm{Ker}}
\def\Mat{\mathrm{Mat}}
\def\A{\mathfrak A}
\def\D{\mathfrak D}
\def\F{\mathcal{F}}
\renewcommand\hom{\mathrm{Hom}}
\def\Gdim{\mathrm{G.dim}}
\def\A{\mathcal A}
\def\link{\rightsquigarrow}
\def\2link{\stackrel{{}_2}{\link}}
\def\rk{\mathrm{rk}}
\def\ent{\mathrm{ent}}
\def\End{\mathrm{End}}
\def\Aut{\mathrm{Aut}}
\def\Im{\mathrm{Im}}
\def\F{\mathcal F}
\def\L{\mathcal L}
\def\RG {R\asterisk G}
\def\loc{{\bf L}}
\def\Q{{\bf Q}}
\def\S{{\bf S}}
\def\tor{{\bf T}}
\def\K{\mathbb K }
\def\In{In}
\def\s{\mathfrak s}
\def\Out{Out}
\title{Algebraic entropy of amenable group actions}
\author{Simone Virili\footnote{The author was partially supported by DGI MINECO
MTM2011-28992-C02-01, MINECO MTM2014-53644-P, by the
Comissionat per Universitats i
Recerca de la Generalitat de Catalunya and by the Fondazione Cassa di Risparmio di Padova e Rovigo (Progetto di Eccellenza ``Algebraic structures and their applications'')}}
\begin{document}

\maketitle
\abstract{Let $R$ be a ring, let $G$ be an amenable group and let $\RG$ be a crossed product. The goal of this paper is to construct, starting with a suitable additive function $L$ on the category of left modules over $R$, an additive function on a subcategory of the category of left modules over $\RG$, which coincides with the whole category  if $L({}_RR) <\infty$. This construction can be performed using a dynamical invariant associated with the original  function $L$, called algebraic $L$-entropy. We apply our results to two classical problems on group rings: the Stable Finiteness and the Zero-Divisors Conjectures.}

\smallskip\noindent
--------------------------------

\noindent{\bf 2010 Mathematics Subject Classification.}  16S35, 43A07, 16D10, 18E35.  
 
\noindent{\bf Key words and phrases.} Length functions, Gabriel dimension, algebraic entropy, amenable groups, zero divisors, stable finiteness.

\tableofcontents

\section{Introduction}

Let $V$ be a finite dimensional vector space over a given field $\K$, and consider a surjective endomorphism $\phi\colon V\to V$. Then $\phi$ has to be injective, and one way to show this is to compute dimensions:
$$ \dim_{\K}(V)=\dim_{\K}(\phi(V))+\dim_{\K}(\ker(\phi))=\dim_{\K}(V)+\dim_{\K}(\ker(\phi))\ \ \ \text{so that} \ \ \ \dim_{\K}(\ker(\phi))=0\,,$$
hence $\ker(\phi)=0$. A left module $M$ over a ring $R$ is said to be {\em Hopfian} if any of its surjective endomorphisms is an automorphism.
Thus, the above argument shows that finite dimensional vector spaces are Hopfian modules. Repeating these computations, but with composition length $\ell(-)$ of modules used instead of dimension, one can show that modules of finite length over any ring are Hopfian. 

\medskip
Let us try to abstract the above idea: fix a ring $R$ and consider a function $L\colon \lmod R\to \R_{\geq0}\cup\{\infty\}$, 
that associates to any left $R$-module $M$ the value $L(M)\in \R_{\geq0}\cup\{\infty\}$. We say that $L$ is 
\begin{enumerate}[\rm --]
\item {\em additive} if, given a short exact sequence 
 $0\to A\to B\to C\to 0$ in $\lmod R$,  $L(B)=L(A)+L(C)$;
\item  {\em faithful} provided $L(M)=0$ if and only if $M=0$. 
\end{enumerate}
If we have a faithful additive function on $\lmod R$, then we can conclude in the exact same way that all the modules in the class  $\Fin(L):=\{M\in\lmod R:L(M)<\infty\}$
are Hopfian.

\medskip
A rich source of Hopfian modules is the class of Noetherian modules. The usual proof that a Noetherian module $M$ is Hopfian is as follows:  any  surjective endomorphism $\phi\colon M\to M$ induces isomorphisms
$$\ker(\phi)\overset{\cong}{\leftarrow} \ker(\phi^2)/\ker(\phi)\overset{\cong}{\leftarrow} \ker(\phi^3)/\ker(\phi^2)\overset{\cong}{\leftarrow} \ldots$$
By the ascending chain condition, the chain $\ker(\phi)\subseteq \ker(\phi^2)\subseteq \ker(\phi^3)\subseteq \ldots$ stabilizes, so there exists $n\in\N$ such that $0\cong \ker(\phi^{n+1})/\ker(\phi^n)\cong \ker(\phi)$.  There is another way to show that Noetherian modules are Hopfian: using additive functions. In fact, in this generality, it is not possible to use just one faithful additive function, but we need to introduce an entire family of (not always faithful) additive functions that, when used together, allow us to conclude: 

\smallskip
The {first step} is to notice that, given a hereditary torsion class $\T\subseteq \lmod R$, there is an additive function 
$$\ell_\T\colon \lmod R\to \R_{\geq0}\cup\{\infty\}\,,\ \ \text{ defined as }\ \ \ \ell_{\T}(M)=\sup_{\sigma}\ell_\T(\sigma)\,,$$ where $\sigma$ ranges in all the finite sequences of the form $\sigma: 0=M_0\subseteq M_1\subseteq \ldots \subseteq M_n=M$ and $\ell_\T(\sigma):=\#\{i\in\{1,\dots,n\}:M_{i}/M_{i-1}\notin \T\}$. In other words, $\ell_\T(M)$ is the composition length of  $M$ in the Gabriel quotient $\lmod R/\T$ (see Section \ref{LF}). Notice that $\ell_\T(M)=0$ if and only if $M\in \T$.

Now that we have a procedure that takes as an input a hereditary torsion class and produces an additive function, we need a source for torsion classes. Thus, our {second step} is to recall that the {\em Gabriel filtration} of $\lmod R$ is a transfinite sequence of hereditary torsion classes $\C_\alpha\subseteq \lmod R$ such that:
\begin{enumerate}[\rm --]
\item $\C_0$ is the class containing just the $0$-modules;
\item if $\C_{\alpha}$ is given, $\C_{\alpha+1}$ is the smallest hereditary torsion class containing $\C_{\alpha}$ and all the modules $M$ such that, given $N\leq M$, exactly one between $N$ and $M/N$ is in $\C_\alpha$ (these are those $M$ that become simple in the Gabriel quotient $\lmod R/\C_\alpha$);
\item if $\lambda$ is a limit ordinal,  $\C_\lambda$ is the smallest hereditary torsion class containing all the $\C_\alpha$, for $\alpha<\lambda$.
\end{enumerate}
For any ordinal $\alpha$, we denote by $\ell_\alpha\colon \lmod R\to \R_{\geq0}\cup\{\infty\}$ the additive function induced by $\C_\alpha$ with the above procedure.

At this point, we can conclude with few observations: given a Noetherian left $R$-module $M$ and a surjective endomorphism $\phi\colon M\to M$ suppose, looking for a contradiction, that $\ker(\phi)\neq 0$. Using that $\ker(\phi)$ is Noetherian, we can find an ordinal $\alpha$ such that $\ker(\phi)\in\C_{\alpha+1}\setminus\C_\alpha$. Considering the $\alpha+1$-torsion part of $M$, $T_{\alpha+1}M:=\{x\in M:Rx\in \C_{\alpha+1}\}$, we obtain the following short exact sequence: 
$$0\to \ker(\phi)\to T_{\alpha+1}M\to T_{\alpha+1}M\to 0\,.$$
Furthermore, $\ell_{\alpha}(\ker(\phi))\leq \ell_{\alpha}(T_{\alpha+1}M)<\infty$ (where the finiteness of these quantities is a consequence of Noetherianity, see Lemma \ref{pre1}. A simple way to see this is that any object in the quotient category $\C_{\alpha+1}/\C_\alpha$ is semi-Artinian and objects that are both Noetherian and semi-Artinian do have finite length). Thus, by the additivity of $\ell_\alpha$, $\ell_\alpha(\ker(\phi))=0$, that is, $\ker(\alpha)\in \C_{\alpha}$, which is a contradiction. Thus $\ker(\phi)=0$ and $\phi$ is an automorphism as desired.

\medskip
To summarize, the above strategy consists in: finding a suitable filtration of $\lmod R$ by hereditary torsion classes, associate to any of these torsion classes an additive function, and finally select a suitable length function in our family to prove the injectivity of the given map. One of the main observation of this paper is that this same strategy can be adapted to find a partial solution of the following famous conjecture:

\medskip\noindent
{\bf Stable Finiteness Conjecture} {\rm (Kaplansky, 1972)}{\bf .} 
{\em Let $\K$ be a field and let $G$ be a group. Then, the group ring $\K[G]$ is stably finite. }
\bigskip

The above conjecture was verified by Kaplansky \cite{Kap} in case $\K=\mathbb C$, and similar arguments allow to solve the conjecture in the positive for any field of zero characteristic. Furthermore, it was proved by Ara, O'Meara and Perera \cite{Ara} that $\K[G]$ is stably finite whenever $G$ is residually amenable and $\K$ is an arbitrary division ring. Soon after, this last result was extended by Elek and Szab\'o \cite{Elek}, who proved that the  group ring $\K[G]$ is stably finite for $\K$ any division ring and $G$ a sofic group. Let us also remark that Ara et al. proved that any crossed product $\K\asterisk G$ (not just the group ring $\K[G]$, see Section \ref{crossed_subs}) is stably finite, provided $\K$ is a division ring and $G$ is an amenable group (see Section \ref{am_groups}).

\medskip
The relation between the above conjecture and Hopficity is clear: the stable finiteness of the group ring $\K[G]$ means exactly that all the free $\K[G]$-modules of finite rank $\K[G]^n$ are Hopfian. In this paper we extend, and provide a new proof for, the known positive solution of the above conjecture in  case  $G$ is an amenable group. Our main result in this direction will be the following:

\medskip\noindent
{\bf Theorem A.} {\em 
Let $R$ be a left Noetherian ring, let $G$ be a finitely generated amenable group and let $\RG$ be a fixed crossed product. Then, for any finitely generated left $R$-module $_RK\in\lmod R$, any $\RG$-submodule of $\RG\otimes_RK$ is Hopfian.\\ In particular, $\End_{\RG}(M)$ is stably finite for any submodule ${}_{\RG}M\leq \RG\otimes_RK$.
}
\bigskip 
 
Notice that the above theorem is an extremely strong version of the Stable Finiteness Conjecture that cannot be extended to sofic groups. In fact, the above statement fails for free non-Abelian groups (see Example \ref{libero} and Problem \ref{problem}). On the other hand, recently Li and Liang \cite{Li:2015aa} were able to adapt our strategy to show that $R[G]$ is stably finite whenever $R$ is left Noetherian and $G$ is sofic. It is still an open question if the same is true for more general crossed products.
 
 \medskip
Let us briefly describe our strategy to prove Theorem A. The idea is very close to the one we sketched above in proving the Hopficity of Noetherian modules using additive functions. Fix a crossed product $\RG$, with $G$ finitely generated amenable and $R$ Noetherian. With a little abuse of notation, for any ordinal $\alpha$ we denote by $\C_\alpha$ the hereditary torsion class of those $\RG$-modules ${}_{\RG}M$ whose underlying $R$-module ${}_RM$ is in $\C_\alpha$ (the $\alpha$-th layer of the Gabriel filtration of $\lmod R$, as described above). Now we need suitable additive functions of $\lmod {\RG}$, one for each $\alpha$. To do this, we need to ``extend" the additive functions $\ell_\alpha\colon \lmod R\to \R_{\geq0}\cup\{\infty\}$ from $\lmod R$ to $\lmod {\RG}$. This problem of extension of a given additive function is an interesting matter of dynamical nature, that deserves a brief discussion in its own (see Theorem B below). For the moment,  it is enough for us to know that this is the point where we use the amenability of $G$ to produce, for any ordinal $\alpha$, an additive function 
$L_\alpha\colon \lmod {\RG}\to \R_{\geq0}\cup\{\infty\}$
such that:
\begin{enumerate}[\rm --]
\item ``$L_\alpha$ extends $\ell_\alpha$", that is, $L_\alpha(\RG\otimes_{R}K)=\ell_\alpha(K)$ for any left $R$-module $K$ such that $\ell_\alpha(K)<\infty$;
\item ``$L_\alpha$ is as faithful as it can be", that is, $L_\alpha({}_{\RG}N)=0$ if and only if $\ell_\alpha({}_RN)=0$, for any $\RG$-submodule $N\leq \RG\otimes_{R}K$, with $K$ as above.
\end{enumerate}
Now it is easy to conclude: let $K$ be a finitely generated left $R$-module, consider an $\RG$-submodule $M\leq \RG\otimes_RK$, and let $\phi\colon M\to M$ be a surjective endomorphism. Suppose, looking for a contradiction, that $\ker(\phi)\neq 0$. Using the Noetherianity of $K$, it is possible to find an ordinal $\alpha$ such that ${}_R\ker(\phi)\in\C_{\alpha+1}\setminus \C_{\alpha} $. Then, there is a short exact sequence of $\RG$-modules
$$0\to \ker(\phi)\to T_{\alpha+1}M\to T_{\alpha+1}M\to 0\,.$$
Furthermore, since $T_{\alpha+1}(\RG\otimes_{R}K)\cong RG\otimes_{R}T_{\alpha+1}K$,
$$L_{\alpha}(\ker(\phi))\leq L_\alpha(T_{\alpha+1}M)\leq L_\alpha(\RG\otimes_{R}T_{\alpha+1}K)=\ell_\alpha(T_{\alpha+1}K)<\infty$$
where the equality uses the first property of $L_\alpha$ listed above. By the additivity of $L_\alpha$, we obtain that $L_\alpha(\ker(\phi))=0$ and so, by the second property listed above, $\ell_\alpha({}_R\ker(\phi))=0$, so $\ker(\phi)\in\C_{\alpha}$, which is a contradiction, so $\ker(\phi)=0$ and $\phi$ is an automorphism as desired.

\medskip
As we explained above, in order to apply the machinery of torsion classes and additive functions to Kaplansky's Stable Finiteness Conjecture we need to answer (at least partially), the following question:
\begin{quotation}\noindent
{\em Let $R$ be a ring, let $G$ be an amenable group and fix a crossed product $\RG$. Is it possible to find a map 
\begin{align*}\{\text{additive functions on $\lmod R$}\}&\longrightarrow \{\text{additive functions on $\lmod {\RG}$}\}\\
L&\longmapsto L_{\RG}\end{align*}
such that $L_{\RG}(N)=0$ implies $L(N)=0$ for any $N\leq {\RG}\otimes_RK$, and $L_{\RG}({\RG}\otimes_RK)=L(K)$, for any $K\in\Fin(L)$?}
\end{quotation}
In fact we will restrict our attention to a particular class of additive functions called {\em length functions}, where an additive function $L\colon \lmod R\to \R_{\geq0}\cup\{\infty\}$ is a length function provided 
$$L(M)=\sup\{L(F):\text{$F\leq M$ finitely generated}\}\,.$$
The composition length $\ell$, and more generally all the additive functions of the form $\ell_\alpha$ arising from the Gabriel filtration, are in fact length functions. Let us also remark that, even when $R$ is a field and $L$ is the dimension of vector spaces, Elek \cite{Elek_addition} showed that it is not always possible to extend $L$ to a length function of $\lmod {R[G]}$ if $G$ is not an amenable group, so it is essential to include amenability in our hypotheses. In fact, Elek \cite{Elek_dual} showed that  (again for $R$ a field and $L$ the dimension) this is always possible when $G$ is finitely generated amenable.

\medskip
Given a ring $R$  (no hypothesis is needed on the ring) and a finitely generated amenable group $G$, fix a crossed product $\RG$ and a length function $L\colon \lmod R\to \R_{\geq 0}\cup\{\infty\}$ which is compatible with $\RG$ (see Definition \ref{crossed_comp}). In particular, all the functions $\ell_\alpha$ arising from some layer in the Gabriel filtration satisfy these hypotheses. \\
Given a left $\RG$-module ${}_{\RG}M$, let $\Fin_L(M):=\{{}_RK\leq M:L(K)<\infty\}$. If $\Fin_L(M)$ contains all the finitely generated $R$-submodules of $M$ we say that $M$ is {\em locally $L$-finite} and we denote by $\lFin_L(\RG)$ the class of such modules. For example, $\lFin_L(\RG)$ contains all the left $\RG$-modules ${}_{\RG}M$ such that $L(M)<\infty$. Furthermore, given a left $R$-module $_RK$ such that $L(K)<\infty$, then the left $\RG$-module $\RG\otimes_RK$ is in $\lFin_L(\RG)$. In Section \ref{amenable_ent} we define the {\em algebraic $L$-entropy} as a function
$$\ent_L\colon\lFin_L(\RG) \to \R_{\geq 0}\cup\{\infty\}\,.$$ 
The main result of Section \ref{amenable_ent} is the following theorem, showing that the algebraic entropy provides a partial answer to above extension problem:

\medskip\noindent
{\bf Theorem B.}
{\em In the above hypotheses and notation, the invariant $\ent_L:\lFin_L(\RG)\to \R_{\geq 0}\cup\{\infty\}$ satisfies the following properties:
\begin{enumerate}[\rm (1)]
\item $\ent_L$ is a length function;
\item $\ent_L(\RG\otimes_{R}K)=L(K)$ for any $L$-finite left $R$-module $K$;
\item $\ent_L({}_{\RG}N)=0$ if and only if $L({}_RN)=0$, for any $\RG$-submodule $N\leq \RG\otimes_{R}K$.
\end{enumerate} }

\medskip
The proof of the above theorem will take almost all Section \ref{amenable_ent}, in particular, part (1) of the theorem will be proved in Sections \ref{upp_cont_subsec} and \ref{AT_subsec}, while parts (2) and (3) in Section \ref{shift_subsec}.\\

The final Section \ref{Kap_and_rel} is devoted to two applications of the algebraic entropy: the already mentioned Stable Finiteness Conjecture, and the Zero Divisors Conjecture, which states that, for a torsion-free group $G$ and a field $\K$, the group ring $\K[G]$ is a domain. Using the algebraic entropy we will give a new proof of the fact, known for group rings, that, provided $G$ is amenable, a given crossed product $\K\ast G$ is a domain if and only if it is an Ore domain. Furthermore, using an approach similar to that of Chung and Thom \cite{CT}, we will show that the Zero Divisors Conjecture is equivalent to a statement about the possible range of the algebraic entropy, extending their results from finite fields to arbitrary skew fields, and from group rings to crossed products (see Theorem \ref{dom=oredom}).

\bigskip\noindent
{\bf Acknowledgements.} It is a pleasure for me to thank Peter V\'amos for giving me a copy of his Ph.D. thesis and for useful discussions started in Padova in 2010. I am also sincerely grateful to my Ph.D. advisor Dolors Herbera for her encouragement and trust: she gave me time, freedom and several suggestions that helped me to work independently on this project. Finally, I would like to thank Pere Ara, Ferran Ced\'o, Hanfeng Li, Bingbing Liang and Nhan-Phu Chung for some useful discussions on preliminary versions of this paper.

\section{Length functions and torsion theories}\label{LF}

All along this section, unless otherwise stated, $\C$ denotes a Grothendieck category, that is, a cocomplete Abelian category, with exact directed colimits (i.e., $\C$ satisfies Grothendieck's axiom (Ab.5)) and with a generator. For unexplained terms and notations we refer to \cite{Ste}.

\medskip
Remember that any Grothendieck category is  well-powered (see \cite[Proposition 6, p.94]{Ste}) and that the set of all subobjects of a given object $M$ is a bounded and bicomplete lattice, which we denote by $\L(M)$; the minimum of $\L(M)$ is the $0$-object and the maximum is $M$ itself. For every family $\{N_i:i\in I\}$ of subobjects of $M$ we denote by $\sum_IN_i$ (resp., $\bigcap_{I}N_i$) the join (resp., meet) of the $N_i$ in $\L(M)$. If $\{N_i:i\in I\}$ is a directed system of subobjects, $\sum_IN_i$ is called the  {\em direct union} of the $N_i$ and it is sometimes denoted also by $\bigcup_IN_i$.
With this notation we can state the equivalent form of Grothendieck's axiom Ab.5 stating that, given an object $M$ in $\C$, a subobject $K$ of $M$ and a directed system  $\{N_i: i\in I\}$ of subobjects of $M$,  
$$\left(\sum_I N_{i}\right)\cap K=\sum_I(N_{i}\cap K)\, .$$
Let us recall the following fundamental properties of Grothendieck categories:
\begin{fact}{\rm \cite[Corollaries 4.3 and 4.4, pp.222--223]{Ste}}
Let $\C$ be a Grothendieck category, then
\begin{enumerate}[\rm (1)]
\item $\C$ is complete;
\item $\C$ has enough injective objects.
\end{enumerate}
\end{fact}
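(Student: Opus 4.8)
The plan is to prove the two assertions separately, since each is classical (due in essence to Grothendieck) but requires different tools.

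For (1) I would first reduce to the existence of arbitrary small products: $\C$ being abelian already has kernels and cokernels, hence all equalizers, and products together with equalizers yield all small limits. To produce products I would invoke the dual form of the Special Adjoint Functor Theorem. Fix a set $I$; the diagonal functor $\Delta\colon\C\to\C^{I}$ preserves all colimits, $\C$ is cocomplete by Ab.3, it is locally small, it admits the one-element generating set $\{U\}$ (where $U$ is the given generator), and it is co-well-powered --- because in an abelian category the quotient objects of any $M$ correspond bijectively, via their kernels, to the subobjects of $M$, and $\C$ is well-powered, as recalled above. The theorem then furnishes a right adjoint to $\Delta$, namely the product functor $\prod_{I}$, so $\C$ is complete. (Alternatively one can construct $\prod_{i\in I}M_{i}$ explicitly inside $\C$ as in \cite{Ste}; the adjoint-functor argument is merely the shortest.)

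For (2) the key ingredients are a Baer-type criterion for injectivity and a transfinite ``injective completion'' process; note that this construction uses only Ab.3 and Ab.5, not the completeness just obtained. I would first establish the \emph{Baer criterion}: an object $Q$ is injective provided every morphism $A\to Q$ from a subobject $A\leq U$ of the generator extends along the inclusion $A\hookrightarrow U$. The nontrivial direction is a Zorn's lemma argument: given $X\leq Y$ and $f\colon X\to Q$, a maximal partial extension $f'\colon X'\to Q$ with $X\leq X'\leq Y$ must have $X'=Y$, for otherwise some $u\colon U\to Y$ does not factor through $X'$, the hypothesis lets one extend $f'$ over $X'+\Im u$ (using that pushouts of monomorphisms are monomorphisms), contradicting maximality. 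Next, for an arbitrary object $M$, I would build a continuous ascending chain $M=M_{0}\leq M_{1}\leq\cdots\leq M_{\alpha}\leq\cdots$: at a successor stage let $M_{\alpha+1}$ be the pushout of $\bigoplus_{(A,g)}A\hookrightarrow\bigoplus_{(A,g)}U$ along the map $\bigoplus_{(A,g)}A\to M_{\alpha}$ that is $g$ on the summand indexed by $(A,g)$, where $(A,g)$ runs over all pairs with $A\leq U$ a subobject and $g\colon A\to M_{\alpha}$ a morphism --- a \emph{set} of pairs, since $\C$ is well-powered and locally small --- so that $M_{\alpha}\hookrightarrow M_{\alpha+1}$ and every $g\colon A\to M_{\alpha}$ with $A\leq U$ acquires an extension $U\to M_{\alpha+1}$; at a limit stage $\lambda$ take $M_{\lambda}=\bigcup_{\alpha<\lambda}M_{\alpha}$, the transition maps remaining monic by Ab.5. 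Finally, fixing a regular cardinal $\kappa$ so large that every morphism from a subobject of $U$ into the directed union of a $\kappa$-indexed chain of subobjects factors through some member of that chain (such $\kappa$ exists by a cardinality count based on the generator), I would put $Q:=M_{\kappa}$. Then any $g\colon A\to Q$ with $A\leq U$ factors through some $M_{\alpha}$, $\alpha<\kappa$, hence extends to $U\to M_{\alpha+1}\leq Q$; by the Baer criterion $Q$ is injective, and the inclusion $M=M_{0}\hookrightarrow M_{\kappa}=Q$ embeds $M$ into an injective. Thus $\C$ has enough injective objects.

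The main obstacle is the stabilization step in (2): one must control, uniformly, the ``size'' of the subobjects of the generator so that the transfinite chain $(M_{\alpha})$ is already injective at stage $\kappa$. This is exactly where Ab.5 --- the exactness of directed colimits, equivalently the identity $(\sum_{I}N_{i})\cap K=\sum_{I}(N_{i}\cap K)$ recalled above, which guarantees that a morphism from a suitably small object into a directed union of subobjects factors through a member --- is indispensable, and it is why the whole construction fails for a mere Ab.3 category with a generator. By contrast, in (1) the only non-formal point is co-well-poweredness, which is immediate in the abelian setting.
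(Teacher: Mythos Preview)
The paper does not provide its own proof of this statement; it is recorded as a \emph{Fact} with a bare citation to Stenstr\"om, so there is no in-paper argument to compare your proposal against. Your argument is correct and follows the classical line: the dual Special Adjoint Functor Theorem for (1) (with co-well-poweredness obtained from well-poweredness via the kernel/quotient bijection in an abelian category), and for (2) a Baer criterion relative to the generator combined with Grothendieck's transfinite small-object construction. Your identification of Ab.5 as the crux of the stabilization step is accurate: taking $\kappa$ regular and strictly larger than the cardinality of the subobject lattice of $U$, the identity $\bigl(\bigcup_{\alpha}M_{\alpha}\bigr)\cap K=\bigcup_{\alpha}(M_{\alpha}\cap K)$ forces any morphism from a subobject of $U$ into $M_{\kappa}=\bigcup_{\alpha<\kappa}M_{\alpha}$ to factor through some $M_{\alpha}$, so $M_{\kappa}$ verifies the Baer criterion and is injective.
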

Given a family $\{M_i:i\in I\}$ of objects of $\C$, we denote by $\prod_I M_i$ and $\bigoplus_I M_i$ the product and the coproduct respectively. 
Given an object $M$ of $\C$, we denote by $E(M)$ the injective envelope of $M$. Given an endomorphism $\phi\colon M\to M$, a subobject $N$ of $M$ is {\em $\phi$-invariant} if $\phi(N)\leq N$.
Any subobject of a quotient (or, equivalently, a quotient of a subobject) of an object $M$ is generically called a {\em segment} of $M$.
A {\em series} of $M$ is a finite chain of subobjects 
\begin{equation}\label{filt}\sigma:\ \ 0=N_0\leq N_1\leq \dots\leq N_n=M\, .\end{equation}
The {\em factors} of $\sigma$ are the segments of $M$ of the form $N_{i+1}/N_i$ for some $i<n$. Two series of $M$ are {\em equivalent} if they have the isomorphic factors up to reordering. If all the factors of $\sigma$ are simple objects, then we say that $\sigma$ is a {\em composition series} for $M$. If $\sigma'\ \ 0=N'_0\leq N'_1\leq \dots\leq N'_m=M$ is another series of $M$, we say that $\sigma'$ is a refinement of $\sigma$ if $\{N_0,\dots,N_n\}\subseteq \{N_0',\dots,N_m'\}$.
The following fact is well-known.
\begin{fact}[Artin-Schreier's Refinement Theorem]\label{A-S}
Let $\C$ be a Grothendieck category and let $M$ be an object of $\C$. If $\sigma_1$ and $\sigma_2$ are two series of $M$, then they admit equivalent refinements.
\end{fact}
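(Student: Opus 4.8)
The statement asks for much less than the Jordan--Hölder-style equivalence: we only need to produce a \emph{single} series $\sigma$ of $M$ which is simultaneously a refinement of $\sigma_1$ and of $\sigma_2$, with no control on the factors. The plan is to take $\sigma$ to be, roughly, the ``common refinement'' obtained by interleaving the two chains. Write $\sigma_1: 0 = N_0 \leq N_1 \leq \dots \leq N_n = M$ and $\sigma_2: 0 = N'_0 \leq N'_1 \leq \dots \leq N'_m = M$. For each $i \in \{0, \dots, n\}$ and $j \in \{0, \dots, m\}$ form the subobject $N_i + N'_j \in \L(M)$ (using that $\L(M)$ is a lattice). The idea is to walk through the ``staircase'' from $(0,0)$ to $(n,m)$ in the grid of pairs $(i,j)$, at each step increasing either $i$ or $j$ by one, thereby obtaining a chain of subobjects of the form $N_i + N'_j$ that starts at $N_0 + N'_0 = 0$ and ends at $N_n + N'_m = M$.

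Concretely, I would define $\sigma$ to consist of the subobjects
\[
0 = N_0 \leq N_1 \leq \dots \leq N_n = N_n + N'_0 \leq N_n + N'_1 \leq \dots \leq N_n + N'_m = M.
\]
That is: first run through $\sigma_1$ in full, and then, having reached $N_n = M$, the terms $N_n + N'_j$ are all equal to $M$ and the ``second half'' is constant. This already shows the trivial route is degenerate, so instead one takes the genuinely interleaved staircase. The cleanest choice: let $\sigma$ have terms $\{\,N_i + N'_j : (i,j) \text{ on a fixed monotone lattice path from } (0,0) \text{ to } (n,m)\,\}$. Along such a path consecutive pairs differ by incrementing one coordinate, so consecutive terms differ by passing from $N_i + N'_j$ to $N_{i+1} + N'_j$ (or to $N_i + N'_{j+1}$); in either case the smaller is contained in the larger since $N_i \leq N_{i+1}$ (resp.\ $N'_j \leq N'_{j+1}$). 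Hence $\sigma$ is a genuine series: a finite ascending chain from $0$ to $M$. The only subtlety is that along the path one may hit repeated values (when $N_{i+1} + N'_j = N_i + N'_j$); one simply deletes repetitions, which does not affect the property of being a series.

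It remains to check that $\sigma$ refines both $\sigma_1$ and $\sigma_2$, in the sense of the definition given before the statement (the index set of $\sigma_1$ is contained, as a set of subobjects, in that of $\sigma$, and likewise for $\sigma_2$). For this I would choose the path to be the ``L-shaped'' one that goes $(0,0) \to (n,0) \to (n,m)$: the first leg produces exactly the subobjects $N_0 \leq N_1 \leq \dots \leq N_n$ of $\sigma_1$ (since $N'_0 = 0$), so $\{N_0, \dots, N_n\} \subseteq \{\text{terms of }\sigma\}$; but the second leg gives only $N_n + N'_j = M$, so $\sigma$ does \emph{not} visibly contain the terms of $\sigma_2$. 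To fix this, replace the second leg by the chain $N_n \cap$-translates is wrong; instead use the dual staircase and the modular-type identity available in $\L(M)$. The robust fix: define $\sigma$ by concatenating, for $k = 0, 1, \dots$, the \emph{refinement of the segment} $N_{k}/N_{k-1}$ induced by pulling back the chain $\{N'_j : j\}$; i.e.\ insert between $N_{k-1}$ and $N_k$ the subobjects $N_{k-1} + (N_k \cap N'_j)$ for $j = 0, \dots, m$. Each such inserted family is an ascending chain from $N_{k-1}$ to $N_k$ (at $j=0$ it is $N_{k-1}$, at $j=m$ it is $N_k$), so the concatenation over all $k$ is a series refining $\sigma_1$. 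That it also refines $\sigma_2$ follows because $\sum_k \big(N_k \cap N'_j\big)$ telescopes to $N'_j$ using the Ab.5 identity $\big(\sum_I N_i\big)\cap K = \sum_I (N_i \cap K)$ recalled above together with $\sum_k N_k = M$; tracking the partial sums shows $N'_j$ itself appears among the terms of $\sigma$.

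\textbf{Main obstacle.} The only real work is the last bookkeeping step: verifying that the interleaved chain, after deleting repetitions, genuinely \emph{contains each $N'_j$ as a term} (not merely refines $\sigma_2$ up to isomorphism of factors). This is where one must invoke the Ab.5 exactness axiom to identify the relevant partial joins with the $N'_j$; everything else (that $\sigma$ is ascending, finite, runs from $0$ to $M$, and refines $\sigma_1$) is immediate from $\L(M)$ being a lattice. I expect this to be short once the telescoping identity $\sum_{k=1}^{n}\big(N'_j \cap N_k / N'_j \cap N_{k-1}\big)$-style computation is set up correctly, but it is the step that actually uses something about Grothendieck categories rather than pure lattice theory.
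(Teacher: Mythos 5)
The paper offers no proof of Fact \ref{A-S} --- it is quoted as well known --- so I can only assess your argument on its own terms, and it has a genuine gap at the decisive step. Your final construction is the first half of the classical Schreier refinement: between $N_{k-1}$ and $N_k$ you insert the terms $N_{k-1}+(N_k\cap N'_j)$, $j=0,\dots,m$, and this does yield a series refining $\sigma_1$. But the claim that this series also contains each $N'_j$ as a term is false, and the telescoping identity does not establish it: the identity $\sum_k (N_k\cap N'_j)=\bigl(\sum_k N_k\bigr)\cap N'_j=N'_j$ concerns the partial joins $\sum_{k\leq K}(N_k\cap N'_j)=N_K\cap N'_j$ for a \emph{fixed} $j$, and these subobjects are not among the terms $N_{k-1}+(N_k\cap N'_{j'})$ of your series. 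Concretely, take $\C=\lmod K$ for a field $K$, $M=K^2$, $\sigma_1\colon 0\leq K\times 0\leq M$ and $\sigma_2\colon 0\leq 0\times K\leq M$: your refinement of $\sigma_1$ collapses back to $\sigma_1$ itself and does not contain $0\times K$.

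The obstruction is not a repairable piece of bookkeeping. With the paper's literal definition of refinement (the terms of $\sigma_1$ form a subset of the terms of $\sigma$), a common refinement of $\sigma_1$ and $\sigma_2$ would be a single chain containing both $K\times 0$ and $0\times K$, which is impossible since these are incomparable; so the statement you set out to prove is false as literally written. Fact \ref{A-S} must be read in the classical Schreier sense: $\sigma_1$ and $\sigma_2$ admit refinements $\sigma_1'$ and $\sigma_2'$ that are \emph{equivalent}, i.e.\ have the same factors up to a permutation and isomorphism. That weaker conclusion is exactly what the paper uses in Proposition \ref{vamos_ext} and Lemma \ref{Vamos_lin_comb}, where only the isomorphism classes of the factors enter through $L^*$. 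The standard proof takes your series together with its mirror image (inserting $N'_{j-1}+(N'_j\cap N_i)$ into $\sigma_2$) and matches the factors via the Zassenhaus butterfly lemma,
$$\frac{N_{k-1}+(N_k\cap N'_j)}{N_{k-1}+(N_k\cap N'_{j-1})}\;\cong\;\frac{N'_{j-1}+(N'_j\cap N_k)}{N'_{j-1}+(N'_j\cap N_{k-1})}\,,$$
which holds in any abelian category; since the chains are finite, no appeal to Ab.5 is needed anywhere.
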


\subsection{Length functions}
In any category $\C$ it is possible to define (real-valued) invariants to measure various finiteness properties of the objects. In general, we call {\em invariant} of $\C$, any map $i\colon \C\to \R_{\geq 0}\cup\{\infty\}$ such that $i(X)=i(X')$ whenever $X$ and $X'$ are isomorphic objects in $\C$. 

\medskip
If we make some stronger assumption on the structure of the category $\C$, we can refine our definition of invariant in order to obtain a more treatable notion. Indeed, suppose that $\C$ is an Abelian category (or, more generally, an exact category). The information that one usually wants to encode is about homological properties. In this setting it seems natural to ask that, given a short exact sequence
\begin{equation}\label{ses}0\to X_1\to X_2\to X_3\to 0\end{equation}
in $\C$, we have $i(X_2)=i(X_1)+i(X_3)$. In this case, we say that $i$ is {\em additive} on the sequence \eqref{ses}. If $i$ is additive on all the short exact sequences of $\C$ and $i(0)=0$, then we say that $i$ is an {\em additive invariant} (or {\em additive function}). \\
In the following lemma we collect some useful properties of additive functions.

\begin{lemma}\label{basic_add}
Let $\C$ be an Abelian category and let $i\colon \C\to \R_{\geq 0}\cup\{\infty\}$ be an additive function. Then
\begin{enumerate}[\rm (1)]
\item $i(X)\geq i(Y)$ for every segment $Y$ of $X\in \C$;
\item $i(X_1+X_2)+i(X_1\cap X_2)=i(X_1)+i(X_2)$ for every pair of subobjects $X_1,X_2$ of $X\in\C$;
\item $\sum_{j \text{odd}}i(X_j)=\sum_{j \text{even}} i(X_j)$ for every exact sequence $0\to X_1\to X_2\to \dots\to X_n\to 0$ in $\C$.
\end{enumerate}
\end{lemma}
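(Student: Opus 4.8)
The plan is to derive all three items from the defining additivity of $i$ by exhibiting, in each case, an appropriate short exact sequence (or chain of them) in $\C$, while being careful that every computation uses only the monotone, cancellation-free arithmetic of $\R_{\geq 0}\cup\{\infty\}$ (in particular $a+\infty=\infty$), so that infinite values never cause trouble.

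\textbf{Item (1).} A segment $Y$ of $X$ is, by definition, a quotient of a subobject $X'\leq X$. The inclusion yields a short exact sequence $0\to X'\to X\to X/X'\to 0$, so $i(X)=i(X')+i(X/X')\geq i(X')$; the quotient map $X'\twoheadrightarrow Y$ yields $0\to K\to X'\to Y\to 0$, so $i(X')=i(K)+i(Y)\geq i(Y)$. Chaining the two inequalities gives $i(X)\geq i(Y)$, and both inequalities remain valid if some term equals $\infty$.

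\textbf{Item (2).} In an Abelian category the square with corners $X_1\cap X_2$, $X_1$, $X_2$, $X_1+X_2$ is both a pullback and a pushout, which produces the exact sequence
$$0\longrightarrow X_1\cap X_2\longrightarrow X_1\oplus X_2\longrightarrow X_1+X_2\longrightarrow 0\,,$$
where the first map is the difference of the inclusions and the second their sum. Applying $i$ and combining with $i(X_1\oplus X_2)=i(X_1)+i(X_2)$, read off the split sequence $0\to X_1\to X_1\oplus X_2\to X_2\to 0$, gives exactly $i(X_1+X_2)+i(X_1\cap X_2)=i(X_1)+i(X_2)$; since we only ever add, this persists in $\R_{\geq 0}\cup\{\infty\}$.

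\textbf{Item (3).} Given $0\to X_1\xrightarrow{d_1}X_2\xrightarrow{d_2}\cdots\xrightarrow{d_{n-1}}X_n\to 0$, break it through the images: set $Z_0=Z_n=0$ and $Z_j=\Im(d_j)=\ker(d_{j+1})$ for $1\leq j\leq n-1$, so that exactness yields short exact sequences $0\to Z_{j-1}\to X_j\to Z_j\to 0$ for every $1\leq j\leq n$, whence $i(X_j)=i(Z_{j-1})+i(Z_j)$. If $i(Z_k)=\infty$ for some $k$ (necessarily $1\le k\le n-1$), then $i(X_k)=\infty$ since $Z_k$ is a quotient of $X_k$ and $i(X_{k+1})=\infty$ since $Z_k$ is a subobject of $X_{k+1}$, both by item (1); as $k$ and $k+1$ have opposite parity, both sides of the claimed identity are $\infty$. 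Otherwise every $i(Z_k)$, hence every $i(X_j)$, is finite; summing $i(X_j)=i(Z_{j-1})+i(Z_j)$ over odd $j$ and over even $j$ separately, each interior term $i(Z_k)$ with $1\le k\le n-1$ appears exactly once in each sum — it occurs in $i(X_k)$ and in $i(X_{k+1})$, precisely one of which is odd-indexed — while $i(Z_0)=i(Z_n)=0$. Hence $\sum_{j\text{ odd}}i(X_j)=\sum_{k=1}^{n-1}i(Z_k)=\sum_{j\text{ even}}i(X_j)$.

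There is no real obstacle here: the statement is the abstract form of the first, second, and third isomorphism theorems together with basic bookkeeping. The only point worth flagging is the handling of the value $\infty$, which is why I prefer the additive, subtraction-free arguments above (the direct-sum sequence for (2), the image filtration for (3)) over the more common proofs that divide through by a term; and the only external input is the pullback–pushout description of intersections and sums of subobjects in an Abelian category, which I would either quote from \cite{Ste} or derive directly.
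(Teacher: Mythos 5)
Your proof is correct and complete; the paper itself states this lemma without proof, treating it as a routine consequence of additivity, so there is no authorial argument to compare against. Your handling is careful and notably subtraction-free: for (2) the Mayer--Vietoris sequence $0\to X_1\cap X_2\to X_1\oplus X_2\to X_1+X_2\to 0$ avoids the more common (but $\infty$-unsafe) argument via the second isomorphism theorem, and for (3) the explicit check that an infinite $i(Z_k)$ infects both an odd-indexed and an even-indexed $X_j$ is exactly the point one must not gloss over when values live in $\R_{\geq 0}\cup\{\infty\}$; both choices are sound.
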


A natural assumption in the context of Grothendieck categories is the upper continuity of invariants, given an object $X\in \C$ and a directed set $\mathcal S=\{X_\alpha:\alpha\in \Lambda\}$ of subobjects of $X$ such that $\sum_\Lambda X_\alpha=X$, we say that $i$ is {\em continuous} on $\mathcal S$ if
\begin{equation}\label{upper_cont}
i(X)=\sup\{i(X_\alpha):\alpha\in \Lambda\}\, .
\end{equation} 
If $i$ is continuous on all the direct unions as above, we say that $i$ is {\em upper continuous}. Upper continuity can be defined in arbitrary Abelian categories even if it seems more meaningful when direct limits exist and are exact.
\begin{definition}[\rm \cite{NR}]
Let $\C$ be an Abelian category. An additive and upper continuous invariant $i\colon \C \to \R_{\geq 0}\cup\{\infty\}$ is said to be a {\em length function}. 
\end{definition}

In what follows we generally denote  length functions by the symbol $L$.  We remark that the usual definition of length function is given in module categories, which are in particular locally finitely generated Grothendieck categories. In this special setting, the usual definition of upper continuity is different (see part (3) of the following proposition). We now show that we are not defining a new notion of upper continuity but just generalizing this concept to arbitrary Grothendieck categories (similar observations, with analogous proofs, were already present in \cite{V2} for module categories).

\smallskip
Recall that, given an object $M$ of a Grothendieck category $\C$ and an ordinal $\kappa$, a set $\{M_\alpha:\alpha<\kappa\}$ of subobjectsis of $M$ is a {\em continuous chain} if
\begin{enumerate}[\rm (1)]
\item $M_\alpha\leq M_{\beta}$, provided $\alpha,\beta <\kappa$ and $\alpha\leq \beta$;
\item $\bigcup_{\alpha<\lambda}M_\alpha=M_\lambda$ for any limit ordinal $\lambda<\kappa$.
\end{enumerate}

\begin{proposition}\label{lf}
Let $\C$ be a Grothendieck category and $L\colon \C\to \R_{\geq 0}\cup\{\infty\}$ be an additive function. Consider the following statements:
\begin{enumerate}[\rm (1)]
\item $L$ is a length function;
\item given an object $M\in \C$, an ordinal $\kappa$ and a continuous chain $\{M_{\alpha}:\alpha<\kappa \}$ of subobjects of $M$ such that $M=\bigcup_{\alpha<\kappa}M_\alpha$, we have that $L(M)=\sup \{L(M_\alpha):\alpha<\kappa\} ;$
\item for every object $M\in \C$ we have that $L(M)=\sup \{L(F):\text{$F$ finitely generated subobject of $M$}\}.$
\end{enumerate}
Then (1)$\Leftrightarrow$(2) and (2)$\Leftarrow$(3). If $\C$ is locally finitely generated, then the above statements are all equivalent.
\end{proposition}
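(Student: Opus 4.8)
\textit{Plan.} I would prove the chain of implications $(1)\Rightarrow(2)$, $(3)\Rightarrow(2)$, $(2)\Rightarrow(1)$, and, in the locally finitely generated case, $(1)\Rightarrow(3)$; together these give everything claimed. The two easy directions come first. For $(1)\Rightarrow(2)$ it suffices to observe that a continuous chain $\{M_\alpha:\alpha<\kappa\}$ with $M=\bigcup_{\alpha<\kappa}M_\alpha$ is in particular a directed family of subobjects of $M$ with join $M$, so the definition of upper continuity applies verbatim. For $(3)\Rightarrow(2)$: Lemma~\ref{basic_add}(1) gives $\sup_\alpha L(M_\alpha)\le L(M)$, and for the reverse inequality, by $(3)$ I only need $L(F)\le\sup_\alpha L(M_\alpha)$ for each finitely generated subobject $F\le M$; writing $F=F\cap M=\bigcup_{\alpha<\kappa}(F\cap M_\alpha)$ (Ab.5), a directed union of subobjects of $F$, finite generation of $F$ forces $F=F\cap M_\alpha$, i.e.\ $F\le M_\alpha$, for some $\alpha$, whence $L(F)\le L(M_\alpha)$. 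Finally, when $\C$ is locally finitely generated every object $M$ is the join of the directed family of its finitely generated subobjects, so $(1)$ applied to that family is literally the statement $(3)$; combined with $(1)\Leftrightarrow(2)$ this settles the locally finitely generated case.

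The real content is $(2)\Rightarrow(1)$. Assuming $(2)$, I take a directed family $\{X_\alpha:\alpha\in\Lambda\}$ of subobjects of $X$ with $X=\sum_\Lambda X_\alpha$ and set $s=\sup_\alpha L(X_\alpha)$. By Lemma~\ref{basic_add}(1), $s\le L(X)$, and if $s=\infty$ there is nothing to prove, so I assume $s<\infty$ and must show $L(X)\le s$. The plan is a transfinite induction on the cardinal $\kappa=|\Lambda|$, the statement proved at stage $\kappa$ being: for the fixed pair $(\C,L)$, upper continuity holds on every directed family of subobjects (in any object) which is indexed by a set of cardinality $<\kappa$ and whose members have finite supremum of $L$-values. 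The base case is $\Lambda$ finite: then $X=0$ when $\Lambda=\emptyset$, and otherwise a nonempty finite directed poset has a greatest element $\alpha^{*}$, so $X=X_{\alpha^{*}}$ and $L(X)=L(X_{\alpha^{*}})\le s$.

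For the inductive step I would enumerate $\Lambda=\{\alpha_\xi:\xi<\kappa\}$ along the initial ordinal $\kappa$ and form the continuous chain $V_\xi=\sum_{\eta<\xi}X_{\alpha_\eta}$, so that $V_{\xi+1}=V_\xi+X_{\alpha_\xi}$, $V_\lambda=\bigcup_{\eta<\lambda}V_\eta$ at limit stages, and $\bigcup_{\xi<\kappa}V_\xi=X$. By $(2)$ we have $L(X)=\sup_{\xi<\kappa}L(V_\xi)$, so it is enough to prove $L(V_\xi)\le s$ for all $\xi<\kappa$, which I would do by an inner transfinite induction on $\xi$: the case $\xi=0$ is trivial, the limit case follows from $(2)$ applied to the sub-chain $\{V_\eta:\eta<\lambda\}$ together with the inner hypothesis, and for $\xi=\zeta+1$ I would write
$$V_{\zeta+1}=V_\zeta+X_{\alpha_\zeta}=\bigcup_{F}\Big(\sum_{\eta\in F}X_{\alpha_\eta}+X_{\alpha_\zeta}\Big),$$
the union ranging over the directed poset of finite subsets $F\subseteq\zeta$. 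Directedness of $\{X_\alpha\}$ puts each of these subobjects inside some $X_\beta$, hence each has $L$-value $\le s$ by Lemma~\ref{basic_add}(1); if $\zeta$ is finite the displayed family is finite and $L(V_{\zeta+1})\le s$ is immediate, while if $\zeta$ is infinite the family has cardinality $|\zeta|<\kappa$, so the \emph{outer} induction hypothesis gives $L(V_{\zeta+1})\le s$. This closes both inductions and yields $L(X)=\sup_{\xi<\kappa}L(V_\xi)\le s$.

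The only genuinely delicate point is this successor step: a priori $L(V_\zeta+X_{\alpha_\zeta})$ is bounded only by $L(V_\zeta)+L(X_{\alpha_\zeta})\le 2s$, and the device that defeats this is precisely the nesting of the two transfinite inductions — each single step $V_\zeta\to V_{\zeta+1}$ is re-expressed as a directed union of strictly smaller cardinality and handed back to the outer induction hypothesis. Everything else in the argument is a routine use of Ab.5 and of the elementary properties of additive functions recorded in Lemma~\ref{basic_add}.
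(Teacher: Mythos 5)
Your proof is correct, and for most of it---the implications $(1)\Rightarrow(2)$, $(3)\Rightarrow(2)$ (where you helpfully supply the Ab.5/compactness argument that the paper leaves implicit), and $(1)\Rightarrow(3)$ in the locally finitely generated case---the approach matches the paper's. The real difference lies in $(2)\Rightarrow(1)$. The paper disposes of the infinite case by citing a set-theoretic decomposition (G\"obel--Trlifaj, Lemma~1.2.10): any infinite directed poset $I$ is a continuous well-ordered union of \emph{directed} subposets of strictly smaller cardinality, so the cardinality induction hypothesis applies directly to each sub-family and condition~(2) handles the resulting continuous chain. You bypass that structural lemma entirely: you enumerate $\Lambda$ along its initial ordinal, form the continuous chain of partial sums $V_\xi=\sum_{\eta<\xi}X_{\alpha_\eta}$, and deal with the fact that the initial segments $\{X_{\alpha_\eta}:\eta<\xi\}$ need not be directed in $\Lambda$---so the outer hypothesis cannot be applied to them as such---by running a second, inner induction on $\xi$. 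The successor step of that inner induction, where $V_{\zeta+1}=V_\zeta+X_{\alpha_\zeta}$ is rewritten as a directed union indexed by the finite subsets of $\zeta$ (cardinality $|\zeta|<\kappa$) and the outer hypothesis is invoked for that family, is the genuine new move, and it is correct. Both arguments work: the paper's is terser at the price of the cited poset-decomposition fact, yours is longer but fully self-contained. One cosmetic remark: you phrase the outer inductive statement at stage $\kappa$ as concerning families of cardinality $<\kappa$, but then take $|\Lambda|=\kappa$ in the inductive step; the intent (prove the case $|\Lambda|=\kappa$ assuming all smaller cardinalities) is clear and the argument goes through, but the bookkeeping should be tidied to match.
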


In the last part of this subsection we show that, under suitable conditions, to prove the additivity of an invariant it is enough to show that it is additive on short exact sequences of finitely generated objects (see Proposition \ref{add_on_fg}). This will be extremely useful in proving that the algebraic entropy is additive. 

\begin{lemma}
Let $I$ be a set, let $\{a_i:i\in I\}$ and $\{b_i:i\in I\}$ be bounded subsets of $\R_{\geq0}$, and let $a:=\sup_ia_i$ and $b:=\inf_i b_i$. If $a_i\leq a$, $b\leq b_i$ and $a_i+b_i=c$ for all $i\in I$, then $a+b=c$.
\end{lemma}
\begin{proof}
For any $\varepsilon>0$ there exists $i\in I$ such that $|a-a_i|<\varepsilon$ and $|b-b_i|<\varepsilon$. Thus,
\begin{equation*}c-\varepsilon=a_i+(b_i-\varepsilon) <a+b<(a_i+\varepsilon)+b_i=c+\varepsilon\,.\qedhere\end{equation*}
\end{proof}

\begin{proposition}\label{add_on_fg}
Let $R$ be a ring and let $\C\subseteq \lmod R$ be a subclass closed under subobjects and quotients. An upper continuous invariant $L\colon \C\to \R_{\geq0}\cup\{\infty\}$ such that $L(F)<\infty$ for any finitely generated $F\in \C$ is additive if and only if the following properties hold true:
\begin{enumerate}[\rm (1)]
\item $L(K_2)=L(K_1)+L(K_3)$ for any short exact sequence $0\to K_1\to K_2\to K_3\to 0$ of finitely generated modules in $\C$;
\item given a finitely generated $K\in\C$ and a directed system of submodules $\{H_i\}_i$,  $L(K/H)=\inf_i L(K/H_i)$, where $H:=\bigcup_iH_i$.
\end{enumerate}
\end{proposition}
\begin{proof}
Consider a short exact sequence $0\to N\to M\to M/N\to 0$ in $\C$. Then, $L(M)=\sup_{F}L(F)$, with $F$ ranging in the finitely generated submodules of $M$. Furthermore, for any such $F$ we have that $L(N\cap F)=\sup_{G}L(G)$, $L(F/(N\cap F))=\inf_GL(F/G)$ and $L(G)+L(F/G)=L(F)$, for any $G\leq N\cap F$ finitely generated (where for these three equalities we used upper continuity, hypothesis (2) and hypothesis (1), respectively). By the above lemma, this implies that $L(F)=L(N\cap F)+L(F/(N\cap F))$. Thus,
\begin{align*}
L(M)&=\sup_{F\leq M\text{ f.g. }} L(F)\\
&=\sup_{F\leq M\text{ f.g. }} L(N\cap F)+L(F/(N\cap F))\\
&=\sup_{F\leq M\text{ f.g. }}  L(N\cap F)+ \sup_{F\leq M\text{ f.g. }} L((F+N)/N)\\
&=L(N)+L(M/N)
\end{align*}
On the other hand, if $L$ is additive, then (1) is clear. Furthermore, given a finitely generated $K\in\C$ and a directed system of submodules $\{H_i\}_i$, then
\begin{equation*}L(K/H)=L(K)-L(H)=L(K)-\sup_{i}L(H_i)=\inf_i(L(K)-L(H_i))=\inf_iL(K/H_i)\,.\qedhere\end{equation*}
\end{proof}

\subsection{Gabriel localization and Gabriel dimension}\label{gab_cat}

Let $\C$ be a Grothendieck category and let $\A\subseteq \C$ be a subclass. Recall that $\A$ is a {\em semi-closed} if, given a short exact sequence
$$0\to A\to B\to C\to 0\,,$$
$B\in \A$ implies that both $A$ and $C$ belong to $\A$. A {\em Serre class} is a semi-closed class that is closed under extensions. Furthermore, $\A$ is a {\em hereditary torsion class} (or {\em localizing class}) if it is Serre and it is closed under taking arbitrary direct sums. On the other hand,  $\A$ is a {\em hereditary torsion free class} provided it is closed under taking sub-objects, extensions, products and injective envelopes. 

\begin{definition}\label{def_tt}
A {\em hereditary torsion theory} $\tau$ in $\C$ is a pair of classes $(\T, \F)$ such that 
\begin{enumerate}[\rm --]
\item the class $\T$ of {\em $\tau$-torsion} objects is a hereditary torsion class;
\item the class $\F$ of {\em $\tau$-torsion free} objects is a hereditary torsion free class;
\item $(\T)^{\perp}=\F$ and $^{\perp}(\F)=\T$.
\end{enumerate}
In this paper the symbols $\tau$, $\T$ and $\F$ are always used to denote a torsion theory, a torsion class and a torsion free class respectively. Since all the torsion theories in the sequel are hereditary, we just say ``torsion theory'', ``torsion class" and ``torsion free class" to mean respectively ``hereditary torsion theory'', ``hereditary torsion class'' and ``hereditary torsion free class''. 
\end{definition}

Given a torsion theory $\tau=(\T,\F)$, the inclusion $\T\to \C$ has a right adjoint $\tor_\tau\colon \C\to \T$, which is a subfunctor of the identity, called the {\em torsion functor}. For any object $X\in \C$ there is a short exact sequence of the form
$$0\to \tor_\tau X\to X\to X/\tor_\tau X\to 0$$
such that $\tor_\tau X\in \T$ and $X/\tor_\tau X\in \F$. 

\medskip
Let us also recall the close relation between torsion theories and localization of Abelian categories. We start with the following definition:

\begin{definition}
A {\em localization} of $\C$ is a pair of adjoint functors $\Q:\C\rightleftarrows \D :\S$, where $\D$ is an Abelian category, $\S$ is fully faithful,  and $\Q$ is exact and essentially surjective. In this situation, $\D$ is called a {\em quotient category}, $\Q$ is a {\em quotient functor} and $\S$ is a {\em section functor}. The composition $\loc=\S\circ\Q\colon \C\to \C$ is called the {\em localization functor}.
\end{definition}

One can encounter slightly different definitions of localization in other contexts, see for example \cite{Localization}.
We can now explain the connection between localizations and torsion theories. Indeed, starting with a localization $\Q:\C\rightleftarrows \D:\S$ and letting $\loc=\S\circ\Q$, 
$$\ker(\loc)=\{X\in \C : \loc(X)=0\}=\{X\in \C : \Q(X)=0\}=\ker (\Q)$$ 
is a torsion class (use the exactness of $\Q$ and the fact that it is a left adjoint). Hence, the localization $(\Q,\S)$  induces a torsion theory $(\ker(\Q),\ker(\Q)^{\perp})$.
On the other hand, one can construct a localization out of a torsion theory. 
\begin{definition}
Let $\C$ be a Grothendieck category and $\tau=(\T,\F)$ a torsion theory. An object $X\in \C$ is {\em $\tau$-local} if $X\in\F$ and $E(X)/X\in \F$. 
The full subcategory of $\C$ of all the $\tau$-local objects is denoted by $\C/\T$. 
\end{definition}

The definition of the localization induced by a torsion theory depends on the following lemma.

\begin{lemma}{\rm \cite{Gabriel}}
Let $\C$ be a Grothendieck category and let $\tau=(\T,\F)$ be a torsion theory. Then, the canonical inclusion $\C/\T\to \C$ has a left adjoint functor which is exact.
\end{lemma}

\begin{definition}
Let $\C$ be a Grothendieck category and let $\tau=(\T,\F)$ be a torsion theory. The inclusion  \mbox{$\S_\tau:\C/\T\to \C$} is called {\em $\tau$-section functor}, while its left adjoint functor $\Q_\tau:\C\to \C/\T$ is called {\em $\tau$-quotient functor}. The composition $\loc_\tau=\S_\tau\Q_\tau$ is called {\em $\tau$-localization functor}.\end{definition}

Let us now recall that the {\em Gabriel filtration} of $\C$ is a transfinite chain 
$\{0\}=\C_{-1}\subseteq \C_0\subseteq \dots \subseteq \C_\alpha\subseteq \dots$
of torsion classes defined as follows:
\begin{enumerate}[--]
\item $\C_{-1}=\{0\}$;
\item suppose that $\alpha$ is an ordinal for which $\C_{\alpha}$ has already been defined. An object $C\in \C$ is said to be {\em $\alpha$-cocritical} if $C$ is $\tau_\alpha$-torsion free and every proper quotient of $C$ is $\tau_\alpha$-torsion, where $\tau_{\alpha}$ is the unique torsion theory whose torsion class is $\C_{{\alpha}}$. In the sequel, we will just say $\alpha$-torsion (resp., torsion free) instead of $\tau_\alpha$-torsion (resp., torsion free). 
We let $\C_{\alpha+1}$ be the smallest hereditary torsion class containing $\C_{{\alpha}}$ and all the ${\alpha}$-cocritical objects;
\item if $\lambda$ is a limit ordinal, we let $\C_\lambda$ be the smallest hereditary torsion class containing $\bigcup_{\alpha<\lambda}\C_\alpha$.
\end{enumerate}
For any ordinal $\alpha$, we let $\tor_\alpha\colon \C\to \C_{\alpha}$ and $\Q_\alpha\colon \C\to\C/\C_{\alpha}$ be respectively the torsion and the quotient functors. Abusing notation, we use the same symbols for the functors $\tor_\alpha\colon \C_{\alpha+1}\to \C_{\alpha}$ and $\Q_\alpha\colon \C_{\alpha+1}\to\C_{\alpha+1}/\C_{\alpha}$, induced by restriction.

\begin{remark}
By definition, a Grothendieck category $\C$ has a generator $G$. Furthermore, $G$ has just a set (as opposed to a proper class) of subobject, equivalently, it has just a set of quotients. One can show that $\C_{\alpha+1}$ is the smallest  torsion class containing $\C_\alpha$ and the $\alpha$-cocritical quotients of $G$. As a consequences we obtain that there is an ordinal $\kappa$ such that $\C_\alpha=\C_\kappa$ for all $\alpha\geq \kappa$ (just take $\kappa=\sup\{\alpha:\text{there are $\alpha$-cocritical quotients of $G$}\}$).
\end{remark}

Consider the union $\bar \C=\bigcup_{\alpha}\C_\alpha$ of the Gabriel filtration (this makes sense by the above remark). An object belonging to $\bar \C$ is said to be an object with Gabriel dimension. The Gabriel dimension of $M$ is the minimal ordinal $\delta$ such that $M\in\C_\delta$, in symbols $\Gdim(M)=\delta$. 
In general, it may happen that $\C\neq \bar \C$; if $\C=\bar \C$, we say that $\C$ is a {\em Gabriel category} with {\em Gabriel dimension} $\Gdim (\C)=\kappa$, where $\kappa$ is the smallest ordinal such that $\C_\kappa=\C$. 

\medskip
In the following example we specialize some of the above notions to categories of modules. Even if the example is stated for modules, part (1) holds for any Grothendieck category, taking a generator in place of $_{R}R$. Part (2) can be generalized to any Grothendieck category with a set of Noetherian generators. 
\begin{example}
Let $R$ be a ring and let $\C=\lmod R$ be the category of left $R$-modules. 
\begin{enumerate}[\rm (1)]
\item If $\C$ has Gabriel dimension $\Gdim(\C)=\kappa$, then $\Gdim({}_RR)=\kappa$. In fact, the inequality $\Gdim(\C)\geq \Gdim({}_RR)$ is trivial. For the converse, just notice that if $\alpha$ is an ordinal for which ${}_RR\in \C_\alpha$, then $\C_\alpha=\C$. This is because $\C_\alpha$ is closed under arbitrary direct sums and quotients, so it contains any quotient of a free module, that is, any module.
\item If $R$ is left Noetherian then $\C$ is a Gabriel category. In fact, given an ordinal $\alpha$, $\C_{\alpha}=\C_{\alpha+1}$ implies that ${}_RR\in \C_{\alpha}$. To show this notice that, if $\alpha$ is an ordinal for which ${}_RR\notin\C_\alpha$, the set $\mathcal I=\{{}_RI\leq R: {}_R(R/I)\notin \C_\alpha\}$ is non-empty and, by Noetherianity, $\mathcal I$ has a maximal element $\bar I$. Then, ${}_R(R/\bar I)\notin \C_\alpha$ but any of its proper quotients is in $\C_\alpha$. So, ${}_R(R/\bar I)$ is $\alpha$-cocritic and it belongs to $\C_{\alpha+1}$, which therefore properly contains $\C_\alpha$.
\end{enumerate}
\end{example}

In what follows we collect some well-known properties of Gabriel dimension (for a proof see \cite{GR}). Recall that a Grothendieck category $\C$ is said to be {\em semi-Artinian} if $\Gdim(\C)=0$.

\begin{lemma}\label{pre1}
Given a Gabriel category $\C$, the following statements hold true:
\begin{enumerate}[\rm (1)]
\item $\Gdim(\C)=\sup\{G.\dim(M): M\in\C\}$;
\item if $N\leq M\in \C$, then $\Gdim(M)=\max\{\Gdim(N),\Gdim(M/N)\}$;
\item let $\alpha$ be an ordinal $< \Gdim(\C)$ and $M \in \C$, then $M\in \C_{\alpha+1}$ if and only if there exists an ordinal $\sigma$ and a continuous chain 
$0=N_0\leq N_1\leq \dots\leq N_\sigma=M$, such that $N_{i+1}/N_i$ is either $\alpha$-cocritical or $\alpha$-torsion for every $i< \sigma$;
\item let $\lambda$ be a limit ordinal $\leq \Gdim(\C)$ and $M \in \C$, then $M\in \C_{\lambda}$ if and only if $M=\bigcup_{\alpha<\lambda}\tor_\alpha(M)$;
\item $\C_{\alpha+1}/\C_\alpha$ is semi-Artinian for all $\alpha<\Gdim(\C)$;
\item let  $N\in\C$ be a Noetherian object, then $\Gdim(N)$ is a successor ordinal. Furthermore, there exists a finite series $0=N_0\leq N_1\leq\dots\leq N_k=N$ such that $N_{i}/N_{i-1}$ is cocritical for all $i=1,\dots,k$.
\end{enumerate}
\end{lemma}
%
%
%
%
%
%
%

\subsection{Length functions compatible with self-equivalences}\label{Sec_compatibility}\label{examples}

All along this section, let $\mathfrak C$ be a Grothendieck category, let $L\colon \C\to\R_{\geq0}\cup\{\infty\}$ be a length function and let $F\colon\C\to \C$ be a self-equivalence, that is, 
\begin{enumerate}[\rm (Eq. 1)]
\item $F$ is {\em essentially surjective}, i.e., for all $X\in \C$, there exists $Y\in \C$ such that $F(X)\cong Y$;
\item $F$ is {\em fully faithful}, i.e., for all $X,Y\in \C$, the natural morphism $\Hom_\C(X,Y)\to \Hom_\C(F(X),F(Y))$ is an isomorphism.
\end{enumerate}
A consequence of the definition is that $F$ preserves any structure defined by universal properties, in particular, $F$ commutes with direct and inverse limits and it preserves exactness of sequences. Furthermore, $F$ commutes with injective envelopes and it preserves lattices of subobjects. 
It is easily seen that 
\begin{equation}\label{def_L_F}
L_F:\C\to \R_{\geq0}\cup\{\infty\}\ \ \text{ such that }\ \  L_F(M)=L(F(M))\,,
\end{equation} for all $M\in\C$ is a length function. In what follows we are going to study to what extent $L_F$ can differ from $L$.
The following example shows that $L$ and $L_F$ may be very different.

\begin{example}
Consider a field $K$ and consider the category $\lmod {K\times K}\cong \lmod K\times \lmod K$. This category is semi-Artinian and it has a self-equivalence $F\colon\lmod {K\times K}\to \lmod {K\times K}$ such that $(M,N)\mapsto (N,M)$ and $(\phi,\psi)\mapsto (\psi,\phi)$. If we take $L$ to be the length function such that $L((M,N))=\dim_K(M)$, then clearly $L_F((M,0))=0\neq \dim_K(M)=L((M,0))$, provided $M\neq 0$. 
\end{example}

\begin{definition}
Given a Grothendieck category $\C$ and a self-equivalence $F\colon \C\to \C$, we say that a length function $L\colon\C\to \R_{\geq0}\cup\{\infty\}$ is {\em compatible} with $F$ provided  $L_F(M)=L(F(M))=L(M)$ for all $M\in\C$.
\end{definition}

Our motivation for studying compatibility of  length functions with  self-equivalences is the following: given a ring $R$  and  a ring automorphism $\phi\colon R\to R$, we define a self-equivalence 
\begin{equation}\label{eq_equiv_auto}F_\phi\colon \lmod R\to \lmod R\,,\end{equation}
which is the scalar restriction along $\phi$, that is: given a left $R$-module ${}_{R}M$ such that $R$ acts on $M$ via a ring homomorphism $\lambda\colon R\to \End_\Z(M)$,  $F_\phi(M)$ is isomorphic to $M$ as an Abelian group, while $R$ acts on $F_\phi(M)$ via the ring homomorphism $\lambda\circ\phi\colon R\to \End_\Z(F_\phi(M))$. We are interested in finding length functions $L$ such that $L(F_\phi(M))=L(M)$. 

\subsection{Examples of length functions}

\begin{example}\label{rk}
Let $D$ be a left Ore domain, let $\Sigma=D\setminus \{0\}$ and let $Q=\Sigma^{-1}D$ be the ring of left fractions of $D$. For any $M\in\lmod D$, the torsion free rank $\rk(M)$ is defined to be the dimension of the left module $\Sigma^{-1}M=Q\otimes_{D}M$ over the skew field $Q$. The torsion free rank $\rk\colon\lmod D\to \R_{\geq 0}\cup\{\infty\}$, 
where $\rk(M)=\infty$ whenever the rank of $M$ is not finite, is a length function. In particular, the dimension of vector spaces over a field is a length function. 
\end{example}

\begin{example}
The logarithm of the cardinality $\log|-|\colon\lmod \Z\to \R_{\geq 0}\cup\{\infty\}$, 
where $\log|G|=\infty$ whenever $G$ is not finite, is a  length function.
\end{example}

In what follows we define the length of a lattice. This can be applied to the lattice of subobjects of a given object in a Grothendieck category obtaining the so-called composition length, which is the invariant inspiring the abstract notion of length function.

\begin{definition}\label{lattice-length}
Let $(\L,\leq,0,1 )$ be a bounded lattice. A sequence of elements $\sigma:\, 0=x_0\lneq x_1\lneq \dots\lneq x_n=1$
is said to be a {\em series} of $\L$. Furthermore, the number of strict inequalities in a series $\sigma$, is called the {\em length} of $\sigma$, in particular the above series  has length $n$; we write it $\ell(\sigma)=n$. The {\em length} of $\L$ is 
$$\ell(\L)=\sup\{\ell(\sigma):\text{ $\sigma$ a series of $\L$}\}\,.$$
\end{definition}

\begin{example}\label{comp_length}
Let $\C$ be a Grothendieck category. Notice that a series of $\L(M)$ is just  a series of $M$. We can define a function $\ell\colon\C \to \R_{\geq 0}\cup\{\infty\}$, $\ell(M)=\ell(\L(M))$. 
Clearly, $\ell(M)=0$ if and only if $M=0$, $\ell(M)=1$ if and only if $M$ is a simple object and $\ell(M)<\infty$ if and only if $M$ is both Noetherian and Artinian. It is a standard result  that $\ell$ is a length function (see for example \cite{Fa} for a proof in module categories).
\end{example}

In the following proposition we introduce a family of length functions that will be extremely useful for our applications of algebraic entropy:

\begin{proposition}
Let $\C$ be a Grothendieck category, let $\alpha$ be an ordinal, and let $\C_\alpha$ be the $\alpha$-th layer in the Gabriel filtration of $\C$. Define an invariant
$$\ell_\alpha\colon \C\to \R_{\geq0}\cup\{\infty\}\ \ \ \text{such that }\ \ \ \ell_\alpha(M):=\ell(\L(\Q_\alpha M))$$
 is the length of the lattice of subobjects of $\Q_\alpha M$ in $\C/\C_{\alpha}$. Then, $\ell_\alpha$ is a length function, $\ker(\ell_\alpha)=\C_\alpha$, and $\ell_\alpha$ is compatible with any self-equivalence of $\C$. 
\end{proposition}
\begin{proof}
The fact that $\ell_\alpha$ is a length function follows since $\Q_\alpha$ is an exact left adjoint (so it commutes with direct limits and it sends short exact sequences to short exact sequences). The fact that $\ker (\ell_\alpha)=\C_\alpha$ follows since $\Q_\alpha(X)=0$ if and only if $X\in \C_\alpha$, and the unique  object in $\C/\C_\alpha$ whose composition length is zero is the $0$-object. The fact that $\ell_\alpha$  is compatible with any self-equivalence of $\C$ follows directly from the observation that, given $X\in \C$, the value $\ell_\alpha(X)$ just depends on the lattice of subobjects $\L(X)$, and any self-equivalence preserves lattices of subobjects. To show this one needs just to use the lattice theoretic description of Gabriel dimension (for this see \cite{Dim_ring_th}), and the lattice theoretic description of composition length given in Definition \ref{lattice-length}.
\end{proof}

\section{Amenable groups and crossed products}\label{infinite_groups}

\subsection{Amenable groups}\label{am_groups}
Amenable groups were defined by John von Neumann in 1929 as groups admitting a left-invariant mean. We adopt here an equivalent definition of amenability given by F\o lner \cite{Folner}. Indeed, consider two subsets $A,$ $C\subseteq G$, then
\begin{enumerate}[\rm --]
\item the {\em $C$-interior} of $A$ is $\In_C(A)=\{x\in G:xC\subseteq A\}$;
\item the {\em $C$-exterior} of  $A$ is $\Out_C(A)=\{x\in G:xC\cap A\neq \emptyset\}$;
\item the {\em $C$-boundary} of $A$ is $\partial_C(A)=\Out_C(A)\setminus \In_C(A)$.
\end{enumerate}
If $e\in C$, one can imagine the above notions as in the following picture
\begin{center}
\begin{tikzpicture}[scale = 0.85,fill=lightgray]
\scope
\fill (0.2,0.2) [lightgray]  circle (1.8);
\fill (0.2,0.2)  [white] circle (1);
\endscope
\scope
\fill (6.2,0.2) [lightgray]  circle (1);
\endscope
\scope
\fill (12.2,0.2) [lightgray]  circle (1.8);
\endscope
\draw (0.2,0.2) [dotted] circle (1.8) (0.2,0.2)  node [text=black,above] {$A$};
\draw (1.4,0.2) [dotted] circle (0.28) (1.4,0) node [text=black,above] {$\overset{x}{.}$};
\draw (-1.8,-1.8) [dotted] circle (0.28) (-1.8,-2) node [text=black,above] {$\overset{e}{.}$}(-1.6,-1.8) node [text=black,right] {$C$};
\draw     (0.2,0.2)  circle (1.4) (0.2,2)  node [text=black,above] {};
\draw     (0.2,0.2)  [dotted] circle (1) (0.2,2)  node [text=black,above] {};
\draw   (-2.5,-2.5) rectangle (2.5,2.5) (2,2) node [text=black,left] {$G$} (0,-2.5) node [text=black,below] {$\partial_C(A)$};
\draw (6.8,0.2) [dotted] circle (0.28) (6.8,0) node [text=black,above] {$\overset{x}{.}$};
\draw (4.8,-1.8) [dotted] circle (0.28) (4.8,-2) node [text=black,above] {$\overset{e}{.}$}(5,-1.8) node [text=black,right] {$C$};
\draw     (6.2,0.2)  circle (1.4) (6.2,0.2)  node [text=black,above]  {$A$};
\draw     (6.2,0.2)  [dotted] circle (1) (6.2,2)  node [text=black,above] {};
\draw   (3.5,-2.5) rectangle (8.5,2.5) (8,2) node [text=black,left] {$G$}(6,-2.5)node [text=black,below] {$\In_C(A)$};
\draw (12.2,0.2) [dotted] circle (1.8) (12.2,0.2)  node [text=black,above] {$A$};
\draw (12.8,0.2) [dotted] circle (0.28) (12.8,0) node [text=black,above] {$\overset{x}{.}$};
\draw (10.8,-1.8) [dotted] circle (0.28) (10.8,-2) node [text=black,above] {$\overset{e}{.}$}(11,-1.8) node [text=black,right] {$C$};
\draw     (12.2,0.2)  circle (1.4) (12.2,2)  node [text=black,above] {};
\draw   (9.5,-2.5) rectangle (14.5,2.5) (14,2) node [text=black,left] {$G$}(12,-2.5)node [text=black,below] {$\Out_C(A)$};
\end{tikzpicture}\end{center}
\begin{definition} 
A group $G$ is {\em amenable} if and only if there exists a directed set $(I,\leq)$ and a net $\{F_i:i\in I\}$ of finite subsets of $G$ such that, for any finite subset $C$ of $G$, 
\begin{equation}\label{lim_condition}\lim_{I}\frac{|\partial_C(F_i)|}{|F_i|}=0\, .\end{equation}
Any such net is called a {\em F\o lner net}. 
\end{definition}
If $G$ is countable, then one can take $I=\N$ and just speak about {\em F\o lner sequences}. Furthermore, given a countable group $G$ and a F\o lner sequence $\s=\{F_n:n\in\N\}$, we say that $\s$ is a {\em F\o lner exhaustion} if 
\begin{enumerate}[\rm (1)]
\item $e\in F_0$ and $F_n\subseteq F_{n+1}$ for all $n\in\N$;
\item $\bigcup_{n\in\N}F_n=G$.
\end{enumerate}
It can be proved  that a finitely generated group is amenable if and only if it admits a F\o lner exhaustion (see for example \cite[Lemma 5.3]{Pete}).

\begin{example}
\begin{enumerate}[\rm (1)]
\item Any finite group and any finitely generated Abelian group is amenable;
\item it is known that the class of amenable groups is closed under the operations of taking subgroups, taking factors over normal subgroups, taking extensions and taking increasing unions. We obtain that a group $G$ is amenable if and only if all its finitely generated subgroups are amenable, in particular, arbitrary Abelian groups and locally finite groups are amenable;
\item consider a finitely generated group $G$ and let $S=S^{-1}$ be a finite set of generators  containing the unit of $G$. For all $n\in\N_+$, let $B_n(S)=\{s_1\cdot\ldots\cdot s_n:s_i\in S\}$ and define a function:
$$f_S:\N_+\to \N_+ \ \ \text{such that}\ \ f_S(n)=|B_n(S)|\,.$$
The group $G$ is said to be of sub-exponential growth if the growth of $f_S$ is sub-exponential (it can be shown that this notion does not depend on the choice of the generating set $S$). If $G$ is of sub-exponential growth, then $\{B_n(S):n\in\N_+\}$ is a F\o lner exhaustion for $G$ (see for example \cite[Section 6.11]{Ceccherini_libro});
\item non-commutative free groups are not amenable. So, any group which contains a free subgroup of rank $2$ is not amenable. There exist amenable groups with exponential growth. 
\end{enumerate}
\end{example}

\subsection{Non-negative real functions on finite subsets of an amenable group}\label{quasi_tiling}
In their seminal paper \cite{OW}, Ornstein and Weiss introduced a notion of entropy for actions of amenable groups on metric spaces. Using the theory of quasi-tiles, they were able to prove that the $\limsup$ defining their entropy is a true limit. In what follows, we recall some of these deep results as they can be applied with just minor changes to our  algebraic setting. The following terminology and results are due to Ornstein and Weiss \cite{OW} (see also  \cite{HYG} and \cite{WZ}). 

\medskip
Denote by $\F(G)$ the family of all finite subsets of $G$. Let $A_1,\dots, A_k \in \F(G)$ and $\varepsilon \in (0, 1)$. The family $\{A_1,\dots, A_k\}$ is {\em $\varepsilon$-disjoint} if there are $A'_1,\dots, A'_k \in \F(G)$ such that
\begin{enumerate}[\rm (1)]
\item $A'_i \subseteq A_i$ and $|A'_i|/|A_i| > 1 - \varepsilon$ for $i = 1,\dots , k$;
\item $A'_i \cap A'_j=\emptyset$ if $1 \leq i \neq j \leq k$.
\end{enumerate}
Given $\alpha \in (0, 1]$ and $A\in \F(G)$,  $\{A_1,\dots , A_k\}$ is an {\em $\alpha$-cover} of $A$ if 
$$\frac{\left|A \cap ( \bigcup_{i=1}^k A_i)\right| }{|A|} \geq \alpha\,.$$ 
Finally,  $\{A_1, \dots , A_k\}$ {\em $\varepsilon$-quasi-tiles} $A$ if there exist $C_1, \dots , C_k\in\F(G)$ such that
\begin{enumerate}[\rm (1)]
\item  $C_iA_i \subseteq A$, for all $i = 1,\dots, k$, and $\{cA_i : c \in C_i\}$ forms an $\varepsilon$-disjoint family;
\item  $C_iA_i \cap C_jA_j = \emptyset$, if $1 \leq i \neq j \leq k$;
\item  $\{C_iA_i: i = 1,\dots , k\}$ forms a ($1-\varepsilon$)-cover of $A$.
\end{enumerate}
The subsets $C_1, \dots , C_k$ are called {\em tiling centers}. It is a deep result, due to Ornstein and Weiss, that whenever $G$ is an amenable group and $\{F_n\}_{n\in\N}$ is a F\o lner exhaustion, for any (small enough) $\varepsilon>0$, one can find a nice family of subsets of $G$ that $\varepsilon$-quasi-tiles $F_{n}$ for all (big enough) $n\in\N$. More precisely:
\begin{theorem}\label{esistenza_piastrelle}{\rm \cite{OW}}
Let $G$ be a finitely generated amenable group and let $\{F_n\}_{n\in\N}$ be a F\o lner exhaustion of $G$. Then, for all $\varepsilon\in(0,1/4)$ and $\bar n\in\N$, there exist positive integers $n_1,\dots,n_k$ such that $\bar n\leq n_1\leq \dots \leq n_k$ and $\{F_{n_1},\dots,F_{n_k}\}$ $\varepsilon$-quasi-tiles $F_m$, for any big enough $m$.
\end{theorem}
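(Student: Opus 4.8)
This is the Ornstein--Weiss quasi-tiling theorem, so the first option is simply to invoke \cite{OW}; detailed expositions adapted to a setup like the present one can also be found in \cite{HYG} and \cite{WZ}. If one instead wants a self-contained argument, the plan is to reproduce the classical proof, whose shape I outline below.

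The key ingredient I would isolate first is a \emph{covering lemma}: there is a constant $\lambda=\lambda(\varepsilon)\in(0,1)$ such that whenever $T\in\F(G)$ contains $e$ and $A\in\F(G)$ is sufficiently $T$-invariant (meaning $|\partial_T(A)|/|A|$ lies below a threshold depending only on $\varepsilon$ and $|T|$), any \emph{maximal} $\varepsilon$-disjoint family $\{cT:c\in C\}$ of left translates of $T$ with $CT\subseteq A$ already satisfies $|CT|\ge\lambda|A|$. I would prove this by double counting: maximality forces every $g$ with $gT\subseteq A$ to meet the union $\bigcup_{c\in C}T'_c$ of the $\varepsilon$-shrunk translates witnessing $\varepsilon$-disjointness in at least $\varepsilon|T|$ points (otherwise the complementary part of $gT$ would let us enlarge the family), and summing this estimate over the elements $g$ of $A$ with $gT\subseteq A$ — which by invariance are almost all of $A$ — bounds $|CT|$ below by a fixed fraction of $|A|$ (one gets $\lambda\approx\varepsilon/2$).

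Next I would iterate the covering lemma, using the tiles from the largest to the smallest. First pick $n_1\ge\bar n$; then inductively pick $n_{j+1}>n_j$ so that $F_{n_{j+1}}$ is so invariant, relative to the scale $F_{n_j}$, that deleting from any sufficiently invariant set a disjoint union of translates of $F_{n_{j+1}}$ leaves a residual set still invariant enough to run the covering lemma with $F_{n_j}$ (this is why the indices increase, and the choice is possible because $\{F_n\}$ is a F\o lner exhaustion); stop after $k$ tiles, with $k$ chosen so that $(1-\lambda)^k<\varepsilon$. Given any $n\ge\bar n$, I would apply the covering lemma inside $F_n$ with the tile $F_{n_k}$ to get a tiling center $C_k$, pass to the uncovered set $F_n\setminus C_kF_{n_k}$, which by the choice of $n_k$ is still invariant enough to admit the covering lemma with $F_{n_{k-1}}$, and continue down to $F_{n_1}$. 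Since at each stage the new translates lie inside the current uncovered region, the sets $C_iF_{n_i}$ are pairwise disjoint and each is contained in $F_n$, and after $k$ stages the uncovered part of $F_n$ has size at most $(1-\lambda)^k|F_n|<\varepsilon|F_n|$; thus $\{C_iF_{n_i}:i=1,\dots,k\}$ is a $(1-\varepsilon)$-cover of $F_n$, i.e.\ $\{F_{n_1},\dots,F_{n_k}\}$ $\varepsilon$-quasi-tiles $F_n$.

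The hard part is the covering lemma, and within it the extraction of an explicit positive $\lambda(\varepsilon)$ for the fraction covered by a maximal packing; the rest is careful bookkeeping, namely choosing the invariance rates along $n_1\le\cdots\le n_k$ fast enough that sufficient invariance survives each passage to the successive residual sets.
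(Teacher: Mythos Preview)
Your proposal is correct and matches the paper's treatment exactly: the paper does not prove this theorem at all but simply states it with the attribution \cite{OW}, noting also \cite{HYG} and \cite{WZ} as references, precisely as you suggest in your first sentence. Your additional outline of the Ornstein--Weiss argument (covering lemma plus iterated application from the largest tile down) is accurate and goes beyond what the paper provides, but since the paper treats this as a black-box cited result, there is nothing further to compare.
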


In the rest of the subsection we recall some results and terminology about non-negative invariants for the finite subsets of $G$, that is, functions $f : \F(G) \to \R_{\geq0}$. In particular, we say that $f$ is
\begin{enumerate}[\rm (1)]
\item {\em monotone} if $f(A)\leq f(A')$, for all $A\subseteq A'\in \F(G)$;
\item {\em sub-additive} if  $f(A\cup A')\leq f(A)+f(A')$, for all $A,$ $A'\in \F(G)$;
\item {\em (left) $G$-equivariant} if $f(gA)= f(A)$, for all $A\in \F(G)$ and $g\in G$.
\end{enumerate}
A consequence of (3) above is that $f(\{g\})=f(\{e\})$, for all $g\in G$. Thus by (2), $f(A)\leq\sum_{g\in A}f(\{g\})= |A|f(e)$, for all $A\in \F(G)$.

\medskip
The following result, generally known as ``Ornstein-Weiss Lemma", is proved in \cite{OW} for a suitable class of locally compact amenable groups (a direct proof, along the same lines, in the discrete case can be found in \cite{WZ}, while a nice alternative argument, based on ideas of Gromov, is given in \cite{Krieger}).

\begin{lemma}
Let $G$ be a finitely generated amenable group and consider a monotone, sub-additive and $G$-equivariant function $f:\F(G)\to \R_{\geq 0}$. Then, for any F\o lner sequence $\{F_n\}_{n\in\N}$, the sequence $(f(F_n)/|F_n|)_{n\in\N}$ converges and the value of the limit $\lim_{n\in \N}{f(F_n)}/{|F_n|}$
is the same for any choice of the F\o lner sequence.
\end{lemma}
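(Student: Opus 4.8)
The plan is as follows. Write $b=f(\{e\})$. I would first record two consequences of the hypotheses that get used repeatedly: since $f$ is $G$-equivariant and sub-additive, $f(\{g\})=b$ for all $g$, hence $f(A)\leq|A|\,b$ for every $A\in\F(G)$ (so every normalized value $f(A)/|A|$ lies in $[0,b]$), and for all finite $C,A\subseteq G$ there is the \emph{tiling bound}
\[
f(CA)\ =\ f\Big(\bigcup_{c\in C}cA\Big)\ \leq\ \sum_{c\in C}f(cA)\ =\ |C|\,f(A).
\]
Then I would set $\lambda_0=\inf\{f(A)/|A|:\emptyset\neq A\in\F(G)\}$, which is manifestly independent of any F\o lner sequence. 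Since $f(F_n)/|F_n|\geq\lambda_0$ trivially, it suffices to prove $\limsup_n f(F_n)/|F_n|\leq\lambda_0$ for an arbitrary F\o lner sequence $\{F_n\}_{n\in\N}$; this gives at once convergence to $\lambda_0$ and independence of the sequence.

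For the upper bound, I would fix $\varepsilon\in(0,1/4)$ and, assuming for the moment a finite family of ``tiles'' $A_1,\dots,A_k\in\F(G)$ with $f(A_i)/|A_i|<\lambda_0+\varepsilon$ that $\varepsilon$-quasi-tiles every sufficiently invariant finite subset of $G$, argue as follows. For $n$ large $F_n$ is sufficiently invariant, so there are tiling centers $C_1,\dots,C_k$ with $C_iA_i\subseteq F_n$, with $C_1A_1,\dots,C_kA_k$ pairwise disjoint, with each $\{cA_i:c\in C_i\}$ being $\varepsilon$-disjoint, and with $\sum_i|C_iA_i|\geq(1-\varepsilon)|F_n|$. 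Applying sub-additivity to the splitting of $F_n$ into $\bigsqcup_iC_iA_i$ and the remainder $R=F_n\setminus\bigsqcup_iC_iA_i$, then the tiling bound to each $f(C_iA_i)$, then $f(R)\leq|R|\,b\leq\varepsilon|F_n|\,b$, then $|C_i|\leq|C_iA_i|/\big((1-\varepsilon)|A_i|\big)$ from $\varepsilon$-disjointness, and finally $\sum_i|C_iA_i|\leq|F_n|$, I would obtain
\[
f(F_n)\ \leq\ \varepsilon|F_n|\,b+\sum_{i=1}^{k}|C_i|\,f(A_i)\ \leq\ \varepsilon|F_n|\,b+\frac{1}{1-\varepsilon}\sum_{i=1}^{k}|C_iA_i|\,\frac{f(A_i)}{|A_i|}\ \leq\ \Big(\varepsilon b+\frac{\lambda_0+\varepsilon}{1-\varepsilon}\Big)|F_n|.
\]
Hence $\limsup_n f(F_n)/|F_n|\leq\varepsilon b+(\lambda_0+\varepsilon)/(1-\varepsilon)$, and letting $\varepsilon\to0$ concludes.

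The hard part will be producing, for each $\varepsilon$, the tile family $A_1,\dots,A_k$: one needs finitely many sets that are simultaneously close to the infimum $\lambda_0$ in normalized value and invariant enough for the quasi-tiling theorem to $\varepsilon$-quasi-tile all sufficiently invariant finite sets. This is where Theorem~\ref{esistenza_piastrelle} enters, applied to a carefully chosen F\o lner exhaustion: starting from any F\o lner exhaustion one passes to a sub-exhaustion — still a F\o lner exhaustion, a subsequence of one being again one — along which the normalized values decrease (monotonicity of $f$ is used here to compare successive members), so that the tiles $A_i=F_{n_i}$ produced by Theorem~\ref{esistenza_piastrelle} at large index have normalized value arbitrarily close to the infimum of the normalized values over that exhaustion; a comparison — running the displayed estimate between two F\o lner exhaustions — then identifies that infimum with $\lambda_0$ and also shows that the ``$F_n$ sufficiently invariant for large $n$'' hypothesis used above holds for the members of \emph{any} F\o lner sequence, not just exhaustions. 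Everything else — the bound $f(A)\leq|A|\,b$, the tiling bound, and the displayed chain of inequalities, with the boundary term $f(R)$ controlled by the $\varepsilon$-cover property — is routine and I would not dwell on it.
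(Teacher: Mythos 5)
The paper does not prove this lemma; it cites \cite{OW}, \cite{WZ}, and \cite{Krieger} for the proof. Your overall plan is the standard Ornstein--Weiss argument (quasi-tile a large F\o lner set by a fixed family and run the subadditive estimate; your displayed chain of inequalities is essentially a re-derivation of Corollary~\ref{scelta_unica_n}), but the ``hard part'' you defer is where all the content lies, and your sketch of it has genuine gaps.

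Concretely: (i) the claim that monotonicity of $f$ lets you pass to a sub-exhaustion along which the normalized values decrease is wrong --- monotonicity gives $f(A)\leq f(A')$ for $A\subseteq A'$, which says nothing about how $f(F_n)/|F_n|$ compares with $f(F_{n+1})/|F_{n+1}|$; you can extract a subsequence along which $f(F_{n_j})/|F_{n_j}|$ converges to $\liminf_n f(F_n)/|F_n|$, but that is elementary analysis, not monotonicity, and that liminf is over the chosen exhaustion and is not a priori equal to $\lambda_0$. (ii) Theorem~\ref{esistenza_piastrelle} as stated in the paper only asserts that the tiles $F_{n_1},\dots,F_{n_k}$ $\varepsilon$-quasi-tile the sets $F_n$ of the \emph{same} exhaustion; your estimate requires them to quasi-tile the members of an arbitrary, unrelated F\o lner sequence, which needs the stronger form of the quasi-tiling theorem (the tiles quasi-tile every sufficiently $F_{n_k}$-invariant finite subset) --- available in the cited sources but not in the version restated here. (iii) The ``comparison'' meant to identify the sub-exhaustion's infimum with $\lambda_0$ is left unexplained and is in fact circular: you need tiles whose normalized value is close to $\lambda_0$, which is exactly what you are trying to produce. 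The usual remedy is to aim for less: show $\limsup_n f(F_n)/|F_n|\leq\liminf_n f(F'_n)/|F'_n|$ for any two F\o lner sequences, taking the tiles from a sub-exhaustion of $\{F'_n\}$ along which the normalized value approaches its liminf and invoking the strong quasi-tiling statement so they also tile the $F_n$. Taking $\{F'_n\}=\{F_n\}$ gives convergence, the general case gives independence, and the (true) identification of the limit with $\lambda_0$ is then a separate corollary that the lemma does not require.
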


We conclude this paragraph with the following consequence of Theorem \ref{esistenza_piastrelle}. The proof is essentially given in  \cite{HYG}, we give it here for completeness sake, as our statement is slightly different.

\begin{corollary}\label{scelta_unica_n}
Let $G$ be a finitely generated amenable group and $\{F_n\}_{n\in\N}$ be a F\o lner exhaustion of $G$. Then, for any $\varepsilon\in(0,1/4)$ and $\bar n\in\N$ there exist integers $n_1, \dots , n_k$ such that $\bar n \leq n_1  \leq \dots\leq n_k$ and, for any sub-additive and $G$-equivariant $f : \F(G) \to \R$ we have
$$\limsup_{n\to\infty}\frac{f(F_n)}{|F_n|}\leq M \varepsilon+\frac{1}{1 - \varepsilon}\cdot\max_{1\leq i\leq k} \frac{f(F_{n_i})}{|F_{n_i}|}\, ,$$
where $M = f(\{e\})$. 
\end{corollary}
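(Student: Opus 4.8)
The plan is to feed the given $\varepsilon$ and $\bar n$ into the quasi-tiling theorem and then estimate $f(F_n)$ by cutting $F_n$ along a quasi-tiling. First I would apply Theorem \ref{esistenza_piastrelle} with the present $\varepsilon\in(0,1/4)$ and $\bar n$, obtaining integers $\bar n\le n_1\le\dots\le n_k$ such that $\{F_{n_1},\dots,F_{n_k}\}$ $\varepsilon$-quasi-tiles $F_n$ for every $n\ge\bar n$. The crucial point is that these $n_i$ are fixed once and for all, independently of $n$; this is the only place amenability (through Ornstein--Weiss) enters. I would also record at the outset two elementary facts valid for any sub-additive, $G$-equivariant $f$: applying sub-additivity to $A\cup A$ gives $f(A)\le 2f(A)$, hence $f(A)\ge 0$ for all $A\in\F(G)$, so in particular $M=f(\{e\})\ge 0$; and, as already noted before the statement, sub-additivity together with $G$-equivariance gives $f(A)\le|A|\,f(\{e\})=M|A|$.

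Next, fix $n\ge\bar n$ and let $C_1,\dots,C_k\in\F(G)$ be tiling centers witnessing the $\varepsilon$-quasi-tiling of $F_n$ (these may depend on $n$). Put $R_n=F_n\setminus\bigcup_{i=1}^k C_iF_{n_i}$. Since each $C_iF_{n_i}\subseteq F_n$ and $\{C_iF_{n_i}\}_{i}$ is a $(1-\varepsilon)$-cover of $F_n$, one gets $|R_n|\le\varepsilon|F_n|$, whence $f(R_n)\le M|R_n|\le M\varepsilon|F_n|$. By sub-additivity, $f(F_n)\le\sum_{i=1}^k f(C_iF_{n_i})+f(R_n)$, and writing $C_iF_{n_i}=\bigcup_{c\in C_i}cF_{n_i}$ and using sub-additivity together with $G$-equivariance gives $f(C_iF_{n_i})\le|C_i|\,f(F_{n_i})$.

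The quantitative heart of the argument is the packing bound $\sum_{i=1}^k|C_i|\,|F_{n_i}|<|F_n|/(1-\varepsilon)$. To obtain it, for each $i$ I would use the $\varepsilon$-disjointness of $\{cF_{n_i}:c\in C_i\}$ to choose $A'_{i,c}\subseteq cF_{n_i}$ with $|A'_{i,c}|>(1-\varepsilon)|F_{n_i}|$ and $A'_{i,c}\cap A'_{i,c'}=\emptyset$ for $c\ne c'$ in $C_i$; since moreover $C_iF_{n_i}\cap C_jF_{n_j}=\emptyset$ for $i\ne j$, the whole family $\{A'_{i,c}:1\le i\le k,\ c\in C_i\}$ is pairwise disjoint and contained in $F_n$. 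Counting cardinalities gives $(1-\varepsilon)\sum_{i}|C_i|\,|F_{n_i}|<\sum_{i,c}|A'_{i,c}|\le|F_n|$. Now set $M_0=\max_{1\le i\le k}f(F_{n_i})/|F_{n_i}|$, which is $\ge 0$; then $\sum_i|C_i|\,f(F_{n_i})=\sum_i|C_i|\,|F_{n_i}|\cdot\frac{f(F_{n_i})}{|F_{n_i}|}\le M_0\sum_i|C_i|\,|F_{n_i}|<\frac{M_0}{1-\varepsilon}|F_n|$. Assembling the three estimates yields $f(F_n)\le\frac{M_0}{1-\varepsilon}|F_n|+M\varepsilon|F_n|$ for every $n\ge\bar n$; dividing by $|F_n|$ and passing to $\limsup_{n\to\infty}$ gives the asserted inequality.

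The step I expect to be the main obstacle is the bookkeeping in the packing bound: one must correctly combine the $\varepsilon$-disjointness inside each tile class $\{cF_{n_i}:c\in C_i\}$ with the disjointness $C_iF_{n_i}\cap C_jF_{n_j}=\emptyset$ across distinct classes, so as to produce a single genuinely pairwise-disjoint subfamily of $F_n$ of total size at least $(1-\varepsilon)\sum_i|C_i|\,|F_{n_i}|$. Everything else is a routine application of sub-additivity, $G$-equivariance and the cover clause of the definition of $\varepsilon$-quasi-tiling; note in particular that no monotonicity of $f$ is required.
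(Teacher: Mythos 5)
Your proposal is correct and follows essentially the same route as the paper's own proof: invoke Theorem \ref{esistenza_piastrelle} once to fix $n_1,\dots,n_k$, then for each $n\ge\bar n$ split $F_n$ into $\bigcup_i C_iF_{n_i}$ and its complement, bound the remainder by $M\varepsilon|F_n|$ via the $(1-\varepsilon)$-cover property, bound $f(\bigcup_i C_iF_{n_i})\le\sum_i|C_i|f(F_{n_i})$ by sub-additivity and $G$-equivariance, and control $\sum_i|C_i|\,|F_{n_i}|$ by the packing inequality coming from $\varepsilon$-disjointness. You are somewhat more explicit than the paper on two minor points — deriving $f\ge0$ from $f(A)\le 2f(A)$ (which reconciles the statement's codomain $\R$ with the bound used), and spelling out the disjoint-subsets argument behind $(1-\varepsilon)\sum_i|C_i||F_{n_i}|\le|F_n|$, which the paper simply asserts — but these are just gap-fills, not a different proof.
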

It is important to underline that the choice of the $n_1, \dots , n_k$ does not depend on the function $f$, but we can really find a family $\{n_1, \dots , n_k\}$, which works for all $f$ at the same time.
\begin{proof}
Let $\varepsilon\in(0,1/4)$ and $\bar n\in\N$. By Theorem \ref{esistenza_piastrelle}, there exist positive integers $n_1,\dots,n_k$ such that $\bar n\leq n_1\leq \dots \leq n_k$ and $\{F_{n_1},\dots,F_{n_k}\}$ $\varepsilon$-quasi-tiles $F_n$, for all $n\geq \bar n$. We let $C_1^1,\dots, C_k^n$ be the tiling centers for $F_n$. Thus, when $n\geq \bar n$, we have
$$ F_n \supseteq \bigcup_{i=1}^k C_{i}^nF_{n_i} \ \ \text{ and } \ \  \left| \bigcup_{i=1}^k C_{i}^nF_{n_i}\right| \geq \max\left\{(1 - \varepsilon)|F_n|\ ,\ (1-\varepsilon)\sum_{i=1}^k|C^n_{i}| \cdot |F_{n_i}|\right\}\,.$$
Now, let $f:\F(G)\to \R_{\geq 0}$ be a sub-additive and $G$-equivariant function, we obtain that
\begin{align*}
\frac{f(F_n)}{|F_n|}&\leq \frac{f\left(F_n \setminus\bigcup_{i=1}^kC_{i}^nF_{n_i}\right)}{|F_n|} + \frac{f\left(\bigcup_{i=1}^k C_{i}^nF_{n_i}\right)}{|F_n|}\\
&\leq M\cdot \frac{\left|F_n \setminus\bigcup_{i=1}^kC_{i}^nF_{n_i}\right|}{|F_n|}+\frac{f\left(\bigcup_{i=1}^k C_{i}^nF_{n_i}\right)}{\left|\bigcup_{i=1}^k C_{i}^nF_{n_i}\right|}\\
&\leq M\varepsilon+\frac{f\left(\bigcup_{i=1}^k C_{i}^nF_{n_i}\right)}{\left|\bigcup_{i=1}^k C_{i}^nF_{n_i}\right|}\\
&\leq M\varepsilon+\frac{\sum_{i=1}^k |C_{i}^n|f(F_{n_i})}{(1-\varepsilon)\sum_{i=1}^k|C^n_{i}| \cdot |F_{n_i}|}\leq
M\varepsilon+\frac{1}{1-\varepsilon}\cdot\max_{1\leq i\leq k}\frac{f(F_{n_i})}{|F_{n_i}|}\,.
\qedhere\end{align*}
\end{proof}

\subsection{Crossed products}\label{crossed_subs}

Given a group $G$ and a ring $R$, a {\em crossed product} $\RG $ of $R$ with $G$ is a ring constructed as follows: as a set, $\RG $ is the collection of all the formal sums of the form
$$\sum_{g\in G}r_g\underline g\, ,$$
with $r_g\in R$ and $r_g=0$ for all but finite $g\in G$, and where the $\underline g$ are symbols uniquely assigned to each  $g\in G$. Sum in $\RG $ is as expected, that is, it is defined component-wise exploiting the addition in $R$:
$$\left(\sum_{g\in G}r_g\underline g\right)+\left(\sum_{g\in G}s_g\underline g\right)=\sum_{g\in G}(r_g+s_g)\underline g\, .$$
In order to define a product in $\RG $, we need to take two maps
$$\sigma:G\to \Aut_{ring}(R)\ \  \text{ and }\ \  \rho:G\times G\to U(R) \, ,$$
where $\Aut_{ring}(R)$ denotes the group of automorphisms of $R$ in the category of (unitary) rings, while $U(R)$ is the group of units of $R$. Given $g\in G$ and $r\in R$ we denote the image of $r$ via the automorphism $\sigma(g)$ by $r^{\sigma(g)}$. We suppose also that $\sigma$ and $\rho$ satisfy the following conditions for all $r\in R$ and $g_1,$ $g_2$ and $g_3\in G$:
\begin{enumerate}[\rm (Cross.1)]
\item $\rho(g_1,g_2)\rho(g_1g_2,g_3)=\rho(g_2,g_3)^{\sigma(g_1)}\rho(g_1,g_2g_3)$;
\item $r^{\sigma(g_2)\sigma(g_1)}=\rho(g_1,g_2)r^{\sigma(g_1g_2)}\rho(g_1,g_2)^{-1}$;
\item $\rho(g,e)=\rho(e,g)=1$ (for all $g\in G$) and $\sigma(e)=1$.
\end{enumerate}
The product in $\RG $ is defined by the rule $(r\underline g)(s\underline h)=rs^{\sigma(g)}\rho(g,h)\underline{gh}$, together with bilinearity, that is
$$
\left(\sum_{g\in G}r_g\underline g\right) \left(\sum_{g\in G}s_g\underline g\right)=\sum_{g\in G}\left(\sum_{h_1h_2=g}r_{h_1}s_{h_2}^{\sigma(h_1)}\rho(h_1,h_2)\right)\underline g\, .$$
By (Cross.1) and (Cross.2) above, $\RG $ is an associative and unitary ring, while by (Cross.3) $1_{\RG }=\underline e$. For more details on this kind of construction we refer to \cite{Passman}.

\medskip
Notice that there is a canonical injective ring homomorphism $R\to \RG$ such that $r\mapsto r\underline e$; for this we identify $R$ with a subring of $\RG $. This allows one to consider a forgetful functor from $\lmod {\RG }\to \lmod R$. 
 On the other hand, in general there is no natural map $G\to \RG $ which is compatible with the operations, anyway the obvious assignment $g\mapsto \underline g$ respects the operations {\em modulo some units of $R$}. Thus, given a left $\RG$-module $M$, there is a canonical map 
\begin{equation}\label{azione_brutta}\lambda:G\to \Aut_{\Z}(M)\ \,, \ \ \ g\mapsto \lambda_{g}\ \,, \ \ \ \lambda_g(m)=\underline g m\,,\end{equation}
which is not in general a homomorphism of groups. Given an $R$-submodule ${}_{R}K\leq M$ and an element $g\in G$, $\lambda_g(K)$ is again an $R$-submodule of $M$ but it is not in general isomorphic to ${}_RK$. As described in \eqref{eq_equiv_auto}, 
there is a self-equivalence of the category $\lmod R$, induced by the ring automorphism $\sigma(g)$
$$F_{\sigma(g)}\colon\lmod R\to \lmod R\, .$$
It follows by the definitions that $\lambda_g(K)= \underline g K\cong F_{\sigma(g)} K$. In particular, if $L$ is a length function compatible with the equivalence $F_{\sigma(g)}$, then $L(\lambda_g(K))=L(K)$. This useful fact motivates the following
\begin{definition}\label{crossed_comp}
Let $\RG$ be a crossed product and let $L\colon\lmod R\to \R_{\geq 0}\cup\{\infty\}$ be a length function. Then, $L$ is said to be {\em compatible with $\RG$} provided $L$ is compatible with $F_{\sigma(g)}$, for all $g\in G$.
\end{definition}

\section{The algebraic $L$-entropy}\label{amenable_ent}

We fix all along this section a ring $R$, an infinite finitely generated amenable group $G$, a crossed product $\RG$ and a length function $L\colon\lmod R\to \R_{\geq0}\cup\{\infty\}$ compatible with $\RG$. \\
Let us remark that, if $R$ is a (skew) field, $L=\dim_R$ is compatible with any crossed product $\RG$; more generally, this happens for all the functions $\ell_\alpha$ described in Subsection \ref{examples}. On the other hand, if $\RG=R[G]$, then any length function is trivially compatible with $\RG$.


\subsection{Definition and basic properties}
In this subsection we  define the algebraic entropy as an invariant for left $\RG$-modules. It turns out that this notion is not well-behaved on all the $\RG$-modules but just on a subclass of $\lmod \RG$ of all the $\RG$-modules $M$ for which the family of submodules $\Fin_L(M):=\{{}_RK\leq M:L(K)<\infty\}$ is big enough.
More precisely:
\begin{definition}
A left $R$-module $M$ is said to be {\em locally $L$-finite} if $\Fin_L(M)$ contains all the finitely generated submodules of $M$. We denote by $\lFin(L)$ the class of all the locally $L$-finite left $R$-modules while we let $\lFin_L(\RG)$ be the class of all the left $\RG$-modules ${}_{\RG}M$ such that ${}_RM\in \lFin(L)$, that is
$$\lFin_L(\RG):=\{M\in\lmod {\RG}: L(K)<\infty \text{ for any f.g. $_RK\leq M$}\}\,.$$ 
\end{definition}  
Notice that $\lFin(L)$ is closed under taking direct limits, quotients and submodules but not in general under taking extensions (see \cite{SVV}).

\medskip
Let $M\in\lFin_L(\RG)$. Then, given $K\in \Fin_L(M)$ and $F\in \F(G)$ we let
$$T_F(K):=\sum_{g\in F}\underline g K\,.$$
By the additivity of $L$ and the compatibility of $L$ with $\RG$, $L(T_F(K))\leq \sum_{g\in F}L(\underline g K)=|F|\, L(K)<\infty$. 
In particular, for any $L$-finite submodule $K$ of $M$ we obtain a function
$$f_K:\F(G)\to \R_{\geq 0}\ \ \ f_K(F):=L(T_{F}(K))\, .$$
We now verify that the above function satisfies the hypotheses of the Ornstein-Weiss Lemma:
\begin{lemma}\label{verifica_def}
In the above notation, $f_K$ is monotone, sub-additive and $G$-equivariant. 
\end{lemma}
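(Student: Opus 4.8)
The plan is to verify the three listed properties directly from the formula $f_K(F)=L\bigl(T_F(\lambda,K)\bigr)$ (see \eqref{traj}), using only the elementary facts about additive functions collected in Lemma \ref{basic_add} together with the monoid-theoretic structure of the action $\lambda$; the compatibility hypothesis on $L$ will be needed only for the last of the three.

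For monotonicity, I would note that if $F\subseteq F'$ in $\F(G)$ then $T_F(\lambda,K)=\sum_{g\in F}\lambda_g(K)$ is a subobject of $T_{F'}(\lambda,K)=\sum_{g\in F'}\lambda_g(K)$ in $\L(M)$, whence $f_K(F)\leq f_K(F')$ by Lemma \ref{basic_add}(1). For sub-additivity, the point is that a finite join is additive in its indexing set, so that $T_{F\cup F'}(\lambda,K)=T_F(\lambda,K)+T_{F'}(\lambda,K)$ in $\L(M)$ for all $F,F'\in\F(G)$; Lemma \ref{basic_add}(2) then gives
\[
f_K(F\cup F')=L\bigl(T_F(\lambda,K)+T_{F'}(\lambda,K)\bigr)\leq L\bigl(T_F(\lambda,K)\bigr)+L\bigl(T_{F'}(\lambda,K)\bigr)=f_K(F)+f_K(F'),
\]
using $L\bigl(T_F(\lambda,K)\cap T_{F'}(\lambda,K)\bigr)\geq 0$. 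Applying the same estimate to the partition of $F$ into singletons also shows $f_K(F)\leq|F|\,L(K)<\infty$, confirming that $f_K$ indeed takes values in $\R_{\geq 0}$.

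The step needing the most care is $G$-equivariance. Fixing $g\in G$ and $F\in\F(G)$, I would recall that $\lambda\colon G\to\Aut(\L(M))$ is a group homomorphism whose components $\lambda_g$ are automorphisms of the monoid $(\L(M),+,0)$, and compute
\[
T_{gF}(\lambda,K)=\sum_{g'\in F}\lambda_{gg'}(K)=\sum_{g'\in F}\lambda_g\bigl(\lambda_{g'}(K)\bigr)=\lambda_g\Bigl(\sum_{g'\in F}\lambda_{g'}(K)\Bigr)=\lambda_g\bigl(T_F(\lambda,K)\bigr),
\]
where the reindexing uses the homomorphism property and the third equality uses that $\lambda_g$ preserves finite joins. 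It then remains to invoke compatibility: by Definition \ref{crossed_comp}, $L$ is compatible with the self-equivalence $F_{\sigma(g)}$ for every $g\in G$, and, as observed in the discussion preceding that definition, this yields $L\bigl(\lambda_g(N)\bigr)=L(N)$ for every $N\in\L(M)$. Taking $N=T_F(\lambda,K)$ gives
\[
f_K(gF)=L\bigl(\lambda_g(T_F(\lambda,K))\bigr)=L\bigl(T_F(\lambda,K)\bigr)=f_K(F),
\]
which completes the verification.
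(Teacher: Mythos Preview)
Your proof is correct and follows essentially the same approach as the paper's own proof: both verify the three properties directly from the identity $T_{F\cup F'}(\lambda,K)=T_F(\lambda,K)+T_{F'}(\lambda,K)$, the inclusion $T_F(\lambda,K)\leq T_{F'}(\lambda,K)$ when $F\subseteq F'$, and the relation $T_{gF}(\lambda,K)=\lambda_g(T_F(\lambda,K))$ combined with compatibility of $L$. You simply spell out the intermediate steps (in particular the use of Lemma~\ref{basic_add} and the monoid-homomorphism property of $\lambda_g$) more explicitly than the paper does.
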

\begin{proof}
Let $F,$ $F'\in\F(G)$. The proof follows by the properties of $L$, in fact, $T_{F}( K)+ T_{F'}( K)=T_{F\cup F'}( K)$ and so $f_K(F)+f_K(F')\geq f_K(F\cup F')$, proving sub-additivity. Furthermore, if $F\subseteq F'$ then $T_{F}(K)\leq T_{F'}(K)$ and so $f_K(F)\leq f_K(F')$, proving monotonicity. Finally, for any $g\in G$, 
$$T_{gF}(K)=\sum_{f\in F} \underline {gf}K=\sum_{f\in F} \underline {g}\ \underline{f}(\rho(g,f)^{\sigma(gh)})^{-1}K=\underline {g}\sum_{f\in F} \underline{f}K=\underline {g}T_F(K)\,,$$
since $(\rho(g,f)^{\sigma(gh)})^{-1}$ is a unit of $R$ and so $(\rho(g,f)^{\sigma(gh)})^{-1}K=K$.
Thus, by the compatibility of $L$ with $\RG$, $L(T_{gF}(K))=L(\underline gT_{F}(K))=L(T_{F}(K))$, proving that $f_K$ is $G$-equivariant.
\end{proof}

Thus, by the Ornstein-Weiss Lemma, the limit in the following definition exists and it does not depend on the choice of the F\o lner sequence.
\begin{definition}
Let $M\in\lFin_L(\RG)$, let $\{F_n\}_{n\in\N}$ be a F\o lner sequence for $G$ and let ${}_RK\in\Fin_L(M)$. The {\em $L$-entropy of ${}_{\RG}M$ with respect to $K$} is
$$\ent_L({}_{\RG}M,K):=\lim_{n\to\infty}\frac{L(T_{F_n}(K))}{|F_n|}\, .$$
The {\em $L$-entropy} of the $\RG$-module $_{\RG}M$ is $\ent_L(_{\RG}M):=\sup\{\ent_L(M,K):K\in\Fin_L(M)\}$.
\end{definition}

\begin{remark}
We defined the $L$-entropy for left $\RG$-modules in case $G$ is finitely generated. Anyway, the exact same procedure allows one to define this invariant when $G$ is just countable (but not necessarily finitely generated). Furthermore, standard variations of the above arguments using F\o lner nets allow one to define a similar invariant in case $G$ is not countable. 
\end{remark}

\begin{example}\label{ent_fin=0}
Given ${}_{\RG}M\in\lFin_L(\RG)$ such that $L({}_RM)<\infty$, $\ent_L({}_{\RG }M)=0$. Indeed, given $K\in\Fin_L(M)$, $\ent_L(M,K)\leq \lim_{n\to\infty} L(M)/|F_n|\leq \lim_{n\to\infty} L(M)/n=0$ (use the fact that, as $G$ is infinite, we can take a F\o lner sequence such that $F_n\lneq F_{n+1}$ for all $n\in\N$, thus $|F_n|\geq n$).
\end{example}

Given a locally $L$-finite $\RG$-module ${}_{\RG}M$ and an $\RG$-submodule $N\leq M$, there is an inclusion $\Fin_L(N)\subseteq \Fin_L(M)$, this easily implies the following lemma:
\begin{lemma}\label{monotonie}
Let ${}_{\RG}M\in\lFin_L(\RG)$ and ${}_\RG N\leq M$. Then, $\ent_L(_{\RG }M)\geq \ent_L(_{\RG }N)$.
\end{lemma}

In fact, entropy is also monotone under taking quotients, but we need some more work before showing that (see Corollary \ref{monotonia_quozienti}). Let us conclude this subsection with the following consequence of Corollary \ref{scelta_unica_n}:

\begin{corollary}\label{leq_max}
Let $\{F_n\}_{n\in\N}$ be a F\o lner exhaustion of $G$. Then, for any $\varepsilon \in (0,1/4)$ and $n\in \N$ there exist $n_1,\dots , n_k\in\N$ with $n\leq n_1\leq \dots\leq n_k$ such that, given an $L$-finite submodule $K\leq M$, 
$$\ent_L(M,K)\leq \varepsilon \cdot L(K)+\frac{1}{1-\varepsilon}\cdot \max_{1\leq i\leq k} \frac{L(T_{F_{n_i}}(K))}{|F_{n_i}|}\, .$$
\end{corollary}
\begin{proof}
This is a straightforward application of Corollary \ref{scelta_unica_n}. In fact, the function $f_{K}:\F(G)\to \R_{\geq 0}$ such that $f_K(F)=L(T_{F}(K))$ satisfies the hypotheses of such corollary for any $L$-finite $R$-submodule $K$ of $M$, by Lemma \ref{verifica_def}. Furthermore, $\lim_{n\to\infty}f_K({F_n})/|F_n|=\ent_L(M,K)$ by the definition of entropy.
\end{proof}

\subsection{The algebraic entropy is upper continuous}\label{upp_cont_subsec}

The following result allows us to redefine the algebraic entropy in terms of finitely generated submodules.

\begin{proposition}\label{upp-cont}
Let ${}_{\RG}M\in\lFin_L(\RG)$ and let $H\leq K\in \Fin_L(M)$. 
\begin{enumerate}[\rm (1)]
\item if $L(K/H)<\varepsilon$, then $\ent_L(M,K)-\ent_L(M,H)<\varepsilon$;
\item $\ent_L(_{\RG }M)=\sup\{\ent_L(M,K):{_RK\text{ finitely generated}}\}$.
\end{enumerate}
\end{proposition}
\begin{proof}
For any $g\in G$,  $\underline g K/\underline g H\cong F_{\sigma( g)}(K/H)$ so, since $L$ is compatible with $F_{\sigma(g)}$, $L(\underline g K/\underline g H)<\varepsilon$. Thus, by the additivity of $L$, 
$$L(T_{F_n}(K)/T_{F_n}(H))\leq \sum_{g\in F_n}L((\underline g K+T_{F_n}(H))/T_{F_n}(H))< |F_n|\varepsilon\, ,$$
for all $n\in \N$, where $\{F_n\}_{n\in\N}$ is a F\o lner sequence. Therefore, $\ent_L(M,K)-\ent_L(M,H)<\varepsilon$. We can now verify part (2). Indeed, the  inequality ``$\leq$" comes directly from the definition of entropy. On the other hand, given $K\in \Fin_L(M)$ and $\varepsilon>0$, by the upper continuity of $L$ there exists $H\leq K$ finitely generated such that $L(K)-L(H)<\varepsilon$. By part (1), $\ent_L(M,K)<\ent_L(M,H)+\varepsilon$, which easily yields the claim.
\end{proof}

The following corollary deals with the case when $M$ is  generated (as $\RG$-module) by an $L$-finite $R$-submodule $K$, that is, $M=T_G(K)$. In such situation one does not need to take a supremum to compute entropy. 

\begin{corollary}\label{fin}
Let $M$ be a left $\RG$-module such that $M=T_G(K)$ for some $K\in\Fin_L(M)$, then $$\ent_L(M)=\ent_L(M,K)\,.$$
\end{corollary}
\begin{proof}
Given a finitely generated $R$-submodule $H$ of $M$, we can find a finite subset $e\in F\subseteq G$ such that $H\subseteq T_F(K)$. This shows that $\ent_L(M,H)\leq \ent_L(M,T_F(K))$. By the F\o lner condition, 
\begin{equation*}\label{Bernoulli_folner}
\lim_{n\to\infty}\frac{|F_nF|}{|F_n|}\leq
\lim_{n\to\infty}\frac{|F_n\cup\bigcup_{f\in F}\partial_F( F_n)f|}{|F_n|}
\leq 1+\lim_{n\to\infty}\sum_{f\in F}\frac{|\partial_F( F_n)f|}{|F_n|}=1\, .\end{equation*}
On the other hand, ${|F_nF|}/{|F_n|}\geq 1$ so $\lim_{n\to\infty}{|F_nF|}/{|F_n|}=1$. We obtain that
$$\ent_L(M,T_F(K))=\lim_{n\to\infty}\frac{T_{F_n}(T_F(K))}{|F_n|}=\lim_{n\to\infty}\frac{T_{F_nF}(K)}{|F_n|}\cdot\frac{|F_n|}{|F_nF|}=\ent_L(M,K)\, ,
$$
where the last equality comes from the fact that $\{F_nF\}_{n\in\N}$ is a F\o lner sequence (use the fact that, for any $C\subseteq \F(G)$ and $n\in\N$, one has the inclusion $\partial_{C}(F_nF)\subseteq \partial_{CF^{-1}}(F_n)$ and apply the F\o lner condition for $\{F_n\}_{n\in\N}$) and the definition of $\ent_L$ does not depend on the choice of a particular F\o lner sequence. Thus, $\ent_L(M,H)\leq \ent_L(M,K)$ for any finitely generated $H\leq M$; one concludes by Proposition \ref{upp-cont}.
\end{proof}

The upper continuity of $\ent_L$ can now be verified easily using the above lemma and Proposition \ref{upp-cont}:

\begin{corollary}\label{amen_upp}
The invariant $\ent_L:\lFin_L(\RG)\to\R_{\geq 0}\cup\{\infty\}$ is upper continuous. 
\end{corollary}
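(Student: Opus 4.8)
The plan is to reduce the statement to the two facts established just above, namely Lemma~\ref{fin} and Proposition~\ref{upp-cont}, together with the monotonicity recorded in Lemma~\ref{monotonie}. Since $\ent_L$ is manifestly isomorphism-invariant (it is built only from the $\RG$-module structure and from the values of the invariant $L$), the only thing to check is the continuity condition. So first I would fix $M\in\lFin_L(\RG)$ and a directed family $\{M_\alpha:\alpha\in\Lambda\}$ of $\RG$-submodules with $\sum_\Lambda M_\alpha=M$; because the family is directed, this sum is the direct union $\bigcup_\Lambda M_\alpha$. Write $\lambda\colon G\to\Aut(\Fin_L(M))$ for the induced action, and note that each $M_\alpha$ lies in $\lFin_L(\RG)$ because $\lFin(L)$ is closed under submodules, so $\ent_L(M_\alpha)$ is defined.

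The inequality $\sup_\Lambda\ent_L(M_\alpha)\le\ent_L(M)$ is immediate from Lemma~\ref{monotonie}(1). For the reverse inequality I would invoke the description of the entropy in terms of finitely generated $R$-submodules given by Proposition~\ref{upp-cont}(2): it suffices to prove $\ent_L(\lambda,K)\le\sup_\Lambda\ent_L(M_\alpha)$ for every finitely generated $_RK\le M$. Given such a $K$, choose finitely many $R$-generators $x_1,\dots,x_n\in M=\bigcup_\Lambda M_\alpha$; by directedness there is an index $\alpha$ with $x_1,\dots,x_n\in M_\alpha$, hence $K\le M_\alpha$, and therefore the $\RG$-submodule $N:=T_G(\lambda,K)$ generated by $K$ is also contained in $M_\alpha$. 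For every $F\in\F(G)$ the trajectory $T_F(\lambda,K)=\sum_{g\in F}\lambda_g(K)$ lies in $N$, so the value $\ent_L(\lambda,K)$ is unchanged whether computed inside $M$ or inside $N$; since $N=T_G(\lambda,K)$, Lemma~\ref{fin} then gives $\ent_L(N)=\ent_L(\lambda,K)$. Combining this with $N\le M_\alpha$ and Lemma~\ref{monotonie}(1) yields $\ent_L(\lambda,K)=\ent_L(N)\le\ent_L(M_\alpha)\le\sup_\Lambda\ent_L(M_\alpha)$. Taking the supremum over all finitely generated $_RK\le M$ and applying Proposition~\ref{upp-cont}(2) once more produces $\ent_L(M)\le\sup_\Lambda\ent_L(M_\alpha)$, which together with the first inequality finishes the argument.

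I do not anticipate a genuine obstacle here; the proof is essentially an assembly of earlier results. The only points requiring a little care are, first, the step from ``$K$ finitely generated over $R$'' to ``$K\le M_\alpha$ for some $\alpha$'', which uses directedness of the family and not merely that its sum equals $M$, and second, the bookkeeping of which ambient module each entropy is computed in, so that $\ent_L(\lambda,K)$, $\ent_L(N)$ and the restricted-action entropies all agree; this is exactly what makes Lemma~\ref{fin} applicable. If anything is the crux, it is that second observation that trajectories of $K$ never leave the $\RG$-submodule $K$ generates.
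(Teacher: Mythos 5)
Your proposal is correct and follows essentially the same route as the paper: it combines Proposition~\ref{upp-cont}(2) (reduction to finitely generated $R$-submodules), Lemma~\ref{fin} (entropy of the trajectory $T_G(\lambda,K)$), and Lemma~\ref{monotonie} (monotonicity). The only cosmetic difference is that the paper records the conclusion as the identity $\ent_L(M)=\sup\{\ent_L(N):N\text{ f.g.\ }\RG\text{-submodule}\}$ (appealing implicitly to the characterization of upper continuity in Proposition~\ref{lf}), whereas you unfold that implication explicitly for an arbitrary directed family of $\RG$-submodules.
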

\begin{proof}
The fact that $\ent_L$ is an invariant can be derived by the definition and the fact that $L$ is an invariant. Now, let $M\in \lFin_L(\RG)$, then by Proposition \ref{upp-cont} and Corollary \ref{fin} we get
\begin{align*}
\ent_L(_{\RG}M)&=\sup\{\ent_L(M,K):{K\text{ finitely generated $R$-submodule of $M$}}\}\\
&=\sup\{\ent_L(_{\RG}(T_G(K)):{K\text{ finitely generated $R$-submodule of $M$}}\}\\
&=\sup\{\ent_L(_{\RG}N):{N\text{ finitely generated $\RG$-submodule of $M$}}\}\, .
\qedhere\end{align*}
\end{proof}

Another consequence of Proposition \ref{upp-cont} is the monotonicity of entropy under taking quotients:

\begin{corollary}\label{monotonia_quozienti}
Let ${}_{\RG}M\in\lFin_L(\RG)$ and ${}_{\RG} N\leq M$. Then, $\ent_L(_{\RG }M)\geq \ent_L(_{\RG }(M/N))$.
\end{corollary}
\begin{proof}
Given a finitely generated submodule $\bar K\leq M/N$, there exists a finitely generated (thus $L$-finite) submodule $K\leq M$ such that $(K+N)/N\cong \bar K$. Given a F\o lner sequence $\{F_n\}_{n\in\N}$,
$$T_{F_n}(\bar K)=(T_{F_n}(K)+N)/N\ \text{ and so } L(T_{F_n}(\bar K))\leq L(T_{F_n}(K))\ \text{ for all $n\in\N$}\, .$$ 
Dividing by $|F_n|$ and passing to the limit we get $\ent_L(M/N,\bar K)\leq \ent_L(M,K)\leq \ent_L(_{\RG }M)$.
\end{proof}

We conclude this subsection with the following property that will be extremely useful in proving the additivity of entropy.

\begin{proposition}\label{miracolo}
Let $M\in\lFin_L(\RG)$ be finitely generated and let $\{N_i:i\in I\}$ be a directed system of $\RG$-submodules. Letting $N:=\bigcup_iN_i$,   $\ent_L(M/N)=\inf_{i}\ent_L(M/N_i)$.
\end{proposition}
\begin{proof}
The inequality ``$\leq$" follows by Corollary \ref{monotonia_quozienti}. On the other hand, let $K\in \Fin_L(M)$ be a finitely generated $R$-submodule such that $T_G(K)=M$; let also $\bar K:=(K+N)/N$ and $\bar K_i:=(K+N_i)/N_i$ for all $i\in I$. By Lemma \ref{fin}, $\ent_L(M/N)=\ent_L(M/N,\bar K)$ and $\ent_L(M/N_i)=\ent_L(M/N_i,\bar K_i)$, for all $i$.
Now fix (arbitrarily) a constant $\varepsilon\in (0,1/4)$ and notice that:
\begin{enumerate}[(a)] 
\item there exists $m\in\N$ such that  $\displaystyle	\frac{L(T_{F_n}(\bar K))}{|F_n|}\leq \ent_L(M/N)+\varepsilon$ for all $n\geq m$;
\item there exist $m\leq m_1\leq \dots\leq  m_k\in\N$ such that
$\displaystyle	\ent_L(M/N_i)\leq \varepsilon \cdot L(\bar K_i)+\frac{1}{1-\varepsilon}\cdot \max_{1\leq t\leq k} \frac{L(T_{F_{m_t}}(\bar K_i))}{|F_{m_t}|}$, for all $i\in I$, by Corollary \ref{leq_max};
\item for any $t\in \{1,\dots,k\}$ there exists $j_t\in I$ such that 
$\displaystyle	L(T_{F_{m_t}}(\bar K_{j}))\leq \inf_i L(T_{F_{m_t}}(\bar K_i)) +\varepsilon=L(T_{F_{m_t}}(\bar K)) +\varepsilon$, whenever $N_j\geq N_{j_t}$, where for the equality we use Proposition \ref{add_on_fg}.
\end{enumerate}
Since $\{N_i:i\in I\}$ is directed, there exists $\bar j\in I$ such that $\sum_{t=1}^kN_{j_t}\leq N_{\bar j}$. Thus,
\begin{align*}
\inf_{i}\ent_L(M/N_i)\leq \ent_L(M/N_{\bar j})&
\overset{(b)}\leq\varepsilon \cdot L(\bar K_{\bar j})+\frac{1}{1-\varepsilon}\cdot \max_{1\leq t\leq k} \frac{L(T_{F_{m_t}}(\bar K_{\bar j}))}{|F_{m_t}|}\\
&\overset{(c)}\leq \varepsilon \cdot L(\bar K_{\bar j})+\frac{1}{1-\varepsilon}\cdot \frac{L(T_{F_{m_t}}(\bar K))+\varepsilon}{|F_{m_t}|}\\
&\overset{(a)}\leq \varepsilon \cdot L(\bar K_{\bar j})+\frac{1}{1-\varepsilon}\cdot (\ent_L(M/N)+2\varepsilon)\, .
\end{align*}
As this holds for any small enough $\varepsilon>0$, the conclusion follows. 
\end{proof}

\subsection{The algebraic entropy is additive}\label{AT_subsec}

In this subsection we complete the proof of the fact that 
$$\ent_L\colon \lFin_L(\RG)\to \R_{\geq0}\cup\{\infty\}$$
is a length function. In fact we have already seen that $\ent_L$ is an upper continuous invariant. To verify the additivity of $\ent_L$, by Propositions \ref{add_on_fg} and \ref{miracolo}, it is enough to check that $\ent_L$ is additive on short exact sequences of finitely generated modules.\\ The computations in the following proposition are freely inspired to the proof of the Abramov-Rokhlin Formula given in \cite{WZ}. The context (and even the statements) in that paper is quite different but the ideas contained there can be perfectly adapted to our needs.

\begin{proposition}
Let $0\to N\to M\to M/N\to 0$ be a short exact sequence of finitely generated left $\RG$-modules, then $\ent_L\left({_{\RG}M}\right)= \ent_L\left({_{\RG}N}\right)+\ent_L\left({_{\RG}(M/N)}\right)$.
\end{proposition}
\begin{proof}
Fix a F\o lner exhaustion $\{F_n\}_{n\in\N}$ of $G$ and finitely generated $R$-submodules $K\leq M$ and $K'\leq K\cap N$ such that $T_G(K')=N$ and $T_G(K)=M$. If we let $\bar K:=(K+N)/N\leq M/N$, then $T_G(\bar K)=M/N$. By Lemma \ref{fin}, $\ent_L(M)=\ent_L(M, K)$,  $\ent_L(N)=\ent_L(N, K')$ and $\ent_L(M/N)=\ent_L(M/N, \bar K)$.\\ Let $\varepsilon\in(0,1/4)$, by the existence of the limits defining $L$-entropy, we can find $\bar n\in\N$ such that
\begin{align*}
\left|\frac{L(T_{F_n}(K))}{|F_n|}-\ent_L(M)\right|<\varepsilon\, , && \left|\frac{L(T_{F_n}(K'))}{|F_n|}-\ent_L(N)\right|<\varepsilon\, , && \left|\frac{L(T_{F_n}(\bar K))}{|F_n|}-\ent_L(M/N)\right|<\varepsilon\, ,
\end{align*} 
for all $n\geq\bar n$. Moreover, $L(T_{F_m}(K))= L(T_{F_m}( K'))+L(T_{F_m}(K)/T_{F_m}(K'))\geq L(T_{F_m}( K'))+L(T_{F_m}(\bar K))$, for all $m\in\N$,
and so, for all $m\geq \bar n$,
$$\ent_L(M)\geq \frac{L(T_{F_m}(K))}{|F_m|}-\varepsilon\geq   \frac{L(T_{F_m}(K'))}{|F_m|}+ \frac{L(T_{F_m}(\bar K))}{|F_m|}-\varepsilon\geq \ent_L(N)+\ent_L(M/N)-3\varepsilon\,,$$
yielding the inequality $\ent_L(M)\geq \ent_L(N)+\ent_L(M/N)$. For the converse inequality, notice that
\begin{align*}
\ent_L(M)&\leq \frac{L(T_{F_m}(K))}{|F_m|}+\varepsilon= \frac{L(T_{F_m}(K'))}{|F_m|}+ \frac{L(T_{F_m}(K)/T_m(K'))}{|F_m|}+\varepsilon\\
\notag&\leq \ent_L(N)+ \frac{L(T_{F_m}(K)/T_m(K'))}{|F_m|}+2\varepsilon\,.\end{align*}
To conclude we should verify that $(L(T_{F_m}(K)/T_m(K')))/|F_m|$ is close enough to $\ent_L(M/N)$. \\
Since $\{F_n\}$ is a F\o lner exhaustion, $N=\bigcup_{n\in\N}T_{F_n}(K')$ and so, for any $L$-finite submodule $H\leq M$, $L(H\cap N)=\lim_{n\to\infty}L(H\cap T_{F_n}(K'))$, by upper continuity. Using additivity, $L((H+N)/N)=\lim_{n\to\infty}L((H+T_{F_n}(K'))/T_{F_n}(K'))$, so that  $|L((H+N)/N)-L((H+T_{F_n}(K'))/T_{F_n}(K'))|<\varepsilon$,
for any big enough $n\in\N$. By Theorem \ref{esistenza_piastrelle}, there exist $\bar n<n_1<\dots<n_k\in \N$ and $\bar n'$ such that $\{F_{n_1},\dots,F_{n_k}\}$ $\varepsilon$-quasi-tiles $F_m$ for all $m\geq \bar n'$. Applying the above argument with $H=T_{F_{n_i}}(K)$ (for all $i=1,\dots, k$), we can find $\overline{\overline{n}}\geq \max\{\bar n',n_k\}$ such that
\begin{equation}\label{stima_1}\left|L\left(\frac{T_{F_{n_i}}(K)+T_{F_n}(K')}{T_{F_n}(K')}\right)- L(T_{F_{n_i}}(\bar K))\right|<\varepsilon\end{equation}
for all $n\geq \overline{\overline{n}}$ and all $i=1,\dots,k$. From now on we fix $m\geq \bar {\bar n}$ such that
$ |\partial_{F_{\bar{\bar{n}}}^{-1}F_{\bar{\bar{n}}}}(F_m)|/|F_m|\leq \varepsilon$.
Notice that, given $i\in\{1,\dots,k\}$, $F_{n_i}\subseteq F_{\bar{\bar n}}$, and so $\partial_{F_{\bar{\bar{n}}}}(F_m)F_{n_i}\subseteq \partial_{F_{\bar{\bar{n}}}^{-1}F_{\bar{\bar{n}}}}(F_m)$. Thus, 
$$|\partial_{F_{\bar{\bar{n}}}}(F_m)F_{n_i}|/|F_m|<\varepsilon\,, \ \ \text{ for all $i=1,\dots,k.$}$$
We choose tiling centers $C_1,\dots,C_k$ for $F_m$, so that
\begin{equation}\label{stime_2}
|F_m|\geq \left|\bigcup_{i=1}^kC_iF_{n_i}\right|\geq \max\left\{(1-\varepsilon)|F_m|\ ,\ (1-\varepsilon)\sum_{i=1}^k|C_i||F_{n_i}|\right\}\, ,
\end{equation}
Consider the following estimate:
\begin{align}\label{stima_nuova_1}
\frac{1}{|F_m|}L\left(\frac{T_{F_m\setminus \bigcup_{i}C_iF_{n_i}}(K)+T_{F_m}(K')}{T_{F_m}(K')}\right)&\leq \frac{L(T_{F_m\setminus \bigcup_{i}C_iF_{n_i}}(K))}{|F_m|}\\
\notag&\leq \frac{|F_m\setminus \bigcup_{i}C_iF_{n_i}|}{|F_m|}L(K)\leq \varepsilon L(K)\,,\end{align}
Given $i\in\{1,\dots,k\}$, using that $C_iF_{n_i}\subseteq (C_i\setminus \partial_{F_{\overline{\overline{n}}}}(F_m))F_{n_i}\cup \partial_{F_{\overline{\overline{n}}}}(F_m)F_{n_i}$, we get:
{\footnotesize\begin{align}\label{ultima_approx_AT}
\notag \frac{1}{|F_m|}
&L\left(
\frac{T_{C_iF_{n_i}}(K)+T_{F_m}(K')}{T_{F_m}(K')}\right)\leq\\
\notag&\leq \frac{1}{|F_m|}
\sum_{c\in C_i\setminus\partial_{F_{\overline{\overline{n}}}}(F_m)}
L\left(
\frac{T_{cF_{n_i}}(K)+T_{F_m}(K')}{T_{F_m}(K')}\right)+\frac{1}{|F_m|}L\left(\frac{T_{\partial_{F_{\overline{\overline{n}}}}(F_m)F_{n_i}}(K)+T_{F_m}(K')}{T_{F_{m}}(K')}\right)\\
\notag&{\leq} \frac{1}{|F_m|}\sum_{c\in C_i\setminus\partial_{F_{\overline{\overline{n}}}}(F_m)}L\left(\frac{T_{F_{n_i}}(K)+T_{c^{-1}F_m}(K')}{T_{c^{-1}F_m}(K')}\right)+\frac{| \partial_{F_{\overline{\overline{n}}}}(F_m)F_{n_i}|L(K)}{|F_m|}\\ 
\notag&\leq \frac{|C_i|}{|F_{m}|}L\left(\frac{T_{F_{n_i}}(K)+T_{F_{\overline{\overline{n}}}}(K')}{T_{F_{\overline{\overline{n}}}}(K')}\right)+\varepsilon L(K)\\
&\leq \frac{|C_i|L(T_{F_{n_i}}(\bar K))+|C_i|\varepsilon}{|F_{m}|}+\varepsilon L(K)\leq \frac{|C_i||F_{n_i}|}{|F_{m}|}\ent_L(M/N)+\varepsilon\left(\frac{|C_i||F_{n_i}|}{|F_{m}|}+\frac{|C_i|}{|F_{m}|}+L(K)\right)
\end{align}}
where the second inequality follows by the compatibility of $L$ with $\RG$, the third one is true since $F_{\overline{\overline{n}}}\subseteq c^{-1}F_m$ for all $c\in C_i\setminus\partial_{F_{\overline{\overline{n}}}}(F_m)$, and the fourth one follows by \eqref{stima_1}.\\
Let us assemble  together the above computations:
{\footnotesize\begin{align*}
\frac{L(T_{F_m}(K)/T_{F_m}(K'))}{|F_m|}&\leq 
\frac{1}{|F_m|}
L\left(
\frac{T_{\bigcup_{i}C_iF_{n_i}}(K)+T_{F_m}(K')}{T_{F_m}(K')}\right)
+\frac{1}{|F_m|}L\left(
\frac{T_{F_m\setminus \bigcup_{i}C_iF_{n_i}}(K)+T_{F_m}(K')}{T_{F_m}(K')}\right)\\
&\overset{\eqref{stima_nuova_1}}{\leq} 
\frac{1}{|F_m|}\sum_{i=1}^kL\left(
\frac{T_{C_iF_{n_i}}(K)+T_{F_m}(K')}{T_{F_m}(K')}\right)+\varepsilon L(K)\\
&\overset{\eqref{ultima_approx_AT}}{\leq} \sum_{i=1}^k\left(\frac{|C_i||F_{n_i}|}{|F_{m}|}\ent_L(M/N)+\varepsilon\left(\frac{|C_i||F_{n_i}|}{|F_{m}|}+\frac{|C_i|}{|F_{m}|}+L(K)\right)\right)
+\varepsilon L(K)\\
&\leq \frac{\ent_L(M/N)}{1-\varepsilon}+\varepsilon \left(\frac{2}{1-\varepsilon}+(k+1)L(K)\right)\,.
\end{align*}  }
Hence, we have obtained that
$$\ent_L(M)\leq \ent_L(N)+\frac{\ent_L(M/N)}{1-\varepsilon}+\varepsilon \left(\frac{2}{1-\varepsilon}+(k+1)L(K)\right)\,,$$
holds for any $\varepsilon\in(0,1/4)$, and so $\ent_L(M)\leq \ent_L(N)+\ent_L(M/N)$, concluding the proof.
\end{proof}

\subsection{Values on (sub)shifts}\label{shift_subsec}

This subsection is devoted to compute the values of the entropy on the left $\RG$-modules of the form $M=\RG\otimes_R K$, for some left $R$-module $K$, and their $\RG$-submodules. This will conclude the proof of Theorem B. 

\begin{definition}
A left $\RG$-module of the form $M=\RG\otimes_RK$ is said to be a {\em Bernoulli shift} while any of its $\RG$-submodules is a {\em subshift}.
\end{definition}

Notice that, given a Bernoulli shift $M=\RG\otimes_RK$, there is a direct sum decomposition, as a left $R$-module, $_{R}M\cong \bigoplus_{g\in G}\underline gK$. Thus, for any $F\in \F(G)$, 
\begin{equation}\label{prop_bernoulli}T_F(\underline gK)=\bigoplus_{h\in F}\underline {hg} K\ \ \ \text{ and }\ \ \ T_G(\underline g K)=M\,.\end{equation}

In the following example we compute the algebraic entropy of Bernoulli shifts.
\begin{example}\label{Bernoulli}
In the above notation, suppose $L(K)<\infty$. By Lemma \ref{fin} and \eqref{prop_bernoulli}, we obtain that $\ent_L({_{\RG}M})=\ent_L(M,K)$. Furthermore, again by \eqref{prop_bernoulli}, $L(T_F(K))/|F|=L(K)$,  for all $F\in \F(G)$. Therefore, $\ent_L(M)=L(K)$.
\end{example}

The computation in the above example shows that the entropy of $M=\RG\otimes_RK$ is $0$ if and only if $L(K)=0$, if and only if $L(M)=0$. Our next goal is to show that, if $_{\RG}N$ is a subshift of $M$, then $\ent_L(_{\RG}N)=0$ if and only if $L({}_RN)=0$. This will be proved in Proposition \ref{ent>>0} but first we need to recall some useful terminology and results from \cite{Silberstein}.
\begin{definition}
Let $E$ and $F$ be subsets of $G$. A subset ${\cal N} \subseteq G$ is an {\em $(E, F )$-net} if it satisfies the following conditions:
\begin{enumerate}[\rm (1)]
\item the subsets $(gE)_{g\in \cal N}$ are pairwise disjoint, that is, $gE \cap g'E = \emptyset$ for all $g\neq  g'\in {\cal N}$;
\item $G =\bigcup_{g\in {\cal N}}gF$.
\end{enumerate}
\end{definition}

The following lemma is a variation of \cite[Lemmas 2.2 and 4.3]{Silberstein}.
%

\begin{lemma}\label{AAAAAAAA}
Let $\{F_n\}_{n\in\N}$ be a F\o lner sequence, $E\subseteq F\subseteq G$ be finite subsets with $e\in F$ and $\cal N$ an $(E,F)$-net. Then,
\begin{enumerate}[\rm (1)]
\item  there exists an $(E,EE^{-1})$-net;
\item there exist $\alpha\in (0,1]$ and $n_0\in\N$ such that $|F_n\cap {\cal N}|\geq \alpha\cdot |F_n|$, for all $n>n_0$.
\end{enumerate}
\end{lemma}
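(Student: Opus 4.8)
The plan is to prove the two parts of Lemma~\ref{AAAAAAAA} essentially by adapting the arguments of \cite{Silberstein}, using only the F\o lner condition and elementary combinatorics of the net structure. For part~(1), I would note that $EE^{-1}$ is a finite subset containing $e$ (since $E$ is finite and nonempty, as it contains $e$ in the intended application, or more carefully since $E\subseteq F\ni e$). The key observation is the standard fact that a maximal $E$-separated subset is automatically an $(E,EE^{-1})$-net: take $\cal M$ to be a subset of $G$ maximal with respect to the property that the translates $(gE)_{g\in\cal M}$ are pairwise disjoint (such a maximal set exists by Zorn's lemma). Disjointness of $(gE)_{g\in\cal M}$ is the first net condition. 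For the second, given any $h\in G$, by maximality $hE$ must meet some $gE$ with $g\in\cal M$ (otherwise $\cal M\cup\{h\}$ would still have the disjointness property), so $hx = gy$ for some $x,y\in E$, whence $h = gyx^{-1}\in g\,EE^{-1}$; thus $G = \bigcup_{g\in\cal M}g\,EE^{-1}$, giving the second net condition. So $\cal M$ is the desired $(E,EE^{-1})$-net. (Note this part does not even use the hypothesis that an $(E,F)$-net $\cal N$ is given.)

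For part~(2), the idea is that an $(E,F)$-net is ``uniformly spread out'': because the translates $gE$, $g\in\cal N$, are disjoint, a F\o lner set $F_n$ cannot contain too many net points relative to its size; and because the translates $gF$ cover $G$, it must contain at least a fixed positive proportion. The lower bound is the relevant one here. First I would fix $C = F\cup F^{-1}$ (or just work with $F$ and $\In$/$\Out$ with respect to $F$) and consider, for each $n$, the ``interior'' points $g\in F_n$ with $gF\subseteq F_n$, i.e.\ $\In_F(F_n)$. Since $G = \bigcup_{g\in\cal N}gF$, every point of $\In_F(F_n)$ lies in some $gF$ with $g\in\cal N$; and for such $g$ one has $gF\cap F_n\neq\emptyset$, so, after a harmless enlargement, $g\in\Out_F(F_n)$ lies in a fixed-size thickening of $F_n$. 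Counting: $|\In_F(F_n)|\le \big|\bigcup_{g\in\cal N\cap\Out_F(F_n)}gF\big|\le |F|\cdot|\cal N\cap \Out_F(F_n)|$. Using the F\o lner condition, $|\In_F(F_n)|\ge (1-\varepsilon_n)|F_n|$ and $|\Out_F(F_n)|\le (1+\varepsilon_n)|F_n|$ with $\varepsilon_n = |\partial_F(F_n)|/|F_n|\to 0$; therefore $|\cal N\cap\Out_F(F_n)|\ge \frac{1-\varepsilon_n}{|F|}|F_n|$. Then I would transfer this estimate from $\Out_F(F_n)$ back to $F_n$ itself, at the cost of another boundary term: $|\cal N\cap F_n|\ge |\cal N\cap \Out_F(F_n)| - |\Out_F(F_n)\setminus F_n|\ge |\cal N\cap \Out_F(F_n)| - \varepsilon_n'|F_n|$ for a suitable boundary ratio $\varepsilon_n'\to 0$. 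Combining, $|\cal N\cap F_n|\ge \big(\frac{1-\varepsilon_n}{|F|}-\varepsilon_n'\big)|F_n|$, and since the bracket tends to $\frac{1}{|F|}>0$, there is $n_0$ and $\alpha\in(0,1]$ (e.g.\ $\alpha = \frac{1}{2|F|}$) with $|F_n\cap\cal N|\ge\alpha|F_n|$ for all $n>n_0$.

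The main obstacle I anticipate is bookkeeping the various boundary corrections cleanly: one has to be careful about which set ($E$, $F$, $EE^{-1}$, or their inverses) is used in the F\o lner quotient $|\partial_C(F_n)|/|F_n|$, and to make sure that the translated sets $gF$ hitting $F_n$ really are confined to a controlled neighbourhood of $F_n$ whose excess over $F_n$ is of boundary order. None of the individual steps is deep — it is the standard ``net points are asymptotically a positive fraction of any F\o lner set'' argument — but getting the inequalities to chain together with the right finite constants and the right $\varepsilon_n$'s requires attention. It may be cleanest to prove once and for all a small auxiliary estimate of the form: for any finite $C\ni e$, $\big||F_n\cap X| - |CF_n\cap X|\big|\le |\partial_{C}(F_n)|$ for any $X\subseteq G$, or similar, and then apply it mechanically; this would isolate the only place the F\o lner condition is invoked and make the proportionality constant $\alpha$ (depending only on $|F|$) transparent.
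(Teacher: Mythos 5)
Your proof of part~(2) is correct and follows essentially the same route as the paper: both arguments exploit the covering $G=\bigcup_{g\in\mathcal N}gF$ to produce the lower bound $|F_n|/|F|$ on the number of net points in $\Out_F(F_n)$, and then peel off a boundary term of order $|\partial_F(F_n)|$ to pass from $\Out_F(F_n)$ down to $F_n$; the paper phrases this by introducing $F_n^{+F}=\Out_F(F_n)\cap\mathcal N$ and using $F_n^{+F}\setminus(F_n\cap\mathcal N)\subseteq\partial_F(F_n)$ directly, whereas you first shrink to $\In_F(F_n)$ and then expand back, which costs one extra boundary correction but is equally valid. The only genuine difference is in part~(1): the paper simply cites \cite[Lemma~2.2]{Silberstein}, while you give the standard self-contained proof via a maximal $E$-separated subset (Zorn's lemma), which is a correct and complete argument and does not change the substance of the lemma.
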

\begin{proof}
Part (1) is proved in \cite[Lemma 2.2]{Silberstein}, so let us concentrate on part (2). For each $n\in\N$, let $F_n^{+F}=\Out_F(F_n)\cap \cal N$ and notice that $F_n^{+F}\setminus (F_n\cap {\cal N}) \subseteq \partial_F(F_n)$. Furthermore, since $F_n$ is covered by the sets $gF$, $g \in F_n^{+F}$, we have $|F_n| \leq |F| á |F_n^{+F}|$. Let now $\alpha_1=1/|F|$, thus
\begin{align*}
\alpha_1|F_n|-|F_n\cap {\cal N}|\leq |F_n^{+F}|-|F_n\cap {\cal N}|\leq |F_n^{+F}\setminus (F_n\cap {\cal N})|\leq \partial_F(F_n)\, .
\end{align*}
Let $\alpha_2\in (0,\alpha_1)$. By the F\o lner condition, there exists $n_0\in\N$ such that $|\partial_F(F_n)|/|F_n|\leq \alpha_2$ for all $n>n_0$. Thus, letting $\alpha=\alpha_1-\alpha_2\in (0,1]$, we get $|F_n\cap {\cal N}|\geq \alpha_1|F_n|- \partial_F(F_n)\geq \alpha |F_n|$, for all $n>n_0$.
\end{proof}

\begin{proposition}\label{ent>>0}
Let $K$ be an $L$-finite left $R$-module and $_{\RG}N$  a subshift of $M=\RG\otimes_RK$. Then, 
$$\ent_L({}_{\RG}N)=0\  \ \text{ if and only if }\ \ L(_RN)=0\,.$$
\end{proposition}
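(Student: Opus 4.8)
The plan is to prove the two implications separately; the substantial one is that $L(_RN)\neq 0$ forces $\ent_L(_{\RG}N)>0$.

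\emph{The direction $L(_RN)=0\Rightarrow\ent_L(_{\RG}N)=0$.} Here I would simply apply Lemma \ref{zi}(2) to the $\RG$-submodule $N\leq N$, noting that $N\in\lFin_L(\RG)$ since it is a submodule of the locally $L$-finite module $M=\RG\otimes_RK$; as $L(_RN)=0$ this gives $\ent_L(_{\RG}N)=\ent_L(_{\RG}(N/N))=\ent_L(0)=0$. Alternatively, every finitely generated $_RH\leq N$ has $L(H)=0$ by monotonicity, whence $L(T_{F_n}(\lambda,H))\leq\sum_{g\in F_n}L(\lambda_g H)=0$ for any Følner set $F_n$, so $\ent_L(\lambda,H)=0$ and one concludes by Proposition \ref{upp-cont}(2).

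\emph{The direction $L(_RN)\neq 0\Rightarrow\ent_L(_{\RG}N)>0$.} Using that $\lmod R$ is locally finitely generated together with the upper continuity of $L$ (Proposition \ref{lf}), I would first pick a finitely generated $R$-submodule $H\leq N$ with $L(H)>0$. Writing $_RM\cong\bigoplus_{g\in G}\underline g K$, this $H$ lies inside $\bigoplus_{g\in E}\underline g K$ for some finite nonempty $E\subseteq G$ (each of its finitely many generators having finite support), so $L(H)\leq|E|\cdot L(K)<\infty$ and hence $H\in\Fin_L(N)$. The key geometric point is that $\lambda_g$ permutes the summands of $\bigoplus_{h\in G}\underline h K$ by $\underline h K\mapsto\underline{gh}K$, so that $\lambda_g(H)\subseteq\bigoplus_{h\in gE}\underline h K$ for every $g\in G$. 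Now fix a Følner sequence $\{F_n\}_{n\in\N}$ and, for each $n$, choose a maximal subset $S_n\subseteq F_n$ for which the sets $\{gE:g\in S_n\}$ are pairwise disjoint (equivalently, intersect $F_n$ with an $(E,EE^{-1})$-net in the spirit of Lemma \ref{AAAAAAAA}). Maximality forces $F_n\subseteq\bigcup_{g\in S_n}gEE^{-1}$, hence $|S_n|\geq|F_n|/|EE^{-1}|=:\alpha|F_n|$ with $\alpha>0$ independent of $n$. Since the blocks $\{\,\bigoplus_{h\in gE}\underline h K:g\in S_n\}$ are independent inside $\bigoplus_{h\in G}\underline h K$, the submodules $\{\lambda_g(H):g\in S_n\}$ form an internal direct sum, and additivity of $L$ together with $L(\lambda_g H)=L(H)$ (compatibility of $L$ with $\RG$) gives
$$L\Big(\sum_{g\in S_n}\lambda_g(H)\Big)=\sum_{g\in S_n}L(\lambda_g(H))=|S_n|\cdot L(H)\geq\alpha|F_n|\cdot L(H)\,.$$
As $\sum_{g\in S_n}\lambda_g(H)\leq T_{F_n}(\lambda,H)$, monotonicity of $L$ (Lemma \ref{basic_add}(1)) yields $L(T_{F_n}(\lambda,H))\geq\alpha|F_n|\cdot L(H)$, so $\ent_L(\lambda,H)=\lim_{n\to\infty}L(T_{F_n}(\lambda,H))/|F_n|\geq\alpha\,L(H)>0$ (the limit existing by the Ornstein--Weiss Lemma); since $H\in\Fin_L(N)$ this forces $\ent_L(_{\RG}N)\geq\ent_L(\lambda,H)>0$.

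I expect the only delicate point to be the independence step: one must check that the translated submodules $\lambda_g(H)$, for $g$ ranging over the disjoint family $S_n$, genuinely sit in pairwise independent coordinate blocks of $M=\bigoplus_{h\in G}\underline h K$, so that applying additivity of $L$ really produces a quantity growing linearly in $|F_n|$ — this is exactly what disjointness of the sets $gE$ buys, and it is where the shift structure $M=\RG\otimes_RK$ (as opposed to an arbitrary locally $L$-finite module) is used. Everything else is a routine combination of the monotonicity and additivity already established for $\ent_L$ with the elementary packing estimate for $F_n$.
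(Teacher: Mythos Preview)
Your proof is correct and follows essentially the same strategy as the paper's: pick a submodule of $N$ with positive $L$-value and finite ``support'' $E$ in the decomposition $M=\bigoplus_{g\in G}\underline g K$, then exploit that translates by elements whose $E$-shifts are disjoint sit in independent coordinate blocks, giving a linear lower bound on $L(T_{F_n}(\lambda,H))$. The paper uses a cyclic $Rx$ where you use a finitely generated $H$, and it appeals to a global $(E,EE^{-1})$-net together with Lemma~\ref{AAAAAAAA}(2) (which needs the F\o lner condition to bound $|F_n\cap\mathcal N|$ from below for large $n$), whereas your direct maximal-packing argument inside each $F_n$ yields $|S_n|\geq|F_n|/|EE^{-1}|$ for \emph{all} $n$ without invoking F\o lner at that step---a small but genuine simplification. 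One cosmetic remark: your parenthetical ``equivalently, intersect $F_n$ with an $(E,EE^{-1})$-net'' is not literally equivalent to choosing a maximal packing in $F_n$, but since you carry out the maximal-packing estimate explicitly this does no harm.
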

\begin{proof}
Suppose $L(_RN)\neq0$, then there exists $x\in N$ such that $L(Rx)\neq 0$. Let $E$ be the set of all elements $h\in G$ such that, writing $x=\sum_{g\in G}\underline g x_g$, the component $x_h$ is not $0$. We fix an $(E,EE^{-1})$-net $\cal N$. Notice  that, given $f_1\neq f_2\in \cal N$, then $\beta_{f_1}(Rx)\cap \beta_{f_2}(Rx)=0$. Thus, by Lemma \ref{AAAAAAAA}, we can find $n_0\in\N$ and $\alpha\in(0,1)$ such that
$$L(T_{F_n}(Rx))\geq L(T_{F_n\cap {\cal N}}(Rx))=|F_n\cap {\cal N}|L(Rx)\geq \alpha|F_n|L(Rx)$$
for all $n>n_0$. In particular, $\ent_L(_{\RG}N)\geq \ent_L(N,Rx)\geq \alpha L(Rx)\neq 0$.
\end{proof}

\section{Applications}\label{Kap_and_rel}

\subsection{Stable finiteness}
Let $R$ be a ring. Recall from the introduction that a left $R$-module $M$ is said to be {\em hopfian} if any of its surjective endomorphisms is bijective. 
 Recall also that a ring $R$ is {\em directly finite} if $xy=1$ implies $yx=1$ for all $x,y\in R$. Furthermore, $R$ is {\em stably finite} if the ring  $\Mat_k(R)$ of $k\times k$ square matrices with coefficients in $R$, is directly finite for all $k\in \N_+$.\\ The following lemma gives a natural connection between stable finiteness and Hopficity:
\begin{lemma}\label{hopf=df}
Given a ring $R$ and a positive integer $k$, the following are equivalent:
\begin{enumerate}[\rm (1)]
\item $R^k_R$ is hopfian (as a right $R$-module);
\item ${}_RR^k$ is hopfian (as a left $R$-module);
\item $\Mat_k(R)$ is directly finite.
\end{enumerate}
\end{lemma}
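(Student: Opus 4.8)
The statement is a standard triple equivalence, and the plan is to prove it by establishing the cycle (1)$\Leftrightarrow$(3) and (2)$\Leftrightarrow$(3), exploiting the symmetry between left and right modules via the anti-isomorphism $R\to R^{\mathrm{op}}$. First I would identify $\End_R(R^k_R)$ with $\Mat_k(R)$ acting by left multiplication on column vectors (and, dually, $\End_{R^{\mathrm{op}}}(\,{}_RR^k)\cong\Mat_k(R)^{\mathrm{op}}$ acting on rows), so that a surjective endomorphism of $R^k_R$ corresponds precisely to a matrix $A\in\Mat_k(R)$ that admits a right inverse, i.e.\ $AB=I_k$ for some $B$; and such an endomorphism is bijective exactly when $A$ is a two-sided unit, i.e.\ also $BA=I_k$. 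Thus ``$R^k_R$ is hopfian'' unwinds to the assertion ``$AB=I_k\Rightarrow BA=I_k$ for all $A,B\in\Mat_k(R)$'', which is literally the direct finiteness of $\Mat_k(R)$. This gives (1)$\Leftrightarrow$(3) once I check the one subtle point below.

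The one subtlety is the identification of \emph{surjective} endomorphisms of $R^k_R$ with matrices having a right inverse: surjectivity of $\phi_A\colon v\mapsto Av$ means every standard basis vector $e_i$ lies in the image, i.e.\ there are columns $b_i$ with $Ab_i=e_i$; assembling the $b_i$ into a matrix $B$ gives $AB=I_k$. Conversely $AB=I_k$ makes $\phi_A$ split epi, hence epi. So ``every surjective endomorphism is an isomorphism'' is equivalent to ``every right-invertible matrix is invertible'', and since a right inverse of an invertible element is automatically the two-sided inverse, this is exactly $xy=1\Rightarrow yx=1$ in $\Mat_k(R)$. For (2)$\Leftrightarrow$(3) I would run the same argument on the left, noting $\End({}_RR^k)\cong\Mat_k(R)^{\mathrm{op}}$, or simply observe that $\Mat_k(R)$ is directly finite if and only if $\Mat_k(R)^{\mathrm{op}}=\Mat_k(R^{\mathrm{op}})$ is (direct finiteness is a self-dual condition: $xy=1\Leftrightarrow yx=1$ in $R$ is the same statement read in $R^{\mathrm{op}}$), and then apply the right-module case of the equivalence over $R^{\mathrm{op}}$.

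There is no real obstacle here; the ``hard part'' is only bookkeeping — being careful about whether matrices act on the left or the right and about the op-ring conventions, and spelling out that a one-sided inverse in a monoid, once the other one-sided inverse exists, must coincide with it (from $AB=I$ and $BA=I$ nothing more is needed, and from $AB=I$ alone together with direct finiteness one gets $BA=I$). I would therefore write the proof as: (i) fix the action conventions and the isomorphism $\End_R(R^k_R)\cong\Mat_k(R)$; (ii) translate ``hopfian'' into ``right-invertible $\Rightarrow$ invertible'' in $\Mat_k(R)$; (iii) note this is verbatim direct finiteness of $\Mat_k(R)$, giving (1)$\Leftrightarrow$(3); (iv) deduce (2)$\Leftrightarrow$(3) by passing to $R^{\mathrm{op}}$ and using self-duality of direct finiteness. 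This keeps the argument to a short paragraph of genuinely routine verifications.
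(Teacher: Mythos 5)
Your argument is correct, and it is the standard one. Note that the paper states this lemma without proof (it is treated as a well-known fact), so there is no authorial argument to compare against; your write-up — identifying $\End_R(R^k_R)$ with $\Mat_k(R)$, unwinding "hopfian" to "$AB=I_k\Rightarrow BA=I_k$," and disposing of the left-module case via the self-duality of direct finiteness under passage to $R^{\mathrm{op}}$ — is exactly the routine verification one would supply, and every step checks out.
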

%
%

As we said, our interest in Hopficity is motivated by the Stable Finiteness Conjecture, stating that for any field $\K$ and any group $G$, the group ring $\K[G]$ is stably finite. 
%
%
%
%
%
%
%
In fact, a fairly general case of this conjecture was recently verified by Elek and Szab\'o \cite{Elek} that proved that $\K[G]$ is stably finite for any division ring $\K$ and any sofic group $G$. A straightforward consequence is that, under these hypotheses, $\Mat_{n}(\K[G])$ is stably finite. Now, by the Artin-Wedderburn Theorem, given a semisimple Artinian ring $R$, there exist positive integers $k,$ $n_1,\dots,n_k\in\N_+$ and division rings $\K_1,\dots,\K_{k}$ such that $R\cong \Mat_{n_1}(\K_1)\times\dots\times \Mat_{n_k}(\K_k)$. This implies that, $R[G]\cong \Mat_{n_1}(\K_1[G])\times\dots\times \Mat_{n_k}(\K_k[G])$, thus a consequence of the above theorem is that $R[G]$ is stably finite whenever $R$ is semisimple Artinian and $G$ sofic. This result can be further generalized as follows:

\begin{remark}\label{ferran}[Ferran Ced\'o, private communication (2012)]
If $R$ is a ring with left Krull dimension (for example see \cite{McRobson}) and $G$ is sofic, then $R[G]$ is stably finite.  First of all, notice that, if $I$ is a nilpotent ideal of $R$, then $I[G]=R[G]I$ is a nilpotent ideal of $R[G]$ and so one can reduce the problem modulo nilpotent ideals. Now, by \cite[Corollary 6.3.8]{McRobson}, the prime radical $N$ of $R$ is nilpotent and $N=P_1\cap\dots\cap P_m$, where $P_1,\dots,P_m$ are minimal prime ideals. Thus, by \cite[Proposition 6.3.5]{McRobson}, $R/N$ is a semiprime Goldie ring and so, by \cite[Theorem 2.3.6]{McRobson}  $R/N$ has a classical semisimple Artinian ring of quotients $S$. In particular, $(R/N)[G]$ embeds in $S[G]$ and  it is therefore stably finite. 
\end{remark}

Both the proof of the residually amenable case due to Ara, O'Meara and Perera, and the proof of the sofic case due to Elek and Szab\'o, consist in finding a suitable embedding of $\K[G]$ in a ring which is known to be stably finite. Such methods are really effective but, as far as we know, cannot be used to obtain information on the modules over $\K[G]$. It seems natural to ask the following
\begin{question}\label{quest_hopf}
Let $G$ be a group, $R$ a ring and let $\RG$ be a fixed crossed product. Given a finitely generated left $\RG$-module ${}_{\RG}M$, when is $_{\RG}M$ an hopfian module?
\end{question}

Using the theory of algebraic entropy we can now prove that a large class of left $\RG$-modules is hereditarily Hopfian (i.e., any submodule is Hopfian), in case $R$ is left Noetherian and $G$ amenable (see the statement of Theorem A). We remark that this is a very strong version of Kaplasky's Stable Finiteness Conjecture in the amenable case, which can be re-obtained as a corollary. The proof of Theorem A makes use of the full force of the localization techniques introduced in Section \ref{LF}. Such heavy machinery hides in some sense the idea behind the proof;  this is the reason for which we prefer to give first the proof of the following more elementary statement, whose proof is far more transparent:

\begin{lemma}
Let $\K$ be a division ring, let $G$ be a finitely generated amenable group and fix a crossed product $\K\asterisk G$. For all $n\in\N_+$, $(\K\asterisk G)^n=\K\asterisk G\otimes \K^n$ is a hereditarily hopfian left $\K\asterisk G$-module.
\end{lemma}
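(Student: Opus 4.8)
The plan is to use the algebraic $L$-entropy with $L=\dim_{\K}$, which is a discrete length function on $\lmod{\K}$ and is automatically compatible with any crossed product $\K\asterisk G$ (since every $\sigma(g)$ is a ring automorphism of $\K$ and $\dim_{\K}$ is invariant under scalar restriction along automorphisms). Set $M=\K\asterisk G\otimes_{\K}\K^n$, a Bernoulli shift; by Example \ref{Bernoulli} we have $\ent_{\dim_\K}({}_{\K\asterisk G}M)=\dim_\K(\K^n)=n<\infty$. The key point is that every submodule $N\leq M$ is a subshift, so by Theorem \ref{main_ent}(1)--(4) the entropy $\ent_{\dim_\K}$ is a (finite-valued, on $\lFin_{\dim_\K}(\K\asterisk G)$) length function that is additive, upper continuous, and satisfies $\ent_{\dim_\K}(N)=0$ iff $\dim_\K({}_\K N)=0$ iff $N=0$.

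First I would record the following general principle: if $i$ is a length function on some abelian category, $\phi\colon X\to X$ is a surjective endomorphism, and $i(X)<\infty$, then $\phi$ is injective. Indeed the short exact sequence $0\to\ker\phi\to X\to X\to 0$ gives $i(X)=i(\ker\phi)+i(X)$, hence $i(\ker\phi)=0$ because $i(X)$ is finite and can be cancelled; so it remains to pass from $i(\ker\phi)=0$ to $\ker\phi=0$. In our situation $\ker\phi$ is an $\K\asterisk G$-submodule of $M$ (hence a subshift), and $\ent_{\dim_\K}(\ker\phi)=0$ forces $\dim_\K({}_\K\ker\phi)=0$ by Proposition \ref{ent>>0}, i.e.\ $\ker\phi=0$. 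Thus $\phi$ is bijective.

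Second I would reduce hereditary hopficity to the above: let $N\leq M$ be an arbitrary $\K\asterisk G$-submodule and let $\phi\colon N\to N$ be a surjective $\K\asterisk G$-endomorphism. By Lemma \ref{monotonie}(1), $\ent_{\dim_\K}({}_{\K\asterisk G}N)\leq\ent_{\dim_\K}({}_{\K\asterisk G}M)=n<\infty$, so $N$ lies in the domain $\lFin_{\dim_\K}(\K\asterisk G)$ where $\ent_{\dim_\K}$ is a genuine length function (additive by Theorem \ref{main_ent}(4)). Applying the principle of the previous paragraph with $i=\ent_{\dim_\K}$, $X=N$: additivity on $0\to\ker\phi\to N\to N\to 0$ gives $\ent_{\dim_\K}(\ker\phi)=0$, and then $\ker\phi=0$ as above. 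Hence every submodule of $M$ is hopfian, i.e.\ $M$ is hereditarily hopfian.

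The only slightly delicate step is the cancellation $\ent_{\dim_\K}(N)=\ent_{\dim_\K}(\ker\phi)+\ent_{\dim_\K}(N)\Rightarrow\ent_{\dim_\K}(\ker\phi)=0$, which is legitimate precisely because $\ent_{\dim_\K}(N)$ is finite; this is why restricting attention to subshifts of the finitely generated Bernoulli shift $\K\asterisk G\otimes_\K\K^n$ (so that the ambient entropy is finite) is essential, and also why the hypothesis that $\K$ is a division ring (making $\dim_\K$ discrete and finite on $\K^n$) matters. Everything else is a direct citation of Theorem \ref{main_ent} and Proposition \ref{ent>>0}.
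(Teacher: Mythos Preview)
Your proof is correct and follows essentially the same route as the paper: both arguments apply the $\dim_{\K}$-entropy to the short exact sequence $0\to\ker\phi\to N\to N\to 0$ for an arbitrary submodule $N\leq(\K\asterisk G)^n$, use additivity together with the bound $\ent_{\dim_\K}(N)\leq\ent_{\dim_\K}(\K\asterisk G\otimes_\K\K^n)=n<\infty$ to cancel and obtain $\ent_{\dim_\K}(\ker\phi)=0$, and then invoke Proposition~\ref{ent>>0} to conclude $\ker\phi=0$. Your write-up is a bit more explicit about the cancellation step and about why $\dim_\K$ is compatible with any crossed product, but the argument is the same.
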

\begin{proof}
Let $n\in\N_+$ and choose $\K\asterisk G$-submodules $N\leq M\leq (\K\asterisk G)^n$ such that there exists a short exact sequence
$$0\to N\to M\to M\to 0\, ,$$
we have to show that $N=0$. The length function $\dim:\lmod \K\to \R_{\geq0}\cup\{\infty\}$ is compatible with any crossed product, so we can consider the $\dim$-entropy of left ${\K\asterisk G}$-modules. 
In particular, 
$$\ent_{\dim}(M)=\ent_{\dim}(M)+\ent_{\dim}(N)\ \ \ \text{ and }\ \ \ 0\leq\ent_{\dim}(N)\leq \ent_{\dim}(M)\leq \ent_{\dim}(\K\asterisk G\otimes\K^n)=n\,.$$ 
Thus, $\ent_{\dim}(N)=0$. By Proposition \ref{ent>>0}, this implies that $\dim (N)=0$, that is, $N=0$.
\end{proof}

The same argument of the above proof can be used to prove Theorem A, modulo the fundamental tool of Gabriel dimension:

\begin{proof}[Proof of Theorem A]
Consider a left $\RG$-submodule $M\leq \RG\otimes_RK$ and a short exact sequence of left $\RG$-modules
$$0\to \ker(\phi)\to M\overset{\phi}{\longrightarrow} M\to 0\, .$$
In order to go further with the proof we need to show that, as a left $R$-module, the Gabriel dimension of $\ker(\phi)$ is a successor ordinal whenever it is not $-1$ (i.e., whenever $\ker(\phi)\neq 0$). This follows by the following 
\begin{enumerate}
\item[] \begin{lemma}
In the hypotheses of Theorem A, $\Gdim({}_RN)$ is a successor ordinal for any non-trivial $R$-submodule $N\leq \RG\otimes_RK$.
\end{lemma}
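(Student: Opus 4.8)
The goal is to show that for a left Noetherian ring $R$, a finitely generated left $R$-module $K$, and any crossed product $\RG$, every nonzero $R$-submodule $N$ of $\RG\otimes_R K$ has Gabriel dimension a successor ordinal. The key structural fact available is Corollary \ref{cor_noeth}, which says that in a Gabriel category a \emph{Noetherian} object has successor Gabriel dimension; so the whole point is to reduce the statement about the possibly-huge module $N$ to a statement about Noetherian submodules and then invoke upper continuity of $\Gdim$ along directed unions together with part (3) of Lemma \ref{pre1}.

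First I would record that $\lmod R$ is a Gabriel category because $R$ is left Noetherian (this is the content of the second part of the module example after Lemma \ref{pre1}), so that $\Gdim$ is defined on all of $\lmod R$ and all the machinery of Section \ref{LF} applies. Next, as a left $R$-module, $\RG\otimes_R K\cong\bigoplus_{g\in G}\underline g K$; since $K$ is finitely generated over the Noetherian ring $R$, each summand $\underline g K\cong F_{\sigma(g)}(K)$ is a Noetherian $R$-module, and $\Gdim(\underline g K)=\Gdim(K)$ because the self-equivalence $F_{\sigma(g)}$ preserves the Gabriel filtration (Proposition \ref{bij_spec}(1$_\alpha$), which gives $\widetilde{F(\C_\alpha)}=\C_\alpha$, hence $\Gdim\circ F=\Gdim$). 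Set $\delta=\Gdim(K)$; by Corollary \ref{cor_noeth}, $\delta$ is a successor ordinal, say $\delta=\gamma+1$. By Lemma \ref{pre1}(4), finite direct sums do not raise Gabriel dimension beyond the max of the summands, so every \emph{finitely generated} $R$-submodule of $\RG\otimes_R K$, being contained in a finite sub-sum $\bigoplus_{g\in F}\underline g K$ with $F\in\F(G)$, is Noetherian with Gabriel dimension $\leq\delta$; moreover, again by Corollary \ref{cor_noeth}, its Gabriel dimension is itself a successor ordinal $\leq\gamma+1$, i.e.\ of the form $\beta+1$ with $\beta\leq\gamma$.

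Now take a nonzero $N\leq\RG\otimes_R K$. Since $\lmod R$ is locally finitely generated, $N$ is the direct union of its finitely generated submodules $\{N_j:j\in J\}$, each of which (by the previous paragraph) lies in $\C_{\gamma+1}$. As $\C_{\gamma+1}$ is a torsion class, hence closed under direct unions, we get $N\in\C_{\gamma+1}$, so $\Gdim(N)\leq\gamma+1$. It remains to rule out the case that $\Gdim(N)$ is a limit ordinal $\lambda\leq\gamma+1$ (necessarily $\lambda\leq\gamma$). If it were, Lemma \ref{pre1}(3) would give $N=\bigcup_{\alpha<\lambda}\tor_\alpha(N)$; then each $N_j$, being finitely generated, would be contained in $\tor_\alpha(N)\in\C_\alpha$ for some single $\alpha<\lambda$, forcing $\Gdim(N_j)\leq\alpha<\lambda$. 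But $N\neq0$ means some $N_j\neq0$, and I would now derive a contradiction from the two descriptions of $\Gdim(N)$: on the one hand $N=\bigcup_j N_j$ with all $\Gdim(N_j)<\lambda$ gives, via the fact that each $N_j\in\C_{\alpha_j}$ for some $\alpha_j<\lambda$ and $\C_{<\lambda}:=\bigcup_{\alpha<\lambda}\C_\alpha$ generates $\C_\lambda$, that $N\in\C_{\sup_j\alpha_j}$ provided that supremum is $<\lambda$ — which need not hold in general, so the cleaner route is: $\Gdim(N)=\lambda$ limit forces (Lemma \ref{pre1}(3)) $N=\bigcup_{\alpha<\lambda}\tor_\alpha(N)$ and each $\tor_\alpha(N)\subsetneq N$, whereas for each finitely generated $N_j$ we showed $\Gdim(N_j)$ is a \emph{successor} and $N_j\subseteq\tor_{\alpha_j}(N)$ for some $\alpha_j<\lambda$; taking $N_j$ with $\Gdim(N_j)$ maximal among finitely generated submodules (which exists and equals $\Gdim(N)$ when $N$ is... ) — here is the actual subtlety.

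\textbf{The main obstacle.} The delicate point is precisely this last step: showing a limit ordinal cannot occur. The honest argument is that $\Gdim(N)=\sup_j\Gdim(N_j)$ over the directed family of finitely generated submodules — this follows because $\C_\alpha$ is closed under direct unions, so $N\in\C_\alpha$ iff all $N_j\in\C_\alpha$, hence $\Gdim(N)=\min\{\alpha:\text{all }N_j\in\C_\alpha\}=\sup_j\Gdim(N_j)$ when the $N_j$ have cofinal Gabriel dimensions; but if $\Gdim(N)=\lambda$ is a limit ordinal, then the successor ordinals $\Gdim(N_j)=\beta_j+1<\lambda$ are cofinal in $\lambda$, which is fine set-theoretically — so a limit ordinal is \emph{not} automatically excluded this way. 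The resolution must use Noetherianity of $R$ more forcefully, or rather the specific structure: one should instead argue directly on $N$ using that $N\hookrightarrow\RG\otimes_R K\cong\bigoplus_{g\in G}\underline g K$ and that this ambient module, while not Noetherian, is a direct sum of Noetherian modules all of Gabriel dimension $\delta=\gamma+1$; so $N\in\C_{\gamma+1}$ and by Lemma \ref{pre1}(2), $N$ has a continuous chain with factors either $\gamma$-cocritical or $\gamma$-torsion, and one uses this to see $\Gdim(N)\in\{-1\}\cup\{\beta+1:\beta\leq\gamma\}$ directly — if $\Gdim(N)=\lambda$ limit, then $N\notin\C_{\gamma}$ (else $\Gdim(N)\leq\gamma$), so some factor of that chain is genuinely $\gamma$-cocritical, whence $\C_{\gamma+1}\supsetneq\C_\gamma$ forces $\Gdim(N)=\gamma+1$, a successor, contradiction. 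I expect making this dichotomy watertight — extracting a cocritical (not merely torsion) factor and concluding $\Gdim(N)=\gamma+1$ exactly — to be the crux; it is morally the argument of Corollary \ref{cor_noeth} pushed through a direct limit, and the clean way to present it is to prove the auxiliary fact that \emph{any submodule of a direct sum of Noetherian modules of Gabriel dimension $\leq\gamma+1$ again has Gabriel dimension a successor ordinal $\leq\gamma+1$ (or is zero)}, by transfinite induction on $\gamma$ exactly mirroring Corollary \ref{cor_noeth}.
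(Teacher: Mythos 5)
There is a genuine gap, and it is located exactly where you flag it: ruling out a limit ordinal. Your proposed auxiliary fact --- that any submodule of a direct sum of Noetherian modules of Gabriel dimension $\leq\gamma+1$ has successor Gabriel dimension --- is \emph{false} in general. For a counterexample, take a family of simple modules $S_i$ with $S_i$ being $\alpha_i$-cocritical where $\alpha_0<\alpha_1<\dots$ is a strictly increasing sequence below $\gamma$ of order type $\omega$; each $S_i$ is Noetherian of Gabriel dimension $\alpha_i+1\leq\gamma+1$, but $\Gdim\bigl(\bigoplus_i S_i\bigr)=\sup_i(\alpha_i+1)$ is a limit ordinal. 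So the bound $\Gdim\leq\gamma+1$ on the summands does not by itself control where in $[0,\gamma+1]$ a submodule of the direct sum lands, and your claim that $N\in\C_{\gamma+1}$ forces $\Gdim(N)\in\{-1\}\cup\{\beta+1:\beta\leq\gamma\}$ is incorrect. The ``argument of Corollary \ref{cor_noeth} pushed through a direct limit'' cannot work at this level of generality.

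What makes the actual situation work is a structural observation you do not use: all the $R$-module summands $\underline g K$ of $\RG\otimes_R K$ are obtained from the \emph{single} Noetherian module $K$ via the self-equivalences $F_{\sigma(g)}$, which preserve the Gabriel filtration (Proposition \ref{bij_spec}(1$_\alpha$)). Consequently $\tor_\alpha(\RG\otimes_R K)\cong\RG\otimes_R\tor_\alpha(K)$ for every $\alpha$, so the torsion filtration of the ambient module jumps at exactly the same ordinals as that of $K$ --- and there are only \emph{finitely many} such jumps, because $K$ is Noetherian (Corollary \ref{cor_noeth} gives a finite chain with cocritical factors). Since $\tor_\alpha(N)=\tor_\alpha(\RG\otimes_R K)\cap N$, a modular-law computation shows the successive quotients $\tor_{\alpha+1}(N)/\tor_\alpha(N)$ embed into $\tor_{\alpha+1}(\RG\otimes_R K)/\tor_\alpha(\RG\otimes_R K)$, hence are nonzero only at those finitely many ordinals. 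Then $\Gdim(N)=\sup\{\alpha+1:\tor_{\alpha+1}(N)/\tor_\alpha(N)\neq0\}$ is a supremum of a nonempty \emph{finite} set of successor ordinals, so a maximum, so a successor. Your observation that each $\underline g K$ has the same Gabriel dimension as $K$ is close to what is needed, but the key is the stronger fact that the entire torsion filtration (not merely its top) is transported coherently across all summands; that is what keeps the set of jump ordinals finite and prevents the limit-ordinal scenario.
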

\begin{proof}
A consequence of Lemma \ref{pre1} (6) is that $\tor_{\alpha+1}(K)/\tor_\alpha(K)\neq 0$ for just finitely many ordinals $\alpha$. Notice  that  $\tor_{\alpha}(\RG\otimes_RK)\cong \RG\otimes_R\tor_\alpha(K)$, as left $R$-modules, for any ordinal $\alpha$. Thus, $\tor_{\alpha+1}(\RG\otimes_RK)/\tor_\alpha(\RG\otimes_RK)\neq 0$ for finitely many ordinals. Notice also that $\tor_\alpha(N)=\tor_\alpha(\RG\otimes_RK) \cap N$ for all $\alpha$, thus, 
\begin{align*}\frac{\tor_{\alpha+1}(N)}{\tor_\alpha(N)}=&\frac{\tor_{\alpha+1}(\RG\otimes_RK) \cap N}{\tor_\alpha(\RG\otimes_RK) \cap N}\cong\\
&\cong \frac{(\tor_{\alpha+1}(\RG\otimes_RK) \cap N)+\tor_\alpha(\RG\otimes_RK)}{\tor_\alpha(\RG\otimes_RK)}\leq \frac{\tor_{\alpha+1}(\RG\otimes_RK)}{\tor_\alpha(\RG\otimes_RK)}\end{align*}
is different from zero for finitely many ordinals $\alpha$. Thus, 
$$\Gdim(N)=\sup\{\alpha+1:\tor_{\alpha+1}(N)/\tor_\alpha(N)\neq 0\}=\max\{\alpha+1:\tor_{\alpha+1}(N)/\tor_\alpha(N)\neq 0\}$$
is clearly a successor ordinal.
\end{proof}
\end{enumerate}
%


\noindent
Now, suppose that $\ker(\phi)\neq 0$ and let $\Gdim(\ker(\phi))=\alpha+1$. We want to show that 
$$\phi\restriction_{\tor_{\alpha+1}(M)}:\tor_{\alpha+1}(M)\to \tor_{\alpha+1}(M)$$ is surjective. Indeed, if there is $x\in \tor_{\alpha+1}(M)\setminus \phi(\tor_{\alpha+1}(M))$, it means that there exists $y\in M\setminus\tor_{\alpha+1}(M)$ such that $\phi(y)=x$ (by the surjectivity of $\phi$). This is to say that there is a short exact sequence
$$0\to \ker(\phi)\cap\RG y\to\RG y \to \RG x\to 0\,,$$
with $\Gdim({}_R(\RG y))\gneq \alpha+1\geq \max\{\Gdim({}_R(\ker(\phi)\cap\RG y)),\Gdim({}_R(\RG x))\}$, which contradicts Lemma \ref{pre1}(4). Thus, we have a short exact sequence of left $\RG$-modules
$$0\to \ker(\phi)\to \tor_{\alpha+1} (M)\to \tor_{\alpha+1}(M)\to 0\, .$$
Consider the length function $\ell_\alpha:\lmod R\to \R_{\geq 0}\cup\{\infty\}$ 
described in Subsection \ref{examples} and recall that $\ker(\ell_\alpha)$ is exactly the class of all left $R$-modules with Gabriel dimension $\leq \alpha$. Furthermore,  $\tor_{\alpha+1}(K)$ is a Noetherian module, thus, $\Q_\alpha(\tor_{\alpha+1}(K))$ is a Noetherian object in a semi-Artinian category, that is, an object with finite composition length, for this reason $\ell_\alpha(\tor_{\alpha+1}(K))=\ell(\Q_\alpha(\tor_{\alpha+1}(K)))<\infty$. Using the computations of Example \ref{Bernoulli} and the Addition Theorem, we get
$$\ent_{\ell_\alpha}(\tor_{\alpha+1}(\RG\otimes K)))=\ell_{\alpha}(\tor_{\alpha+1}(K))<\infty\ \ \ \text{ and }\ \ \ \ent_{\ell_\alpha}(\tor_{\alpha+1}(M))=\ent_{\ell_\alpha}(\tor_{\alpha+1}(M))+\ent_{\ell_\alpha}(\ker(\phi))\,.$$ 
Hence, $\ent_{\ell_\alpha}(\ker(\phi))=0$ which, by Proposition \ref{ent>>0}, is equivalent to say that $\ell_\alpha(\ker(\phi))=0$, contradicting the fact that $\Gdim(\ker(\phi))=\alpha+1$.
\end{proof}

In the above proof we made use of the Addition Theorem for the algebraic entropy, which is quite a deep result. We want to underline that if one is only interested in the second part of the statement, that is, stable finiteness of endomorphism rings, then it is sufficient to use the weaker additivity of the algebraic entropy on direct sums, which can be independently verified as an exercise.

\begin{example}\label{libero}
Let $G$ be a free group of rank $\geq2$ and let $\K$ be a field. It is well-known that $\K[G]$ is not left (nor right) Noetherian so we can find a left ideal $_{\K[G]}I\leq \K[G]$ which is not finitely generated. Furthermore, by \cite[Corollary 7.11.8]{Cohn}, $\K[G]$ is a free ideal ring, so $I$ is free. This means that $I$ is isomorphic to a coproduct of the form $\K[G]^{(\N)}$ which is obviously not hopfian. 
\end{example}

Let us conclude this subsection with the following problem:

\begin{problem}\label{problem}
Study the class of finitely generated groups $G$ such that the group algebra $\K[G]$ is hereditarily Hopfian for any skew field $\K$. Is it true that this property characterizes the class of finitely generated amenable groups?
\end{problem}
One could also state an analogous problem including all the possible crossed products $\K\asterisk G$, instead of just the group algebras $\K[G]$.

\subsection{Zero-Divisors}\label{zero_conj}

In this last section of the paper we discuss another classical conjecture due to Kaplansky about group rings connecting it to the theory of algebraic entropy:
\begin{conjecture}[Kaplansky]\label{zero-divisor}
Let $\K$ be a field and $G$ be a torsion-free group. Then $\K[G]$ is a domain.
\end{conjecture}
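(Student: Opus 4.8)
The plan is to use the algebraic entropy machinery of this paper; as recorded in the introduction (and made precise in Theorem~\ref{dom=oredom}), the invariant $\ent_{\dim}$ attached to the length function $\dim_\K$ on $\lmod\K$ converts the absence of zero-divisors in a crossed product into a quantitative integrality statement, and this is essentially the only handle one has on Conjecture~\ref{zero-divisor}.

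\emph{Reduction to finitely generated $G$.} A hypothetical zero-divisor relation $ab=0$ with $a,b\in\K[G]\setminus\{0\}$ involves only the finitely many group elements in $\supp(a)\cup\supp(b)$; if $H\leq G$ is the subgroup they generate, then $a,b\in\K[H]$ and $ab=0$ already in $\K[H]$. Since subgroups of torsion-free groups are torsion-free, it suffices to prove the statement for every finitely generated torsion-free $G$. This is precisely the setting in which the F\o lner/entropy technology of Section~\ref{amenable_ent} is available, so in what follows I address the subcase in which $G$ is \emph{amenable}.

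\emph{The amenable case via entropy.} The length function $\dim_\K\colon\lmod\K\to\R_{\geq0}\cup\{\infty\}$ is discrete and compatible with any crossed product, and ${}_{\K\asterisk G}(\K\asterisk G)=\K\asterisk G\otimes_\K\K$ is a Bernoulli shift with $\ent_{\dim}=\dim_\K(\K)=1$ by Example~\ref{Bernoulli}. Given $0\neq f\in\K\asterisk G$, right multiplication by $f$ is a left $\K\asterisk G$-module endomorphism, giving a short exact sequence
\[
0\to N\to {}_{\K\asterisk G}(\K\asterisk G)\xrightarrow{\,\cdot f\,}(\K\asterisk G)f\to 0,\qquad N=\{a\in\K\asterisk G:af=0\},
\]
in which $N$ and $(\K\asterisk G)f$ are subshifts of a Bernoulli shift, hence lie in $\lFin_{\dim}(\K\asterisk G)$. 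By additivity of $\ent_{\dim}$ (Theorem~\ref{main_ent}(4)) one gets $1=\ent_{\dim}(N)+\ent_{\dim}((\K\asterisk G)f)$, while Proposition~\ref{ent>>0} gives $N=0$ if and only if $\ent_{\dim}(N)=0$, i.e. if and only if $\ent_{\dim}((\K\asterisk G)f)=1$. Since $\K\asterisk G$ is a domain exactly when $N=0$ for every $f\neq0$, the conjecture in the amenable case is equivalent to the assertion that $\ent_{\dim}$ never attains a value strictly between consecutive integers on these cyclic subshifts — that is, to the $\N\cup\{\infty\}$-valuedness of $\ent_{\dim}$ asserted in Theorem~\ref{dom=oredom} — and, together with the fact that an amenable domain is Ore, also to the existence of a flat embedding of $\K\asterisk G$ into a division ring.

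\emph{Main obstacle.} The genuine difficulty is exactly the step that the above only reformulates: ruling out the intermediate value $0<\ent_{\dim}(N)<1$, equivalently proving that $\ent_{\dim}$ is integer-valued, is a rigidity statement on a par with the $\K$-linear analogue of the Atiyah conjecture, and is known only for restricted classes of $G$ (for instance elementary amenable groups, or left-orderable groups, where $\K[G]$ is trivially a domain). For $G$ torsion-free but \emph{not} amenable the present machinery does not apply at all: as Elek's result quoted in the introduction shows, no invariant enjoying properties (1)--(4) of $\ent_L$ can exist for non-amenable $G$. I therefore do not expect to establish Conjecture~\ref{zero-divisor} in full here — only the reformulation above, which recovers the amenable partial cases already covered by Theorem~\ref{dom=oredom} and by prior work, and isolates entropy-integrality as the essential missing ingredient.
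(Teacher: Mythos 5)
You have correctly recognized that this statement is an open conjecture which the paper itself does not prove: the paper only establishes, in Theorem~\ref{dom=oredom}, that for a finitely generated amenable group the domain property of a crossed product $\K\asterisk G$ is equivalent to the integer-valuedness of $\ent_{\dim}$, and your reformulation (the exact sequence $0\to N\to\K\asterisk G\to(\K\asterisk G)f\to 0$, additivity, and Proposition~\ref{ent>>0}) reproduces precisely the argument used there in the implications (2)$\Leftrightarrow$(3)$\Leftrightarrow$(4), with the Ore/flat-embedding addendum coming from Proposition~\ref{rank=Ore} exactly as in the paper. Your preliminary reduction to finitely generated subgroups is sound for group rings and is not needed by (but is consistent with) the paper, and your identification of entropy integrality as the genuine missing ingredient, together with the observation that the method cannot reach non-amenable $G$, accurately reflects the state of affairs in the paper.
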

Some cases of the above conjecture are known to be true but the conjecture is fairly open in general (for a classical reference on this conjecture see for example \cite{passmann}). In most of the known cases, the strategy for the proof is to find an immersion of $\K[G]$ in some division ring. This is clearly sufficient but, in principle, it is a stronger property. To the best of the author's knowledge, the following question remains open: {\em Is it true that $\K[G]$ is a domain if and only if $\K[G]$ is a subring of a division ring?}\\
The above question is known to have positive answer if $G$ is amenable (see \cite[Example 8.16]{Luck}). In this section we provide an alternative argument to answer the above question for amenable groups (in the more general setting of crossed group rings) and we translate the amenable case of Conjecture \ref{zero-divisor} into an equivalent statement about algebraic entropy. This approach is inspired to the work of Nhan-Phu Chung and Andreas Thom \cite{CT}. Indeed, we can prove the following

\begin{theorem}\label{dom=oredom}
Let $\K$ be a division ring and let $G$ be a finitely generated amenable group. For any fixed crossed product $\K\asterisk G$, the following are equivalent:
\begin{enumerate}[\rm (1)]
\item $\K\asterisk G$ is a left (and right) Ore domain; 
\item $\K\asterisk G$ is a domain;
\item $\ent_{\dim}({}_{\K\asterisk G}M)=0$, for every proper quotient $M$ of $\K\asterisk G$;
\item $\Im(\ent_{\dim})=\N\cup\{\infty\}$.
\end{enumerate}
\end{theorem}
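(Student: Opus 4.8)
The plan is to prove the cycle of implications $(1)\Rightarrow(2)\Rightarrow(3)\Rightarrow(4)\Rightarrow(1)$, exploiting the machinery of the algebraic $\dim$-entropy on $\lmod{\K\asterisk G}$ together with the basic ring theory of Ore domains. Throughout, write $\ent=\ent_{\dim}$ and note that $\dim=\dim_\K$ is a discrete length function compatible with any crossed product $\K\asterisk G$, so $\ent$ is a length function on all of $\lmod{\K\asterisk G}=\lFin_{\dim}(\K\asterisk G)$ by Theorem~\ref{main_ent}, and that $\ent(\K\asterisk G)=\dim_\K\K=1$ by Example~\ref{Bernoulli}.

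The implication $(1)\Rightarrow(2)$ is trivial. For $(2)\Rightarrow(3)$: if $\K\asterisk G$ is a domain and $M=\K\asterisk G/I$ is a proper quotient, pick $0\ne a\in I$; then left multiplication by $a$ gives an injection $\K\asterisk G\hookrightarrow I$, so $\dim_\K I=\infty$ and hence $\dim_\K M=\dim_\K(\K\asterisk G)-\dim_\K I$ forces $\dim_\K M<\infty$ is false — rather, the right way is: $\dim_\K I=\infty$ and, since $I$ is a nonzero subshift of the Bernoulli shift $\K\asterisk G=\K\asterisk G\otimes_\K\K$, Proposition~\ref{ent>>0} gives $\ent({}_{\K\asterisk G}I)>0$; by additivity (Theorem~\ref{main_ent}(4)), $1=\ent(\K\asterisk G)=\ent(I)+\ent(M)$, so $\ent(I)=1$ and $\ent(M)=0$. (One must check $I$ is infinite-dimensional over $\K$, which follows from $a\ne0$ being a non-zero-divisor.) For $(3)\Rightarrow(4)$: one inclusion, $\N\cup\{\infty\}\subseteq\Im(\ent)$, always holds since $\ent(\K\asterisk G\otimes_\K\K^n)=n$ and $\ent((\K\asterisk G)^{(\N)})=\infty$. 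For the reverse, take any $M\in\lmod{\K\asterisk G}$; by upper continuity (Corollary~\ref{amen_upp}) it suffices to treat finitely generated $M$, and such an $M$ is a quotient $(\K\asterisk G)^n/N$; using additivity, $\ent(M)=n-\ent(N)$, so it is enough to show $\ent$ is $\N\cup\{\infty\}$-valued on submodules of $(\K\asterisk G)^n$. Here I would induct on $n$, writing a submodule $N\le(\K\asterisk G)^n$ via the exact sequence $0\to N\cap(\K\asterisk G)^{n-1}\to N\to \pi(N)\to0$ where $\pi(N)$ is a submodule of $\K\asterisk G$, i.e.\ a left ideal, and left ideals are kernels of maps to proper quotients; condition (3) plus additivity then pin the entropy to an integer, and the inductive hypothesis handles the first term, again via additivity.

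For $(4)\Rightarrow(1)$, the strategy — following Chung--Thom — is: first show $\K\asterisk G$ is a domain, then invoke amenability to upgrade ``domain'' to ``Ore domain''. To see it is a domain, suppose $ab=0$ with $a,b\ne0$; then $\K\asterisk G\cdot b$ is a proper quotient of $\K\asterisk G$ (the surjection $\K\asterisk G\to\K\asterisk G\,b$, $x\mapsto xb$, has kernel containing $a\ne0$), so $\dim_\K(\K\asterisk G\,b)<\infty$, whence $\ent(\K\asterisk G\,b)=0$ by Example~\ref{ent_fin=0}; but also $\ker(\cdot b)$ is a nonzero subshift, so $\ent(\ker(\cdot b))>0$ by Proposition~\ref{ent>>0}, and additivity gives $1=\ent(\ker(\cdot b))+\ent(\K\asterisk G\,b)=\ent(\ker(\cdot b))$, while condition (4) forces $\ent(\ker(\cdot b))\in\N$, which is consistent — so instead the contradiction must be extracted differently: $\dim_\K(\K\asterisk G\,b)<\infty$ means $b$ is a torsion element for the right action, and since $\K$ is a division ring and the $\underline g$ are units, a nonzero $b$ generates an infinite-dimensional $\K$-subspace under right multiplication by the $\underline g$, contradiction. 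Thus $\K\asterisk G$ has no zero divisors. Finally, since $G$ is amenable, $\K\asterisk G$ has no free subalgebra on two generators (indeed it satisfies a F\o lner condition at the ring level), and a classical theorem (every domain satisfying the Ore condition, guaranteed for rings without free subalgebras of rank $2$ that are not too large — here one cites the amenable case, e.g.\ via \cite{Luck}) shows a domain that is a crossed product over an amenable group is automatically a left and right Ore domain. As a consequence (stated in the paper's introduction) $\K\asterisk G$ then embeds flatly in its Ore division ring of fractions.

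The main obstacle I expect is the careful bookkeeping in $(3)\Rightarrow(4)$: one must be sure that the inductive decomposition of submodules of $(\K\asterisk G)^n$ genuinely reduces the integrality of entropy to condition (3) on proper quotients of $\K\asterisk G$ alone, and that upper continuity correctly passes the $\N\cup\{\infty\}$-valuedness from finitely generated modules to arbitrary ones (a supremum of integers is in $\N\cup\{\infty\}$, so this is fine, but one should say it). A secondary subtlety is the very last step $(4)\Rightarrow(1)$: turning ``domain'' into ``Ore domain'' is not formal and genuinely uses amenability of $G$; I would quote the amenable-group result rather than reprove it, since the paper already references \cite{Luck} and \cite{CT} for exactly this point.
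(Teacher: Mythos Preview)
Your $(4)\Rightarrow(\text{domain})$ step has a genuine gap. You assert ``so $\dim_\K(\K\asterisk G\, b)<\infty$'' as if this followed from $\K\asterisk G\, b$ being a proper quotient of $\K\asterisk G$; it does not, and the rest of your argument is built on this unjustified claim. The paper's argument uses hypothesis~(4) at precisely the right moment and runs as follows: from $\ker(\cdot\, b)\neq 0$ one gets $\ent(\ker(\cdot\, b))>0$ by Proposition~\ref{ent>>0}, and \emph{then} (4) forces $\ent(\ker(\cdot\, b))\geq 1$ (positive integer!); additivity now gives $\ent(\K\asterisk G\, b)=1-\ent(\ker(\cdot\, b))\leq 0$, hence $\ent(\K\asterisk G\, b)=0$; finally Proposition~\ref{ent>>0} in the reverse direction yields $\dim_\K(\K\asterisk G\, b)=0$, i.e.\ $b=0$. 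You had all the pieces but assembled them in the wrong order, trying to get $\dim_\K<\infty$ first rather than getting $\ent=0$ first and deducing $\dim_\K=0$ from it.

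Two further points where the paper is more direct. For $(\text{domain})\Rightarrow(\text{Ore})$ you invoke an external result about amenable groups; the paper instead uses its own Proposition~\ref{rank=Ore} (a domain $D$ is left Ore if and only if there is a length function $L$ on $\lmod D$ with $L(D)=1$), and since $\ent_{\dim}$ is such a length function with $\ent_{\dim}(\K\asterisk G)=1$, the Ore condition is immediate and self-contained. For $(3)\Rightarrow(4)$ your inductive argument on submodules of $(\K\asterisk G)^n$ works, but the paper's route is shorter: filter any finitely generated $F$ by cyclic submodules $F_i=\sum_{j\leq i}\K\asterisk G\, x_j$; each $F_i/F_{i-1}$ is a quotient of $\K\asterisk G$, hence has entropy in $\{0,1\}$ by~(3), so $\ent(F)=\sum_i\ent(F_i/F_{i-1})\in\N$, and upper continuity finishes. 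Your $(2)\Rightarrow(3)$ is essentially correct once you commit to the injection $\K\asterisk G\hookrightarrow I$, $x\mapsto xa$, which gives $\ent(I)\geq 1$ directly; the detour through Proposition~\ref{ent>>0} there is unnecessary.
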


Before proving the above theorem we recall some useful properties about Ore domains. We start recalling that a domain $D$ is left Ore if $Dx\cap Dy\neq\{0\}$ for all $x,y\in D\setminus\{0\}$. It can be shown that this is equivalent to say that $D$ is a left flat subring of a division ring. 

\begin{proposition}\label{rank=Ore}
A domain $D$ is left Ore if and only if there is a length function $L\colon \lmod D\to \R_{\geq 0}\cup\{\infty\}$ such that $L(D)=1$.
\end{proposition}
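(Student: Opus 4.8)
The plan is to prove both implications by hand, using standard facts about Ore domains and the rank function from Example \ref{rk}. For the forward direction, suppose $D$ is a left Ore domain; then, as recalled just before the statement, $D$ embeds in its division ring of left fractions $Q = \Sigma^{-1}D$ with $\Sigma = D\setminus\{0\}$, and Example \ref{rk} already tells us that the torsion-free rank $\rk\colon \lmod D\to \R_{\geq 0}\cup\{\infty\}$, $\rk(M)=\dim_Q(Q\otimes_D M)$, is a length function. Since $Q\otimes_D D \cong Q$ as a left $Q$-module, we get $\rk(D) = \dim_Q(Q) = 1$, so $L = \rk$ works.

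For the converse, suppose there is a length function $L$ on $\lmod D$ with $L(D)=1$. The key step is to show that $Dx\cap Dy\neq\{0\}$ for all nonzero $x,y\in D$. First note that for any nonzero $x\in D$, left multiplication gives an isomorphism ${}_DD\cong Dx$ of left $D$-modules (injective because $D$ is a domain), so $L(Dx)=L(D)=1$; applying additivity to $0\to Dx\to D\to D/Dx\to 0$ gives $L(D/Dx)=0$. Now fix nonzero $x,y\in D$ and suppose toward a contradiction that $Dx\cap Dy=\{0\}$. Then the internal sum $Dx+Dy$ is direct, so $Dx\oplus Dy\hookrightarrow D$ and by additivity $L(Dx\oplus Dy)=L(Dx)+L(Dy)=2$; but $L(Dx\oplus Dy)\le L(D)=1$ by Lemma \ref{basic_add}(1) (it is a submodule of $D$), a contradiction. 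Hence $D$ is left Ore. (The same argument on the right, or the standard fact that a domain which is one-sided Ore relative to such a length function is automatically two-sided Ore in the relevant sense, handles the parenthetical; alternatively one invokes that left Ore domains are exactly left flat subrings of division rings, as recalled before the statement, and this is a left-sided condition.)

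The main obstacle, such as it is, is purely bookkeeping: making sure the isomorphism ${}_DD\cong Dx$ is genuinely an isomorphism of \emph{left} modules (it is, since $d\mapsto dx$ is left $D$-linear and injective by the domain hypothesis, and surjective onto $Dx$ by definition), and being careful that additivity of $L$ on the short exact sequence $0\to Dx\to D\to D/Dx\to 0$ together with $L$ taking values in $\R_{\ge0}\cup\{\infty\}$ forces $L(D/Dx)$ to be the \emph{finite} value $0$ rather than something ill-defined — but $L(D)=1<\infty$ rules that out. I do not anticipate needing quasi-tilings, entropy, or Gabriel dimension here; this is a short direct argument, and the real content is the elementary observation that a length function normalized so that $L(D)=1$ cannot accommodate a rank-two free submodule of ${}_DD$.
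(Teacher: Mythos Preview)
Your proof is correct and follows essentially the same approach as the paper: both directions use the rank function $\rk(M)=\dim_Q(Q\otimes_D M)$ for the forward implication, and for the converse both argue that $L(Dx)=L(Dy)=1$ (you via the isomorphism $d\mapsto dx$, the paper via the mutual containment $D\hookrightarrow Dx\hookrightarrow D$) and then derive the contradiction $1=L(D)\geq L(Dx\oplus Dy)=2$ from $Dx\cap Dy=0$. Your parenthetical about the right Ore condition is unnecessary since the statement only concerns the left Ore property, but this does not affect correctness.
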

\begin{proof}
If $D$ is left Ore, then $D$ is a flat subring of a division ring $\K$. Then there is an exact functor $\K\otimes_{D}-:\lmod D\to \lmod \K$ which commutes with direct limits. Thus, we can define the desired length function $L$ simply letting $L({}_DM):=\dim_{\K}(\K\otimes_D M)$. 
\\
On the other hand, suppose that there is a length function $L\colon\lmod D\to \R_{\geq 0}\cup\{\infty\}$ such that $L(D)=1$ and choose $x,$ $y\in D\setminus \{0\}$. Since $D$ is a domain, both $Dx$ and $Dy$ contain (and are contained in) a copy of $D$, thus $L(Dx)=L(Dy)=1$. If, looking for a contradiction $Dx\cap Dy=\{0\}$, then
$$1=L(D)\geq L(Dx+Dy)=L(Dx\oplus Dy)=L(Dx)+L(Dy)=2,$$
which is  a contradiction. 
\end{proof}

It is a classical result that any left Noetherian domain is left Ore (see for example \cite[Theorem 1.15 in Chapter 2.1]{McRobson}). By the above proposition we can generalize this result as follows:
\begin{corollary}
A domain with left Gabriel dimension is necessarily left Ore.
\end{corollary}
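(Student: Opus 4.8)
The plan is to produce a length function $L\colon\lmod D\to\R_{\geq0}\cup\{\infty\}$ with $L({}_DD)=1$ and then quote Proposition \ref{rank=Ore}. First I would note that, since ${}_DD$ has Gabriel dimension, the category $\lmod D$ is itself a Gabriel category with $\Gdim(\lmod D)=\Gdim({}_DD)=:\kappa$: if ${}_DD\in\C_\beta$ then $\C_\beta$, being a hereditary torsion class, is closed under quotients and coproducts, so it contains every quotient of a free module, i.e.\ all of $\lmod D$. Next I would check that $\kappa$ is a successor ordinal, say $\kappa=\alpha+1$. Indeed, if $\kappa$ were a limit ordinal then Lemma \ref{pre1}(3) would give ${}_DD=\bigcup_{\beta<\kappa}\tor_\beta({}_DD)$; but for a domain the map $r\mapsto rx$ embeds ${}_DD$ into $D$ for every nonzero $x$, so a nonzero $\tor_\beta({}_DD)\in\C_\beta$ would contain a copy $Dx\cong{}_DD$, forcing $D\in\C_\beta$ and $\kappa\leq\beta<\kappa$, a contradiction; hence all $\tor_\beta({}_DD)$ vanish and $D=0$, absurd.

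The heart of the argument would be to show that ${}_DD$ is $\alpha$-cocritical. The same embedding trick shows $\tor_\alpha({}_DD)=0$, so ${}_DD$ is $\alpha$-torsion free. As $D\in\C_{\alpha+1}$, Lemma \ref{pre1}(2) gives a continuous chain $0=N_0\leq N_1\leq\dots\leq N_\sigma=D$ whose factors are $\alpha$-cocritical or $\alpha$-torsion; taking the least index $k+1$ with $N_{k+1}\neq0$ (necessarily a successor, by continuity of the chain at limits), the factor $N_{k+1}/N_k=N_{k+1}$ cannot be $\alpha$-torsion, as it would lie inside $\tor_\alpha({}_DD)=0$, so $N_{k+1}$ is a nonzero $\alpha$-cocritical submodule of $D$. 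Since any nonzero submodule $C'$ of an $\alpha$-cocritical object $C$ is again $\alpha$-cocritical (it is $\alpha$-torsion free, and any proper quotient $C'/C''$ with $C''\neq0$ embeds in the $\alpha$-torsion object $C/C''$), applying this to $Dx\leq N_{k+1}$ with $x\neq0$ and using $Dx\cong{}_DD$ shows that ${}_DD$ is $\alpha$-cocritical.

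It would then follow --- by the argument used in Section \ref{LF} to prove that $\C_{\alpha+1}/\C_\alpha$ is semi-Artinian --- that $\Q_\alpha({}_DD)$ is a simple object of $\C_{\alpha+1}/\C_\alpha$, say $\Q_\alpha({}_DD)\cong S(\pi)$ for a unique $\pi\in\Sp^\alpha(\C)\subseteq\Sp(\C)$. I would take $L$ to be the atomic length function $\ell_\pi$ of $\lmod D$. Since its V\'amos extension does not alter values on $\C_{\alpha+1}$, and on $\C_{\alpha+1}$ the function $\ell_\pi$ is the $\Q_\alpha$-lifting (Lemma \ref{localization}) of the atomic length function on the semi-Artinian category $\C_{\alpha+1}/\C_\alpha$ from Definition \ref{atomic_0}, one gets $\ell_\pi({}_DD)=\ell_\pi(\Q_\alpha({}_DD))=\ell_\pi(S(\pi))=1$. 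Hence $L=\ell_\pi$ is a length function on $\lmod D$ with $L({}_DD)=1$, and Proposition \ref{rank=Ore} yields that $D$ is left Ore.

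I expect the middle step --- deducing from the Gabriel filtration that $\Gdim({}_DD)$ is a successor and that ${}_DD$ is cocritical at its top level --- to be the main obstacle, since this is precisely where the domain hypothesis must be used essentially (via the fact that every nonzero left ideal of $D$ contains an isomorphic copy of ${}_DD$). Once cocriticality is in hand, the remaining steps are routine bookkeeping with the localization machinery of Section \ref{LF}.
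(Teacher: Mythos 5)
Your proof is correct and follows the same overall strategy as the paper: show that $\Gdim({}_DD)$ is a successor $\alpha+1$, show that the localization $\Q_\alpha({}_DD)$ is simple, and feed the resulting length function (taking value $1$ on ${}_DD$) into Proposition \ref{rank=Ore}. The preliminary steps (Gabriel dimension passes from ${}_DD$ to $\lmod D$, and is not a limit ordinal) coincide with the paper's and use the same trick that every nonzero $x\in D$ gives $Dx\cong{}_DD$. Where your route genuinely diverges is in the middle step. The paper finds a simple subobject $S$ of $\Q_\alpha({}_DD)$ (using semi-Artinianity of $\C_{\alpha+1}/\C_\alpha$), passes to $\S_\alpha(S)$ inside $E(D)$, and uses essentiality of $D$ in $E(D)$ together with the domain trick to show $S=\Q_\alpha({}_DD)$. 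You instead argue intrinsically with the Gabriel filtration: you extract a nonzero $\alpha$-cocritical submodule of $D$ from a chain with cocritical-or-torsion factors, observe that nonzero submodules of cocritical objects are cocritical, and conclude via $Dx\cong{}_DD$ that ${}_DD$ itself is $\alpha$-cocritical, whence $\Q_\alpha({}_DD)$ is simple. Your version avoids injective envelopes and the section functor entirely and yields the slightly stronger structural statement that ${}_DD$ is cocritical; the paper's argument is shorter if one already has the localization machinery at hand. The final bookkeeping also differs cosmetically — you use the atomic length function $\ell_\pi$ associated to the point $\pi$ with $S(\pi)\cong\Q_\alpha({}_DD)$, whereas the paper uses $\ell_\alpha(M)=\ell(\Q_\alpha(M))$ — but both evaluate to $1$ on ${}_DD$, which is all that is needed.
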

\begin{proof}
Let $D$ be a domain with left Gabriel dimension. First of all we  verify that $\Gdim ({}_DD)$ is not a limit ordinal. Indeed, if $\Gdim({}_DD)=\lambda$ is a limit ordinal, then $D=\bigcup_{\alpha<\lambda}\tor_\alpha(D)$. This means that, for any non-zero $x\in D$, there exists $\alpha<\lambda$ such that $Dx\in \tor_\alpha(D)$. Choose a non-zero $x\in D$, as $D$ is a domain, there is a copy of $D$ inside $Dx$. Thus, $\Gdim (D)\leq\Gdim(Dx) \leq\alpha$ for some $\alpha<\lambda$, a contradiction.\\
If $\Gdim ({}_DD)=\alpha+1$ for some ordinal $\alpha$, then we can consider the length function
$$\ell_\alpha\colon\lmod D\to\R_{\geq 0}\cup\{\infty\}\ \,,\ \ \ \ell_\alpha(M)=\ell(\Q_\alpha(M))\, .$$
To conclude one has to show that $\ell_\alpha(D)=1$, that is, $\Q_\alpha(D)$ is a simple object. Since $\C_{\alpha+1}/\C_\alpha$ is semi-Artinian, there is a simple subobject  $S$ of  $\Q_\alpha(D)$. Then $\S_\alpha(S)$ is a sub-module of $\S_\alpha \Q_\alpha(D)$. Identify $\S_\alpha(S)$, $\S_\alpha \Q_\alpha(D)$ and $D$ with submodules of $E(D)$, since $D$ is essential in $E(D)$, there is $0\neq x$ such that $x\in \S_\alpha(S)\cap D$, but then $\S_\alpha(S)$ contains an isomorphic copy of $D$. Thus $\Q_\alpha \S_\alpha(S)=S$ contains an isomorphic copy of $\Q_\alpha(D)$, which is therefore simple.
\end{proof}

We can finally prove our result:
\begin{proof}[Proof of Theorem \ref{dom=oredom}]
(1)$\Rightarrow$(2) is trivial while (2)$\Rightarrow$(1) follows by Proposition \ref{rank=Ore} and the fact that the algebraic $\dim$-entropy is a length function on $\lmod{\K\asterisk G}$ such that $\ent_{\dim}({}_{\K\asterisk G}\K\asterisk G)=1$.

\smallskip\noindent
(2)$\Rightarrow$(3). Consider a short exact sequence $0\to {}_{\K\asterisk G}I\to {}_{\K\asterisk G}\K\asterisk G\to {}_{\K\asterisk G}M\to 0$,
with $I\neq 0$. Choose $0\neq x\in I$, then $\K\asterisk Gx\cong \K\asterisk G$, and so $\ent_{\dim}( {}_{\K\asterisk G}M)=\ent_{\dim}({}_{\K\asterisk G}\K\asterisk G)-\ent_{\dim}({}_{\K\asterisk G}I)\leq 1-1=0$.

\smallskip\noindent
(3)$\Rightarrow$(4). Let us show first that for any finitely generate left $\K\asterisk G$-module ${}_{\K\asterisk G}F$, $\ent_{\dim}({}_{\K\asterisk G}F)\in \N$. In fact, choose a finite set of generators $x_1,\dots,x_n$ for $F$ and, letting $F_0=0$ and $F_i=\K\asterisk Gx_1+\dots+\K\asterisk Gx_i$ for all $i=1,\dots,n$, consider the filtration $0\subseteq F_1 \subseteq F_2\subseteq \dots\subseteq F_n=F$. By additivity, 
$$\ent_{\dim}(F)=\sum_{i=1}^n\ent_{\dim}(F_i/F_{i-1})\, .$$
All the modules $F_1/F_0,\dots,F_n/F_{n-1}$ are cyclics (i.e. quotients of $\K\asterisk G$), thus $\ent_{\dim}(F_i/F_{i-1})\in\{0,1\}$ by hypothesis. Hence, $\ent_{\dim}(F)\in \N$. To conclude one argues by upper continuity that the algebraic $\dim$-entropy of an arbitrary left $\K\asterisk G$-module is the supremum of a subset of $\N$, thus it belongs to $\N\cup\{\infty\}$.

\smallskip\noindent
(4)$\Rightarrow$(2). Let $x\in \K\asterisk G$ and consider the short exact sequence 
$$0\to I \to \K\asterisk G\to \K\asterisk Gx\to 0 $$
where $I=\{y\in \K\asterisk G: yx=0\}$. Suppose that $x$ is a zero-divisor, that is, $I\neq 0$ or, equivalently, $\dim ({}_{\K}I)\neq 0$. By Proposition \ref{ent>>0}, $\ent_{\dim}(I)>0$ and, by our assumption, $\ent_L(I)\geq 1$. Hence, using additivity, $\ent_{\dim}(\K\asterisk Gx)=0$. Again by Proposition \ref{ent>>0}, this implies $\dim(\K\asterisk Gx)=0$ and consequently $\K\asterisk Gx=0$, that is, $x=0$. Thus, the unique zero-divisor in $\K\asterisk G$ is $0$.
\end{proof}

\bibliographystyle{alpha}
\bibliography{refs}

\begin{thebibliography}{{F{\o}}55}

\bibitem[AOP02]{Ara}
Pere Ara, Kevin~C. O'Meara, and Francesc Perera.
\newblock Stable finiteness of group rings in arbitrary characteristic.
\newblock {\em Adv. Math.}, 170(2):224--238, 2002.

\bibitem[Coh06]{Cohn}
P.~M. Cohn.
\newblock {\em Free ideal rings and localization in general rings}, volume~3 of
  {\em New Mathematical Monographs}.
\newblock Cambridge University Press, Cambridge, 2006.

\bibitem[CSC06]{Silberstein}
Tullio Ceccherini-Silberstein and Michel Coornaert.
\newblock The {G}arden of {E}den theorem for linear cellular automata.
\newblock {\em Ergodic Theory Dynam. Systems}, 26(1):53--68, 2006.

\bibitem[CSC10]{Ceccherini_libro}
Tullio Ceccherini-Silberstein and Michel Coornaert.
\newblock {\em Cellular automata and groups}.
\newblock Springer Monographs in Mathematics. Springer-Verlag, Berlin, 2010.

\bibitem[CT13]{CT}
Nhan-Phu Chung and Andreas Thom.
\newblock Some remarks on the entropy for algebraic actions of amenable groups.
\newblock \url{http://arxiv.org/abs/1302.5813}, 2013.

\bibitem[Ele99]{Elek_addition}
G{\'a}bor Elek.
\newblock The {E}uler characteristic of discrete groups and {Y}uzvinskii's
  entropy addition formula.
\newblock {\em Bull. London Math. Soc.}, 31(6):661--664, 1999.

\bibitem[Ele03]{Elek_dual}
G{\'a}bor Elek.
\newblock The rank of finitely generated modules over group algebras.
\newblock {\em Proc. Amer. Math. Soc.}, 131(11):3477--3485 (electronic), 2003.

\bibitem[ES04]{Elek}
G{\'a}bor Elek and Endre Szab{\'o}.
\newblock Sofic groups and direct finiteness.
\newblock {\em J. Algebra}, 280(2):426--434, 2004.

\bibitem[Fac98]{Fa}
Alberto Facchini.
\newblock {\em Module theory}.
\newblock Modern Birkh\"auser Classics. Birkh\"auser/Springer Basel AG, Basel,
  1998.
\newblock Endomorphism rings and direct sum decompositions in some classes of
  modules, [2012 reprint of the 1998 original] [MR1634015].

\bibitem[{F{\o}}55]{Folner}
Erling {F{\o}lner}.
\newblock {On groups with full Banach mean value}.
\newblock {\em {Math. Scand.}}, 3:243--254, 1955.

\bibitem[Gab62]{Gabriel}
Pierre Gabriel.
\newblock Des cat\'egories ab\'eliennes.
\newblock {\em Bull. Soc. Math. France}, 90:323--448, 1962.

\bibitem[GR74]{GR}
Robert Gordon and J.~Chris Robson.
\newblock The {G}abriel dimension of a module.
\newblock {\em J. Algebra}, 29:459--473, 1974.

\bibitem[HYZ11]{HYG}
Wen Huang, Xiangdong Ye, and Guohua Zhang.
\newblock Local entropy theory for a countable discrete amenable group action.
\newblock {\em J. Funct. Anal.}, 261(4):1028--1082, 2011.

\bibitem[Kap95]{Kap}
Irving Kaplansky.
\newblock {\em Fields and rings}.
\newblock Chicago Lectures in Mathematics. University of Chicago Press,
  Chicago, IL, 1995.
\newblock Reprint of the second (1972) edition.

\bibitem[Kri07]{Krieger}
Fabrice Krieger.
\newblock Le lemme d'{O}rnstein-{W}eiss d'apr\`es {G}romov.
\newblock In {\em Dynamics, ergodic theory, and geometry}, volume~54 of {\em
  Math. Sci. Res. Inst. Publ.}, pages 99--111. Cambridge Univ. Press,
  Cambridge, 2007.

\bibitem[LL15]{Li:2015aa}
Hanfeng Li and Bingbing Liang.
\newblock Sofic mean length.
\newblock {\em arXiv: \url{https://arxiv.org/abs/1510.07655}}, 2015.

\bibitem[L{\"u}c02]{Luck}
Wolfgang L{\"u}ck.
\newblock {\em {$L^2$}-invariants: theory and applications to geometry and
  {$K$}-theory}, volume~44 of {\em Ergebnisse der Mathematik und ihrer
  Grenzgebiete. 3. Folge. A Series of Modern Surveys in Mathematics [Results in
  Mathematics and Related Areas. 3rd Series. A Series of Modern Surveys in
  Mathematics]}.
\newblock Springer-Verlag, Berlin, 2002.

\bibitem[MR01]{McRobson}
J.~C. McConnell and J.~C. Robson.
\newblock {\em Noncommutative {N}oetherian rings}, volume~30 of {\em Graduate
  Studies in Mathematics}.
\newblock American Mathematical Society, Providence, RI, revised edition, 2001.
\newblock With the cooperation of L. W. Small.

\bibitem[NR65]{NR}
D.~G. Northcott and M.~Reufel.
\newblock A generalization of the concept of length.
\newblock {\em Quart. J. Math. Oxford Ser. (2)}, 16:297--321, 1965.

\bibitem[Nv87]{Dim_ring_th}
Constantin {N\u{a}st\u{a}sescu} and Freddy {van Oystaeyen}.
\newblock {Dimensions of ring theory}.
\newblock {Mathematics and its Applications, 36. Dordrecht etc.: D. Reidel
  Publishing Company, a member of the Kluwer Academic Publishers Group. XI, 360
  p.;(1987).}, 1987.

\bibitem[OW87]{OW}
Donald~S. Ornstein and Benjamin Weiss.
\newblock Entropy and isomorphism theorems for actions of amenable groups.
\newblock {\em J. Analyse Math.}, 48:1--141, 1987.

\bibitem[Pas85]{passmann}
Donald~S. Passman.
\newblock {\em The algebraic structure of group rings}.
\newblock Robert E. Krieger Publishing Co. Inc., Melbourne, FL, 1985.
\newblock Reprint of the 1977 original.

\bibitem[Pas89]{Passman}
Donald~S. Passman.
\newblock {\em Infinite crossed products}, volume 135 of {\em Pure and Applied
  Mathematics}.
\newblock Academic Press Inc., Boston, MA, 1989.

\bibitem[Pet13]{Pete}
G\'abor Pete.
\newblock Probability and geometry on groups.
\newblock http://www.math.bme.hu/\~gabor/PGG.pdf, May 2013.

\bibitem[Ran06]{Localization}
Andrew Ranicki, editor.
\newblock {\em Non-commutative localization in algebra and topology}, volume
  330 of {\em London Mathematical Society Lecture Note Series}, Cambridge,
  2006. Cambridge University Press.

\bibitem[Ste71]{Ste}
Bo~Stenstr{\"o}m.
\newblock {\em Rings and modules of quotients}.
\newblock Lecture Notes in Mathematics, Vol. 237. Springer-Verlag, Berlin,
  1971.

\bibitem[SVV13]{SVV}
Luigi Salce, Peter V{\'a}mos, and Simone Virili.
\newblock {Length functions, multiplicities and algebraic entropy}.
\newblock {\em Forum Math.}, 25:255--282, 2013.

\bibitem[V{\'a}m68]{V2}
Peter V{\'a}mos.
\newblock {\em Length Functions on Modules}.
\newblock PhD thesis, University of Sheffield, 1968.

\bibitem[WZ92]{WZ}
Thomas Ward and Qing Zhang.
\newblock The {A}bramov-{R}okhlin entropy addition formula for amenable group
  actions.
\newblock {\em Monatsh. Math.}, 114(3-4):317--329, 1992.

\end{thebibliography}

\bigskip
Address of the author:

\medskip

Simone Virili - {\tt virili.simone@gmail.com}

Facultad de matem\'aticas, Universidad de Murcia, Campus Espinardo, 30100 Murcia, Spain.

\end{document}